\documentclass{amsart}
\usepackage{amsfonts, amsbsy, amsmath, amssymb}
\usepackage{tikz}
\usepackage{float}
\usepackage{hyperref}
\usepackage[shortlabels]{enumitem}

\usepackage{mathtools}
\usepackage{breqn}

\usepackage{mmacells}

\newtheorem{thm}{Theorem}[section]
\newtheorem{lem}[thm]{Lemma}

\newtheorem{cor}[thm]{Corollary}
\newtheorem{prop}[thm]{Proposition}
\newtheorem{exmp}[thm]{Example}

\newtheorem{rmk}[thm]{Remark}

\numberwithin{equation}{section}

\theoremstyle{definition}

\allowdisplaybreaks

\newcommand{\f}{\Bbb F}
\newcommand{\A}{\alpha}
\newcommand{\pgl}{\mathrm{PGL}}
\newcommand{\res}{\mathrm{Res}}

\newcommand{\F}{\mathbb{F}}

\newcommand{\aut}{\text{\rm Aut}}
\newcommand{\ch}{\text{\rm char\,}}

\DeclareMathOperator{\trace}{\mathrm{Tr}}

\begin{document}

\title[Classification of Rational Functions of Degree Three]{Classification of Rational Functions of Degree Three over Finite Fields}

\author[X. Hou]{Xiang-dong Hou}
\address{Department of Mathematics and Statistics,
University of South Florida, Tampa, FL 33620}
\email{xhou@usf.edu}

\author[S. Peng]{Siyu Peng}
\address{Department of Mathematics, Massachusetts Institute of Technology, Cambridge, MA 02139}
\email{pengs14@mit.edu}

\author[Y. Qiang]{Yongyu Qiang}
\address{School of Mathematics, Georgia Institute of Technology, Atlanta, GA 30332}
\email{yqiang7@gatech.edu}

\author[S. Zhao]{Shujun Zhao}
\address{Department of Mathematics and Statistics,
University of South Florida, Tampa, FL 33620}
\email{shujunz@usf.edu}

\keywords{equivalence, finite field, projective linear group, ramification point, rational function}

\subjclass[2020]{11T06, 11T30, 14G15}

\begin{abstract}
    We study rational functions over finite fields under PGL-equivalence. We say that $f, g \in \F_q(X)$ are \emph{equivalent} if there exist $\psi, \phi \in \f_q(X)$ of degree one such that $g = \psi \circ f \circ \phi$. Most properties of rational functions over finite fields as they appear in theory and applications are preserved under this equivalence. In a recent work, Mattarei and Pizzato classified rational functions of degree three over finite fields in even characteristic.    
In the present paper, we classify all rational functions of degree three over finite fields in odd characteristic. Our approach is based on careful analyses of the value frequencies and the ramification points of the degree three rational functions. The completion of our classification also relies on an explicit formula for the number of equivalence classes of degree three rational functions over finite fields recently obtained by the first author.  
\end{abstract}

\maketitle

\tableofcontents

\section{Introduction}

Let $\f_q$ denote the finite field with $q$ elements. For a nonconstant rational function $f=g/h\in\f_q(X)\setminus\f_q$, where $g,h\in\f_q[X]$ and $\gcd(g,h)=1$, its degree is $\deg f=\max\{\deg g,\deg h\}=[\f_q(X):\f_q(f)]$. We treat the projective linear group $\pgl(2,\f_q)$ as $\{\phi\in\f_q(X):\deg\phi=1\}$ equipped with composition.
Elements of $\pgl(2,\f_q)$ are of the form $(aX+b)/(cX+d)$, where $a,b,c,d\in\f_q$, $ad-bc\ne 0$. Two nonconstant rational functions $f,g\in\f_q(X)\setminus\f_q$ are said to be {\em PGL-equivalent}, or simply, {\em equivalent}, denoted as $f\sim g$, if $g=\psi\circ f\circ \phi$ for some $\psi,\phi\in \pgl(2,\f_q)$. It is easy to see that $f\sim g$ if and only if an $\f_q$-automorphism of $\f_q(X)$ restricts to an isomorphism $\f_q(f)\to\f_q(g)$; if $g=\psi\circ f\circ \phi$, $\psi,\phi\in\pgl(2,\f_q)$, the $\f_q$-automorphism of $\f_q(X)$ is given by $X\mapsto\phi(X)$. Therefore, for the two elements $\psi,\phi\in\pgl(2,\f_q)$ that define the equivalence $f\sim g$, $\phi$ is essential and $\psi$ is passive, that is, given $g$ and $f\circ\phi$, it is easy to determine the unique $\psi\in\pgl(2,\f_q)$, if it exists, such that $g=\psi\circ f\circ\phi$.

The properties of a rational function $f\in\f_q(X)\setminus\f_q$ are very much encoded in the attributes of the function field extension $\f_q(X)/\f_q(f)$, and vice versa. Qualities and quantities pertaining to the function field extension $\f_q(X)/\f_q(f)$ are preserved under the PGL action on $f$. For example, the number of ramification points and their types in this extension are PGL-invariants of the rational function $f$.  

The (two-sided) action by $\pgl(2,\f_q)$ on $\f_q(X)\setminus\f_q$ is quite natural, and the PGL classification of rational functions over $\f_q$ is a fundamental question. However, for a long time, this question has not received much attention. Historically, the study of finite fields put a lot emphasis on polynomials and the interest in rational functions was limited. The situation began to change recently when people realized the important role played by rational functions in the theory and applications of finite fields. It was discovered that one can use rational functions to construct high degree irreducible polynomials over $\f_q$. This construction was explored in a series works by several authors \cite{Ahmadi-FFA-2011, Mattarei-Pizzato-FFA-2017, Mattarei-Pizzato-FFA-2022, Panario-Reis-Wang-JPAA-2020, Reis-JPAA-2020, Reis-FFA-2020, Stichtenoth-Topuzoglu-FFA-2012}. For equivalent rational functions, the constructions are essentially the same. Another active topic in finite fields is permutation polynomials which are polynomials that permute the finite field. Such polynomials have extensive applications in combinatorics and cryptography. Rational functions in $\f_q(X)$ induce functions from the projective line $\Bbb P^1(\f_q)=\f_q\cup\{\infty\}$ to itself. Rational functions that permute the projective line not only generalize permutation polynomials over finite fields but also facilitate new constructions of permutation polynomials. Permutation rational functions of $\Bbb P^1(\f_q)$ come in equivalence classes, and such equivalence classes have been determined for permutation rational functions of degree $\le 4$.
There is a rapidly growing volume of literature on permutation polynomials and permutation rational functions over finite fields; we list a few works \cite{Ding-Zieve-AA-2022, Ferraguti-Micheli-DCC-2020, Hou-FFA-2015a, Hou-CA-2021, Sze-Thesis-2023, Wang-2019}, and more references can be found therein. 

All degree one rational functions over $\f_q$ are equivalent. The classification of degree two rational functions over $\f_q$ is also trivial \cite{Hou-AA-2025}: there are precisely two equivalence classes which are represented by $X^2$ and $X^2+X$ for even $q$ and by $X^2$ and $(X^2+b)/X$, where $b$ is a nonsquare of $\f_q$, for odd $q$. The case of degree three is difficult. Very recently, Mattarei and Pizzato \cite{Mattarei-Pizzato-JAA-2024} classified degree three rational functions over $\f_q$ for even $q$ with an approach based on analysis of ramification points. For odd $q$, they gave an upper bound for the number of equivalence classes of degree three rational function. Let $\frak N(q,n)$ denote the number of equivalence classes of rational functions of degree $n$ over $\f_q$. The results of \cite{Mattarei-Pizzato-JAA-2024} give that  
\[
\frak N(q,3)
\begin{cases}
=2(q+1)&\text{if $q$ is even and a square},\cr
=2q&\text{if $q$ is even and a nonsquare},\cr
\le 4q&\text{if $q$ is odd}.
\end{cases}
\]
An explicit formula of $\frak N(q,n)$ for all $q$ and $n$ has been found in \cite{Hou-AA-2025}. For $n=3$, we have
\begin{equation}\label{N3}
\frak N(q,3)=
\begin{cases}
2(q+1)&\text{if}\ q\equiv 1,4\pmod 6,\cr
2q&\text{if}\ q\equiv 2,5\pmod 6,\cr
2q+1&\text{if}\ q\equiv 3\pmod 6.
\end{cases}
\end{equation}

The main objective of the present paper is to provide a complete classification of degree three rational functions over $\f_q$ with odd $q$. Our strategy is to find a collection of $\frak N(q,3)$ degree 3 rational functions  which are pairwise nonequivalent. Here is a brief description of our approach. We divide degree 3 rational functions $f\in\f_q(X)$ into three classes, viewing them as functions from $\Bbb P^1(\f_q)$ to itself.
\begin{itemize}
\item[(I)] $|f^{-1}(\alpha)|=1$ for all $\alpha\in\Bbb P^1(\f_q)$.

\medskip
\item[(II)] $|f^{-1}(\alpha)|=2$ for some $\alpha\in\Bbb P^1(\f_q)$.

\medskip
\item[(III)] The remaining case: $|f^{-1}(\alpha)|=3$ for some $\alpha\in\Bbb P^{-1}(\f_q)$, but $|f^{-1}(\beta)|\ne 2$ for all $\beta\in\Bbb P^1(\f_q)$.
\end{itemize}
Clearly, functions in different classes are nonequivalent.

Classification for Class I is known \cite{Ferraguti-Micheli-DCC-2020}. For functions in Class II, we are able to bring them to canonical forms and show that the canonical forms are pairwise nonequivalent. Let $\frak N_{\rm I}(q,3)$, $\frak N_{\rm II}(q,3)$ and $\frak N_{\rm III}(q,3)$ denote the number of equivalence classes in Classes I, II, and III, respectively. We know that 
\begin{equation}\label{n1}
\frak N_{\rm I}(q,3)=
\begin{cases}
1&\text{if}\ q\not\equiv 3\pmod 6,\cr
2&\text{if}\ q\equiv 3\pmod 6,
\end{cases}
\end{equation}
and 
\begin{equation}\label{n2}
\frak N_{\rm II}(q,3)=
\begin{cases}
q+2&\text{if}\ q\equiv 1\pmod 3,\cr
q&\text{if}\ q\not\equiv 1\pmod 3.
\end{cases}
\end{equation}
Combining \eqref{N3} with \eqref{n1} and \eqref{n2} gives
\begin{equation}\label{n3}
\frak N_{\rm III}(q,3)=q-1.
\end{equation}
To classify functions in Class III, we first assume that $p:=\ch\f_q\ge 5$. We find a set of functions in Class III and show that the functions in the set are pairwise nonequivalent and that the set has cardinality $\ge q-1$. This turns out to be quite difficult. To establish pairwise nonequivalence, we construct some variants based the types of the ramification points of the functions. The difficulty that we encounter here is the fact that degree 3 rational functions over $\f_q$ in odd characteristic can have up to four ramification points while in even characteristic, they can have at most two ramification points. To prove that the set of functions has cardinality $\ge q-1$, we use the Hasse-Weil bound on the number of rational points on absolutely irreducible algebraic curves over finite fields. For $p=3$, the same approach with a few modifications also produces the classification for Class III.

The paper is organized as follows: In Section~2, we recall the known classification for Class I. In Section~3, we state and prove the classification for Class II. In preparation for the classification for Class III, two unrelated topics, ramification points of rational functions over finite fields and the cross ratio, are discussed in Sections~4 and 5 separately. Section~6 is devoted to the classification for Class III with $p\ge 5$. Here, Class III is divided into several subclasses according to the types of the ramification points of the rational functions. Class III with $p=3$ is resolved in Section~7. We construct several few invariants in Class III which allow us to separate at least $q-1$ equivalence classes. Since $\frak M_{\text{III}}(q,3)=q-1$, we conclude, a priori, that these invariants completely determine the equivalence classes in Class III. One wonders if it is possible to prove this conclusion directly, i.e., without using the knowledge of $\frak N_{\text{III}}(q,3)$. The answer is ``almost'', and explore this alternative in Section~8. As concrete examples, the classification of degree 3 rational functions over $\f_{3^3}$ and over $\f_{5^2}$ are exhibited in Section~9. Our approach also works for the classification of degree 3 rational functions over finite fields in even characteristic. Appendix ~A provides a comparison between the result in \cite{Mattarei-Pizzato-JAA-2024} and ours. Appendix~B contains some simple Mathematica codes for readers to verify the computational results in the paper.

The notion of equivalence extends to nonconstant rational functions over an arbitrary field $\f$. An element of $\pgl(2,\f)$ is represented either by an invertible matrix $\left[\begin{smallmatrix}a&b\cr c&d\end{smallmatrix}\right]$ or by a degree one rational function $(aX+bY)/(cX+d)$. For $f(X)=P(X)/Q(X)\in\f(X)\setminus\f$, where $P,Q\in\f[X]$, $\gcd(P,Q)=1$, define
\[
\mathcal S(f)=\langle P,Q\rangle_{\f},
\]
where $\langle \ \rangle_{\f}$ denotes the $\f$-span. For $f,g\in\f(X)\setminus\f$, it is clear that $f\sim g$ if and only if $\mathcal S(g)=\mathcal S(f\circ \phi)$ for some $\phi\in\pgl(2,\f)$. In fact, $\mathcal S(g)=\mathcal S(f\circ \phi)$ means that $g=\psi\circ f\circ \phi$ for some $\psi\in\pgl(2,\f)$. Note that when $\phi$ is given, $\psi$ is unique since $f\circ\phi$ is transcendental over $\f$.


\section{Class I}

Degree 3 rational functions over $\f_q$ that permute the projective line $\Bbb P^1(\f_q)$ were classified by Ferraguti and Micheli in \cite{Ferraguti-Micheli-DCC-2020} using the Chebotarev theorem for function fields. The result is shown in Table~\ref{tb1}, where  the equivalence classes do not depend on the choices of $a,b,a_1,a_2,b_0$. For a different approach to the question, see \cite{Hou-CC-2021}.

\begin{table}[h]
\caption{Classification of degree 3 rational functions over $\f_q$ that permute $\Bbb P^1(\f_q)$}\label{tb1}
   \renewcommand*{\arraystretch}{1.4}
    \centering
     \begin{tabular}{ccl}
         \hline
         $q$ & no. equiv. cls. & \hfil representatives\\ \hline
         $q\equiv 1\pmod 6$ & 1 & $\displaystyle\frac{X^3+aX}{bX^2+1},\,ab=9, -b$ is
         a nonsquare\\
         $q\equiv 2,5\pmod 6$ & 1 & $X^3$\\
         $q\equiv 3\pmod 6$ & 2 & $X^3, X^3+aX,\, -a$ is a nonsquare\\
         $q\equiv 4\pmod 6$ & 1 & $\displaystyle\frac{X^3+a_2X^2+a_1X}{X^2+X+b_0}$,\\
         && $\text{Tr}_{q/2}(b_0)=1, a_1=b_0+b_0^{-1}, a_2=1+b_0^{-1}$\\
         \hline
     \end{tabular}
\end{table}


\section{Class II}

Let $f(X)$ lie in Class II. Taking a suitable composition with functions in $\pgl(2,\f_q)$, we may assume that $f^{-1}(0) = \{0, 1\}$.
Then $f(X)=P(X)/Q(X)$, where $P,Q\in \f_q[X]$, $\gcd(P,Q)=1$ and $P(0)=P(1)=0$. If $P=X(X-1)$, then $\deg Q=3$ and hence $f(\infty)=0$, which is a contradiction. So $P=X^2(X-1)$ or $X(X-1)^2$; we may assume that $P=X^2(X-1)$. Then
\[
f(X)\sim f(X^{-1})=\frac{X^{-2}(X^{-1}-1)}{Q(X^{-1})}=\frac{1-X}{X^3Q(X^{-1})}\sim\frac{X^3Q(X^{-1})}{1-X}\sim\frac{X^3+sX^2+t}X,
\]
where $s\in\f_q$ and $t\in\f_q^*$.

If $s =0$, then for any $a \in \f_q^*$, 
\[
f\sim \frac{X^3+t}X\sim \frac{(aX)^3+t}X\sim\frac{X^3+a^{-3}t}X.
\]
Let $\mathcal C$ be a system of representatives of the cosets of $(\f_q^*)^{(3)}=\{x^3:x\in\f_q^*\}$ in $\f_q^*$. Then in the above, we may assume $t\in\mathcal C$.  

If $s\ne 0$, 
\[
f\sim f(sX)\sim\frac{(sX)^3+s(sX)^2+t}X\sim\frac{X^3+X^2+t/s^3}X.
\]
We may then replace $t/s^3$ with $t$. Thus, we know that all functions in Class II are equivalent to one of the following forms: 
\[
\frac{X^3 + t}{X},\  t\in\mathcal C, \quad  \frac{X^3 + X^2 + t}{X},\ t\in\f_q^*.
\]
In fact, these canonical forms are pairwise inequivalent and thus classifies functions in Class II. 

\begin{thm}\label{Theorem 1}
In Class II, the equivalence classes are represented by
\[
\frac{X^3+t}X,\ t\in\mathcal C\quad \text{and}\quad \frac{X^3+X^2+t}X,\ t\in\f_q^*.
\]
The number of equivalence classes in Class II is
\[
\begin{cases}
q+2&\text{if}\ q\equiv 1\pmod 3,\cr
q&\text{if}\ q\not\equiv 1\pmod 3.
\end{cases}
\]
\end{thm}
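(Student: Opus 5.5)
The reduction preceding the statement already shows that every function in Class II is equivalent to one of the listed forms. Since $|\mathcal C|=3$ when $q\equiv1\pmod 3$ and $|\mathcal C|=1$ otherwise, the count then follows from $|\mathcal C|+(q-1)$. So the whole proof is to show that the listed forms are pairwise inequivalent, and I would do this with a marked-fiber argument.

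\emph{Marked pairs.} For $f$ in Class II, call $(a,b)\in\Bbb P^1(\f_q)^2$ with $a\ne b$ a \emph{marked pair} if, for some $\alpha\in\Bbb P^1(\f_q)$, the divisor of $f^{-1}(\alpha)$ is $2a+b$. Since $q$ is odd, $a$ is a tame ramification point of index $2$, and $b$ is automatically rational. If $g=\psi\circ f\circ\phi$, then $\phi^{-1}$ carries the marked pairs of $f$ bijectively onto those of $g$.

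For a canonical form $f=(X^3+sX^2+t)/X=X^2+sX+t/X$, the fiber over $\infty$ is $2\infty+0$, so $(\infty,0)$ is a marked pair. The other finite ramification points are the roots of
\[
X^2f'(X)=2X^3+sX^2-t .
\]
A root $r$ of this cubic gives a marked pair $(r,b_r)$ exactly when $r\in\f_q$ and $r$ is not a triple point. Here $b_r$ is found by dividing $X^3+sX^2+t-f(r)X$ by $(X-r)^2$, which gives $b_r=-t/r^2$.

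\emph{Equivalences fixing $(\infty,0)$.} Suppose an equivalence $g=\psi\circ f\circ\phi$ between two canonical forms has $\phi$ sending the pair $(\infty,0)$ of $g$ to the pair $(\infty,0)$ of $f$. Then $\phi=cX$ with $c\in\f_q^*$. Comparing spans,
\[
\mathcal S(g)=\langle X^3+s'X^2+t',\,X\rangle_{\f_q}=\langle c^3X^3+sc^2X^2+t,\,X\rangle_{\f_q}=\mathcal S(f\circ\phi),
\]
forces $s'=s/c$ and $t'=t/c^3$. Hence $s=0$ if and only if $s'=0$, and in that case $t,t'$ lie in the same coset of $(\f_q^*)^{(3)}$. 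If $s=s'=1$, then $c=1$ and $t=t'$. This settles the case where $\phi$ respects the distinguished marked pair.

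\emph{The remaining case (the main obstacle).} In general $\phi$ may send $(\infty,0)$ to another marked pair $(r,b_r)$ of $f$. To handle this I would compose $f$ with the Möbius map $\phi_r$ sending $\infty\mapsto r$ and $0\mapsto b_r$, chosen so that the third ramification structure is placed conveniently. Then I would renormalize $f\circ\phi_r$ to canonical form exactly as in the reduction above, which yields explicit rational functions $s'(r)$ and $t'(r)$ of $r,s,t$.

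What must then be shown is that the canonical form produced from $(r,b_r)$ is in the same class as $f$: it has $s'(r)=0$ exactly when $s=0$ with the same cube class of $t$, and otherwise reduces to the same $t$. The check would use the relation $2r^3+sr^2=t$, and I would hope for an identity such as $t'(r)/s'(r)^3=t/s^3$. If that identity fails in general, I would instead look for an invariant of $f$ that does not depend on the chosen marked pair. The candidates are the cross ratio of the ramification points, or the discriminant of $2X^3+sX^2-t$ taken up to cubes. I would show this invariant separates the listed forms and then reduce to the $(\infty,0)$-fixing case above. I expect this computation over all rational roots $r$, with care when the cubic has repeated roots or when some fiber is totally ramified, to be the hardest step.
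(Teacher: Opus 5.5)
Your use of the reduction, the count, and the case where $\phi$ fixes the marked pair $(\infty,0)$ are all fine. However, the proposal stops exactly where the content of the theorem lies. Only the equivalences that send $(\infty,0)$ to another marked pair $(r,b_r)$ could identify two listed forms nontrivially. These are the cases treated in Case~2 of Proposition~\ref{Proposition 1.1}, Case~3 of Proposition~\ref{Proposition 1.2} and the case $ac\ne0$ of Proposition~\ref{Proposition 1.3}. You leave them to an identity you only hope for, plus fallbacks that are not worked out. One fallback cannot succeed. All $(X^3+t)/X$, $t\in\f_q^*$, are equivalent over $\overline\f_q$, so the cross ratio of the ramification points, or any invariant of that configuration over $\overline\f_q$, is the same for every $t$ and cannot detect the cube class of $t$. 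As written, pairwise inequivalence is not established.

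The missing step is a short computation inside your own framework. Let $r\in\f_q$ be a root of $2X^3+sX^2-t$ with $3r+s\ne0$. Then $t=r^2(2r+s)$, so $r\ne0$, $b_r=-(2r+s)$, and $f-f(r)=(X-r)^2(X-b_r)/X$. Taking $\phi_r=(rX+b_r)/(X+1)$ gives
\[
\frac{1}{f\circ\phi_r-f(r)}=\frac{(rX+b_r)(X+1)^2}{(r-b_r)^3X},
\]
so $\mathcal S(f\circ\phi_r)=\langle X^3+s'X^2+t',\,X\rangle_{\f_q}$ with $s'=-s/r$ and $t'=-(2r+s)/r$. Hence $s'^3/t'=s^3/t$. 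If $s=0$, then $t'=-2$ and $t=2r^3$, so $t'/t$ is a cube.

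Now write $g=(X^3+s_gX^2+t_g)/X$. Any $\phi$ sending $(\infty,0)$ to $(r,b_r)$ equals $\phi_r\circ(cX)$ for some $c\in\f_q^*$. Applying your first case to $(X^3+s'X^2+t')/X$ and $g$ gives $s_g^3/t_g=s^3/t$, and, when $s=0$, $t_g\in t(\f_q^*)^{(3)}$. This separates all the listed forms.

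With this step supplied, your argument is a genuinely different and shorter route than the paper's. The paper expands $f\circ\phi$ for a general $\phi=(aX+b)/(cX+d)$ and runs a case analysis on the coefficient conditions. The quantity $s^3/t$ you end up with is exactly the invariant $\theta$ that the paper introduces later.

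Finally, nothing requires $q$ odd, and the theorem is stated for all $q$. The computation above is characteristic-free, and the rational marked pairs are still $(\infty,0)$ and the $(r,b_r)$ with $r$ a rational zero of $X^2f'$ satisfying $3r+s\ne0$.
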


\medskip
The proof of Theorem~\ref{Theorem 1} is tedious. We divide the theorem into three propositions and prove them separately.

\begin{prop}\label{Proposition 1.1}
Let $t,t'\in\mathcal C$ be such that
\[
\frac{X^3+t}X\sim\frac{X^3+t'}X.
\]
Then $t=t'$.
\end{prop}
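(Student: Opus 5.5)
The plan is to work with the invariant $\mathcal S$ from the introduction, written homogeneously. Put $f_t=(X^3+t)/X$. Then $\mathcal S(f_t)$ is the pencil of binary cubic forms
\[
V_t=\langle X^3+tY^3,\ XY^2\rangle_{\f_q}.
\]
The relation $f_t\sim f_{t'}$ means $V_{t'}=V_t\circ\phi$ for some $\phi\in\pgl(2,\f_q)$, where $\phi$ acts on forms by an invertible linear substitution of $(X,Y)$. Such a substitution sends a form with a repeated $\f_q$-rational linear factor to a form of the same kind. So the first step is to determine, for each $t$, the members of $V_t$ (up to scalars) that have a repeated linear factor over $\f_q$.

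A member with $\lambda=0$ is, up to scalar, $XY^2$. A member with $\lambda\ne0$ is $\lambda(X^3+mXY^2+tY^3)$ with $m=\mu/\lambda$. It has a repeated factor exactly when the discriminant $-4m^3-27t^2$ vanishes, i.e. when
\[
m^3=-\tfrac{27}{4}t^2 .
\]
For $p\ge5$ there is an $m\in\f_q$ with this property if and only if $t^2/4$ is a cube in $\f_q^*$. Since $t^2/4=(t/2)^2$ and $t/2=(t/2)^3\cdot(t/2)^{-2}$, this happens if and only if $t/2$ is a cube.

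The small characteristics are dispatched first.
\begin{itemize}
\item If $p=3$, cubing is a bijection of $\f_q$, so $(\f_q^*)^{(3)}=\f_q^*$ and $\mathcal C$ is a single element; the statement is trivial.
\item If $p=2$ (included for completeness), the discriminant equals $-27t^2\ne0$, so the only degenerate member is $XY^2$. The argument below then applies with no swapped case.
\end{itemize}

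The main step is the following. Under the substitution $\phi$, with $(X,Y)\mapsto(aX+bY,\,cX+dY)$, the form $XY^2\in V_t$ is sent to $(aX+bY)(cX+dY)^2\in V_{t'}$, and this must be a degenerate member of $V_{t'}$. There are two cases.

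\emph{Case 1: it is a scalar multiple of $XY^2$.} Matching the double factor and the simple factor gives $c=0$ and $b=0$. So $\phi$ is $X\mapsto aX,\ Y\mapsto dY$. Then $X^3+tY^3$ is sent to $a^3X^3+td^3Y^3$, and this lies in $V_{t'}$ only if $td^3/a^3=t'$. Hence $t/t'$ is a cube, and $t=t'$ because both lie in $\mathcal C$.

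\emph{Case 2: it is the other degenerate member of $V_{t'}$.} This member exists only if $t'/2$ is a cube. Applying the same reasoning to $\phi^{-1}$ and the member $XY^2\in V_{t'}$, either we are in Case 1 for $\phi^{-1}$ (which gives $t=t'$), or $V_t$ also has a second degenerate member, so $t/2$ is a cube. In the latter situation $t/t'=(t/2)/(t'/2)$ is a cube, and again $t=t'$.

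I expect the only real obstacle to be bookkeeping. One must check that a rational member with a repeated factor really has the shape $L M^2$ with $L,M$ defined over $\f_q$; this holds because a repeated root of a cubic over $\f_q$ is $\f_q$-rational. One must also check that the linear substitution preserves this shape. The discriminant computation for $X^3+mX+t$ is routine. The key observation is that the ``swapped'' Case 2 does not need to be excluded: it forces $t/2$ and $t'/2$ both to be cubes, and that already gives the conclusion.
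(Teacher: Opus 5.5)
Your proof is correct, but it takes a different route from the paper's.

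\textbf{The paper's route.} The paper works by brute force. It expands $\frac{X^3+t}{X}\circ\phi$ for a general $\phi=(aX+b)/(cX+d)$ and imposes the polynomial conditions $A_2=B_2=f_0=f_2=0$ that express membership in $\langle X^3+t',X\rangle$. It then splits on $c=0$ versus $c\neq0$. In the second case it solves explicitly, obtaining $bc+2ad=0$, $t=-b^3/(4d^3)$ and $t'=b^3/(4a^3)$, hence $t'=(-d/a)^3t$. The case $p=2$ is eliminated inside the same computation via $f_2=c^4t\neq0$.

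\textbf{Your route.} You replace this with a structural invariant: the members of the pencil having a repeated linear factor. These correspond exactly to the $\f_q$-rational ramification points of $f_t$, namely $\infty$ and the cube roots of $t/2$ lying in $\f_q$, since $f_t'=(2X^3-t)/X^2$. This viewpoint explains the paper's $c\neq0$ case conceptually. There one has $t/2=(-b/(2d))^3$ and $t'/2=(b/(2a))^3$, both cubes, exactly as your Case~2 predicts. It also lets you reach $t=t'$ without ever solving for $\phi$.

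\textbf{What each buys.} The paper's computation is uniform in the characteristic and describes every $\phi$ realizing an equivalence. The same template is reused for the next two propositions, where a clean invariant of your type is less obvious. Yours is shorter and fits the ramification-point philosophy of Section~\ref{sec4} and Lemma~\ref{lem:2/1}. Its cost is small separate checks for $p=2,3$, which you handle correctly: for $p=3$ one has $|\mathcal C|=1$, and for $p=2$ the discriminant $t^2\neq0$ leaves $XY^2$ as the only degenerate member.

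\textbf{Two minor remarks.}
\begin{enumerate}[(1)]
\item When $q\equiv1\pmod 3$ and $t/2$ is a cube, there are three degenerate members besides $XY^2$, one for each cube root $m$, not ``the other'' one. Your argument never uses uniqueness, so this is harmless.
\item You do not need the $\f_q$-rationality of $L$ and $M$. It suffices that an invertible linear substitution preserves having a repeated factor over $\overline{\f_q}$, i.e.\ vanishing discriminant; the coefficient $m$ is automatically in $\f_q$ because the member lies in $V_{t'}$.
\end{enumerate}
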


\begin{proof}
Let $\phi=(aX+b)/(cX+d)\in\pgl(2,\f_q)$ be such that
\begin{equation}\label{P3.2eqS}
\mathcal S\Bigl(\frac{X^3+t}X\circ\phi\Bigr)=\mathcal S\Bigl(\frac{X^3+t'}X\Bigr)=\langle X^3+t',\, X\rangle_{\f_q}.
\end{equation}
Write
\[
\frac{X^3+t}X\circ\phi=\frac{A_3X^3+A_2X^2+A_1X+A_0}{B_3X^3+B_2X^2+B_1X+B_0},
\]
where
\begin{equation}\label{P3.2A}
\begin{cases}
A_0=b^3 + d^3 t,\cr
A_1=3 (a b^2+c d^2 t),\cr
A_2=3 (a^2 b + c^2 d t)\cr
A_3=a^3+c^3 t.
\end{cases}
\end{equation}
\begin{equation}\label{P3.2B}
\begin{cases}
B_0=b d^2,\cr
B_1=d (2 b c + a d),\cr
B_2=c (b c+2 a d),\cr
B_3=a c^2,
\end{cases}
\end{equation}
and $(A_3,B_3)\ne(0,0)$. We have
\[
B_3(A_3X^3+\cdots+A_0)-A_3(B_3X^3+\cdots+B_0)=(ad-bc)(f_0+f_1X+f_2X^2),
\]
where
\begin{equation}\label{P3.2f0-2}
\begin{cases}
f_0=-a b^2 c-a^2 b d+c^2 d^2 t,\cr
f_1=-3 a^2 b c - a^3 d + 2 c^3 d t,\cr
f_2=c (-2 a^3 + c^3 s).
\end{cases}
\end{equation}
It follows from \eqref{P3.2eqS} that
\begin{equation}\label{P1.1-1}
\begin{cases}
A_2=0,\cr
B_2=0,\cr
f_0=0,\cr
f_2=0.
\end{cases}
\end{equation}
Moreover,
\begin{equation}\label{P1.1-2}
t'=\begin{cases}
\displaystyle\frac{B_0}{B_3}=\displaystyle\frac{bd^2}{ac^2}&\text{if}\ ac\ne0, \vspace{0.6em}\cr
\displaystyle\frac{A_0}{A_3}=\displaystyle\frac{b^3+d^3t}{a^3+c^3t}&\text{if}\ ac=0.
\end{cases}
\end{equation}
Note that $B_2=0$ gives
\begin{equation}\label{B2=0}
c(bc+2ad)=0.
\end{equation}

\medskip
{\bf Case 1.} Assume $c=0$. Then $ad\ne 0$ and $f_0=-a^2bd$, whence $b=0$. It follows that 
\[
t'=\Bigl(\frac da\Bigr)^3t,
\]
whence $t'=t$.

\medskip
{\bf Case 2.} 
Assume $c\ne 0$. By \eqref{B2=0}, we have $bc+2ad=0$. 
If $p=2$, then $f_2=c^4t\ne 0$, which is a contradiction.
Assume $p\ge 3$. Then $b\ne 0$ (otherwise, $ad=0$ and hence $ad-bc=0$). Thus $c=-2ad/b$. It follows that 
\[
f_0=\frac{a^2 d \left(b^3+4 d^3 t\right)}{b^2}.
\]
Therefore, $t=-b^3/4d^3$. We also have
\[
t'=\frac{bd^2}{ac^2}=\frac{bd^2}a\cdot\Bigl(\frac {-b}{2ad}\Bigr)^2=\frac{b^3}{4a^3}.
\]
Therefore, $t'=(-d/a)^3t$, whence $t'=t$.
\end{proof}

\begin{prop}\label{Proposition 1.2}
Let $t,t'\in\f_q^*$ be such that 
\[
\frac{X^3+X^2+t}X\sim\frac{X^3+X^2+t'}X.
\]
Then $t=t'$.
\end{prop}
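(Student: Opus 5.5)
The plan is to follow the proof of Proposition~\ref{Proposition 1.1}: translate the equivalence into a condition on spans and solve the resulting polynomial equations in the entries of $\phi$. Suppose $\phi=(aX+b)/(cX+d)\in\pgl(2,\f_q)$ satisfies
\[
\mathcal S\Bigl(\frac{X^3+X^2+t}X\circ\phi\Bigr)=\langle X^3+X^2+t',\,X\rangle_{\f_q}.
\]
Write the composition as $(A_3X^3+\cdots+A_0)/(B_3X^3+\cdots+B_0)$. Here
\[
A(X)=(aX+b)^3+(aX+b)^2(cX+d)+t(cX+d)^3,\qquad B(X)=(aX+b)(cX+d)^2,
\]
so the $B_i$ are exactly as in \eqref{P3.2B}. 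The key observation is how to describe the span. A cubic $C_3X^3+C_2X^2+C_1X+C_0$ lies in $\langle X^3+X^2+t',X\rangle_{\f_q}$ if and only if
\[
C_3=C_2\qquad\text{and}\qquad C_0=t'C_3 .
\]
I would impose these two linear conditions on both $A$ and $B$. This gives four equations:
\[
B_3=B_2,\qquad B_0=t'B_3,\qquad A_3=A_2,\qquad A_0=t'A_3 .
\]
This is a cleaner system than the one in Proposition~\ref{Proposition 1.1}, since no auxiliary $f_i$ are needed.

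\textbf{Case $c=0$.} Then $ad\ne0$ and $B=ad^2X+bd^2$, so $B_3=B_2=0$. The condition $B_0=t'B_3=0$ forces $b=0$. Then
\[
A=a^3X^3+a^2dX^2+td^3 .
\]
The condition $A_3=A_2$ gives $a=d$, and then $A_0=t'A_3$ reads $td^3=t'a^3$, so $t'=t$.

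\textbf{Case $c\ne0$.} This is the main obstacle, and I expect to show it is impossible. From \eqref{P3.2B}, $B_3=B_2$ reads $ac^2=c(bc+2ad)$, i.e. $ac=bc+2ad$, and $B_0=t'B_3$ reads $bd^2=t'ac^2$.

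First I would rule out $a=0$. Then $bc=0$, hence $b=0$, which contradicts $ad-bc\ne0$.

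So $a\ne0$, and I would normalize $a=1$ by scaling the matrix of $\phi$. Then I would split further.
\begin{itemize}
\item If $b=0$, then $t'ac^2=0$ forces $t'=0$, a contradiction since $t'\in\f_q^*$.
\item If $d=0$, then $ac=bc$ gives $a=b$, hence $ad-bc=-bc=-ac$; so $d=0$ alone is not immediately contradictory. I would substitute $d=0$, $b=a$ into the $A$-equations. Here $A=a^2(X+1)^3+a^2cX(X+1)^2+tc^3X^3$, so $A_3=a^3... $ up to the normalization $a=1$: $A_3=1+c+tc^3$, $A_2=3+2c$, $A_0=1$. Then $A_3=A_2$ and $A_0=t'A_3$ together with $t'=bd^2/(ac^2)=0$ give $1=0$, a contradiction.
\item If $bd\ne0$, I would use $c=2ad/(a-b)$ (when $a\ne b$; the case $a=b$ gives $ad=0$, already handled) to express $c$ and then $t'$ in terms of $b,d$. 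Substituting into $A_3=A_2$ and $A_0=t'A_3$ leaves two polynomial equations in $b,d,t$. I would eliminate $t$ (it appears linearly in $A_3$ and $A_0$, and not at all in $A_2$) to get a single relation in $b,d$. The goal is to show this relation forces $ad-bc=0$ or $bd=0$.
\end{itemize}

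If instead the elimination leaves a genuine solution, the conclusion must still be $t'=t$ after simplification. In that situation I would check this via the invariant structure: $f^{-1}(\infty)$ consists of the simple pole $0$ and the double pole $\infty$, and $\phi$ must map the corresponding points of the two functions to each other. This strongly restricts $\phi$; in particular it should force $c=0$ after accounting for the $\psi$ side. Characteristic $3$ may need a separate check, because the binomial coefficients $3$ in the expansion of $A$ vanish there.
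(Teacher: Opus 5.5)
Your reduction to the four linear conditions $B_3=B_2$, $B_0=t'B_3$, $A_3=A_2$, $A_0=t'A_3$ is correct, and slightly cleaner than the paper's use of $f_0,f_2$. Your case $c=0$ matches the paper's Case~2. The gap is the case $c\ne0$: it is \emph{not} impossible. So the relation you hope to extract in the subcase $bd\ne0$, forcing $ad-bc=0$ or $bd=0$, does not exist.

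Here is a counterexample for odd $q$. Take $t=-1$ and $\phi=(1-X)/(1+X)$, i.e.\ $(a,b,c,d)=(-1,1,1,1)$, or $(1,-1,-1,-1)$ after your normalization. Then $bd\ne0$ and $ad-bc=-2\ne0$. The composition $f\circ\phi$ has numerator $-X^3-X^2-7X+1$ and denominator $-X^3-X^2+X+1$. Both satisfy $C_3=C_2$ and $C_0=t'C_3$ with $t'=-1$, and their difference $-8X$ is nonzero. So this $\phi$ realizes a self-equivalence with $c\ne0$.

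Your fallback argument fails for the same reason: $\psi$ need not fix $\infty$. Here $f-1=(X-1)(X+1)^2/X$, so the fibre over $1$ also consists of a double and a simple point. The map $\phi$ sends $\infty\mapsto-1$ and $0\mapsto1$, carrying the fibre $\{0,\infty\}$ of the target function onto this other fibre of the source function. Nothing forces $c=0$.

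What is missing is to actually solve the system when $ac\ne0$ and show that its solutions, which do exist, satisfy $t=t'$. The paper does this as follows.
\begin{itemize}
\item Since $B_3=ac^2\ne0$ and $B$ is in the span, the conditions on $A$ are equivalent to $f_2=c(-2a^3-a^2c+c^3t)=0$ and $f_0=-ab^2c-a^2bd-abcd+c^2d^2t=0$.
\item One has $\mathrm{Res}(f_0,f_2;t)=ac^3(bc-ad)(bc+2ad+cd)$, so $bc+2ad+cd=0$.
\item Combining this with $B_3=B_2$, i.e.\ $ac=bc+2ad$, gives $c(a+d)=0$, hence $d=-a$.
\item Then $f_2=0$ gives $t=a^2(2a+c)/c^3$, while $t'=bd^2/(ac^2)=ab/c^2$.
\item The equality $t=t'$ is equivalent to $c(b-a)=2a^2$, which is exactly $B_3=B_2$ with $d=-a$.
\end{itemize}

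Finally, the characteristic needing separate care is $2$, not $3$. There $B_3=B_2$ reads $ac=bc$, so $a=b$ no longer implies $ad=0$, and your dismissal of $a=b$ breaks. Again genuine solutions occur, namely $a=b=d$ with $t=t'=a^2/c^2$. Also, contrary to your remark, $t$ does occur in $A_2$, through the term $3c^2dt$.
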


\begin{proof}
The computations are similar to those in the proof of Proposition~\ref{Proposition 1.1}. Let $\phi=(aX+b)/(cX+d)\in\pgl(2,\f_q)$ be such that
\begin{equation}\label{P1.2-1}
\mathcal S\Bigl(\frac{X^3+X^2+t}X\circ\phi\Bigr)=\mathcal S\Bigl(\frac{X^3+X^2+t'}X\Bigr)=\langle X^3+X^2+t',\,X\rangle_{\f_q}.
\end{equation}
Write
\[
\frac{X^3+X^2+t}X\circ\phi=\frac{A_3X^3+A_2X^2+A_1X+A_0}{B_3X^3+B_2X^2+B_1X+B_0},
\]
where
\begin{equation}\label{A}
\begin{cases}
A_0=b^3+b^2 d+d^3 t,\cr
A_1=3 a b^2+2 a b d+b^2 c+3 c d^2 t,\cr
A_2=3 a^2 b+a^2 d+2 a b c+3c^2 d t,\cr
A_3=a^3+a^2 c+c^3 t.
\end{cases}
\end{equation}
\begin{equation}\label{B}
\begin{cases}
B_0=b d^2,\cr
B_1=d (2 b c+ad),\cr
B_2=c (bc+2 a d),\cr
B_3=a c^2
\end{cases}
\end{equation}
and $(A_3,B_3)\ne (0,0)$. 
We have 
\[
B_3(A_3X^3+\cdots+A_0)-A_3(B_3X^3+\cdots+B_0)=(ad-bc)(f_0+f_1X+f_2X^2),
\]
where
\begin{equation}\label{f0-2}
\begin{cases}
f_0=-a b^2 c-a^2 b d-a b c d+c^2 d^2 t,\cr
f_1=-3 a^2 b c-a b c^2-a^3 d-a^2 c d+2 c^3 d t,\cr
f_2=c (-2 a^3-a^2 c+c^3 t).
\end{cases}
\end{equation}
It follows from that \eqref{P1.2-1} that
\[
\begin{cases}
A_3=A_2,\cr
B_3=B_2,\cr
f_0=0,\cr
f_2=0.
\end{cases}
\]
Moreover,
\begin{equation}\label{t'}
t'=\begin{cases}
\displaystyle\frac{B_0}{A_3}=\frac{b^3+b^2d+d^3t}{a^3+a^2c+c^3t}&\text{if}\ a^3+a^2c+c^3t\ne 0,\vspace{0.4em}\cr
\displaystyle\frac{B_0}{B_3}=\frac{bd^2}{ac^2}&\text{if}\ ac\ne 0.
\end{cases}
\end{equation}
Let $\res(f_0,f_2;t)$ denote the resultant of $f_0$ and $f_2$ with respect to $t$. We have 
\[
0=\res(f_0,f_2;t)=ac^3(bc-ad)(bc+2ad+cd),
\]
whence
\begin{equation}\label{res}
ac(bc+2ad+cd)=0.
\end{equation}

\medskip
{\bf Case 1.} Assume that $a=0$. Then $c\ne 0$ and $f_2=c^4t\ne 0$, which is a contradiction.

\medskip
{\bf Case 2.} Assume that $a\ne0$ but $c=0$. Then $d\ne 0$ and $f_0=-a^2bd$, whence $b=0$. Now $A_3=A_2$ gives $a=d$. Then by \eqref{t'}, $t'=t$.

\medskip
{\bf Case 3.} Assume that $ac\ne0$. Then by \eqref{res}, $bc+2ad+cd=0$. On the other hand, $B_3=B_2$ gives $bc+2ad=ac$. It follows that $a=-d$.

\medskip
{\bf Case 3.1.} Assume $p>2$.
If $b+d=0$, then $bc+2ad+cd=2ad=-2a^2\ne 0$, which is a contradiction.
Assume that $b+d\ne0$. Then it follows from $bc+2ad+cd=0$ that
\begin{equation}\label{c}
c=-\frac{2ad}{b+d}.
\end{equation}
Using the substitutions $a=-d$ and \eqref{c} in $f_2=0$ gives
\[
t=\frac{2a^3+a^2c}{c^3}=-\frac{b(b+d)^3}{4d^3}
\]
On the other hand, by \eqref{t'} and the same substitutions,
\[
t'=\frac{bd^2}{ac^2}=-\frac{b(b+d)^3}{4d^3}.
\]
Hence $t'=t$.

\medskip
{\bf Case 3.2.} Assume $p=2$. Then \eqref{res} gives $d=b$, and $f_2=0$ gives 
\[
t=\frac{a^2}{c^2}.
\]
Since we also have $a=d=b$, by \eqref{t'},
\[
t'=\frac{bd^2}{ac^2}=\frac{a^2}{c^2}.
\]
\end{proof}

\begin{prop}\label{Proposition 1.3}
For $t,t'\in\f_q^*$,
\[
\frac{X^3+X^2+t}X\not\sim\frac{X^3+t'}X.
\]
\end{prop}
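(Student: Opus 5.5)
The plan is to argue directly, exactly as in the proofs of Propositions~\ref{Proposition 1.1} and~\ref{Proposition 1.2}, reusing the coefficient formulas already computed there. Suppose, for contradiction, that there is $\phi=(aX+b)/(cX+d)\in\pgl(2,\f_q)$ with
\[
\mathcal S\Bigl(\frac{X^3+t'}X\circ\phi\Bigr)=\langle X^3+X^2+t,\,X\rangle_{\f_q}.
\]
Write $\frac{X^3+t'}X\circ\phi$ with numerator $\sum A_iX^i$ and denominator $\sum B_iX^i$. Then $A_i,B_i$ are given by \eqref{P3.2A} and \eqref{P3.2B}, with $t$ replaced by $t'$. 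We also have
\[
B_3\sum A_iX^i-A_3\sum B_iX^i=(ad-bc)(f_0+f_1X+f_2X^2),
\]
with $f_0,f_1,f_2$ as in \eqref{P3.2f0-2}, again with $t'$ in place of $t$; in particular $f_2=c(-2a^3+c^3t')$.

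Every element of the target span has equal coefficients of $X^3$ and $X^2$. The combination $B_3\sum A_iX^i-A_3\sum B_iX^i$ has no $X^3$ term, so it must be a multiple of $X$. This yields the system
\[
A_3=A_2,\quad B_3=B_2,\quad f_0=0,\quad f_2=0.
\]
Explicitly:
\begin{itemize}
\item $A_3=A_2$ reads $3(a^2b+c^2dt')=a^3+c^3t'$;
\item $B_3=B_2$ reads $c(bc+2ad)=ac^2$;
\item $f_0=0$ reads $-ab^2c-a^2bd+c^2d^2t'=0$;
\item $f_2=0$ reads $c(-2a^3+c^3t')=0$.
\end{itemize}

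\textbf{Case $c=0$.} Then $ad\ne0$, and $f_0=-a^2bd=0$ forces $b=0$. Hence $A_2=0$ while $A_3=a^3\ne0$, a contradiction.

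\textbf{Case $c\ne0$.} Since $\phi$ is only determined up to a scalar, normalize $c=1$. Then $f_2=0$ gives $t'=2a^3$. Because $t'\ne0$, this forces $a\ne0$, and also $p\ne2$. The condition $B_3=B_2$ gives $b+2ad=a$. Substituting $t'=2a^3$ into $f_0$ should factor as
\[
f_0=-a(b-ad)(b+2ad).
\]
Since $b+2ad=a\ne0$, we get $b=ad$. Combined with $b+2ad=a$ this gives $3ad=a$, so $3d=1$. If $p=3$ this is already a contradiction. Otherwise $d=1/3$ and $b=a/3$, so $ad-bc=a/3-a/3=0$, contradicting $\phi\in\pgl(2,\f_q)$.

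The only delicate step is the factorization of $f_0$ after substituting $t'=2a^3$ and $c=1$; it should be checked by hand (or symbolically) before relying on it. Notice that the relation $A_3=A_2$ is not even needed in this case. If the factorization turned out to be wrong, the fallback is to take the resultant of $f_0$ and $f_2$ with respect to $t'$, as in the proof of Proposition~\ref{Proposition 1.2}, and split according to its factors. Either way, no equivalence can exist, which proves the proposition.
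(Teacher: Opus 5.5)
Your argument is correct. After normalizing $c=1$ and substituting $t'=2a^3$, one indeed has $f_0=-a(b^2+abd-2a^2d^2)=-a(b-ad)(b+2ad)$, so the fallback is unnecessary. Moreover, $b=ad$ together with $c=1$ already gives $ad-bc=0$ directly, without the detour through $3d=1$.

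This is the same coefficient-comparison argument in $\mathcal S$ that the paper uses, but run in the opposite direction. The paper composes $(X^3+X^2+t)/X$ with $\phi$, reuses the formulas of Proposition~\ref{Proposition 1.2}, and splits cases via the resultant \eqref{res}. You instead compose $(X^3+t')/X$ with $\phi$ and use the formulas of Proposition~\ref{Proposition 1.1}, which lets you solve $f_2=0$ for $t'$ at once and avoid the resultant.
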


\begin{proof}
Assume to the contrary that there exists $\phi=(aX+b)/(cX+d)\in\pgl$ be such that
\[
\mathcal S\Bigl(\frac{X^3+X^2+t}X\circ\phi\Bigr)=\mathcal S\Bigl(\frac{X^3+t'}X\Bigr)=\langle X^3+t',\,X\rangle_{\f_q}.
\]
Write
\[
\frac{X^3+X^2+t}X\circ\phi=\frac{A_3X^3+A_2X^2+A_1X+A_0}{B_3X^3+B_2X^2+B_1X+B_0},\quad A_3B_3\ne 0,
\]
where $A_0,\dots,A_3,B_0,\dots,B_3$ are given by \eqref{A} and \eqref{B}. We have
\[
B_3(A_3X^3+\cdots+A_0)-A_3(B_3X^3+\cdots+B_0)=(ad-bc)(f_0+f_1X+f_2X^2),
\]
where $f_0,f_1,f_2$ are given by \eqref{f0-2}. We have
\[
\begin{cases}
A_2=0,\cr
B_2=0,\cr
f_0=0,\cr
f_2=0.
\end{cases}
\]
Moreover, \eqref{res} still holds. 
If $a=0$, then $c\ne 0$ and $f_2=c^4t\ne 0$, which is a contradiction.
If $a\ne 0$ but $c=0$, then $d\ne 0$ and $f_0=-a^2bd$, whence $b=0$. Then $A_2=a^2d\ne 0$, which is a contradiction.
Now assume $ac\ne 0$. Then \eqref{res} and $B_2=0$ together imply that $cd=0$, i.e., $d=0$, and \eqref{res} further implies that $bc=0$, i.e., $b=0$. We have $b=d=0$, which is a contradiction.
\end{proof}


\section{Ramification Points}\label{sec4}

For a function field $F$ (a field of algebraic functions in one variable over a constant field $K$), let $\Bbb P_F$ denote the set of places of $F$.

Let $f\in\f_q(X)\setminus\f_q$, and consider the extension $\f_q(X)/\f_q(f)$ as function fields over $\f_q$. 
\[
\begin{tikzpicture}[xscale=1, yscale=0.8]

\node at (0,0) {$\f_q(f)$};
\node at (0,2) {$\f_q(X)$};

\draw  (0,0.5) -- (0,1.5);
\end{tikzpicture}
\]
Degree one places (rational places) of $\f_q(X)$ correspond to the elements $\f_q\cup\{\infty\}$: $a\in\f_q$ corresponds to the zero of $X-a$, denoted by $\frak P_a$, and $\infty$ corresponds to the pole of $X$, denoted by $\frak P_\infty$. The same is true for the rational function field $\f_q(f)/\f_q$: $b\in\f_q$ corresponds to the zero of $f-b$, denoted by $\frak p_b$, and $\infty$ corresponds to the pole of $f$, denoted by $\frak p_\infty$. For $a\in\f_q\cup\{\infty\}$, $\frak P_a\mid\frak p_{f(a)}$. The ramification index $e(\frak P_a\,|\,\frak p_{f(a)})$ can be computed as follows:
\[
e(\frak P_a\,|\,\frak p_{f(a)})=
\begin{cases}
\nu_{X-a}(f(X)-f(a))&\text{if}\ a\ne\infty,\ f(a)\ne\infty,\vspace{0.4em}\cr
\displaystyle\nu_{X-a}\Bigl(\frac1{f(X)}\Bigr)&\text{if}\ a\ne\infty,\ f(a)=\infty,\vspace{0.4em}\cr
\nu_X(f(1/X)-f(\infty))&\text{if}\ a=\infty,\ f(a)\ne\infty,\vspace{0.4em}\cr
\displaystyle\nu_X\Bigl(\frac1{f(1/X)}\Bigr)&\text{if}\ a=\infty,\ f(a)=\infty.
\end{cases}
\]
In the above, $\nu_{X-a}$ is the $(X-a)$-adic valuation of rational functions, i.e., the largest integer $m$ such that $(X-a)^m$ divides the numerator of the rational function. If $e(\frak P_a\,|\,\frak p_{f(a)})=e>1$, we call $a$ a ramification point of $f$ of index $e$ and degree 1, denoted as type $e/1$, and we call $f(a)$ the corresponding branch point of $a$. When $a,f(a)\ne\infty$, it is easy to see that $e(\frak P_a\,|\,\frak p_{f(a)})>1$ (i.e., $\frak P_a$ is ramified in $\f_q(X)/\f_q(f)$) if and only if $f'(a)=0$.

Now let $\frak P\in\Bbb P_{\f_q(X)}$ be a place of degree $n>1$, hence $\frak P$ is generated by an irreducible polynomial $p(X)\in\f_q[X]$ of degree $n$. Let $r_1,\dots,r_n\in\overline\f_q$ be the roots of $p(X)$, and consider the constant field extensions $\overline\f_q(X)/\f_q(X)$ and $\overline\f_q(f)/\f_q(f)$.
\[
\begin{tikzpicture}[xscale=0.8, yscale=0.8]

\node at (0,0) {$\f_q(f)$};
\node at (0,2) {$\f_q(X)$};
\node at (2,2) {$\overline\f_q(f)$};
\node at (2,4) {$\overline\f_q(X)$};

\draw  (0,0.5) -- (0,1.5);
\draw  (2,2.5) -- (2,3.5);
\draw  (0.5,0.5) -- (1.5,1.5);
\draw  (0.5,2.5) -- (1.5,3.5);
\end{tikzpicture}
\]
The conorm of $\frak P$ with respect to $\overline\f_q(X)/\f_q(X)$ is given by $\text{Con}_{\overline\f_q(X)/\f_q(X)}(\frak P)=\frak Q_{r_1}+\cdots+\frak Q_{r_n}$, where $\frak Q_{r_i}\in\Bbb P_{\overline\f_q(X)}$ is the zero of $X-r_i$. Since the constant field extensions $\overline\f_q(X)/\f_q(X)$ and $\overline\f_q(f)/\f_q(f)$ are unramified (\cite[Theorem~3.6.3]{Stichtenoth-2009}), the ramification index of $\frak P$ in $\f_q(X)/\f_q(f)$ equals that of $\frak Q_{r_i}$ in $\overline\f_q(X)/\overline\f_q(f)$, that is,
\[
e(\frak P\,|\, \frak p)=e(\frak Q_{r_i}\,|\, \frak q_{f(r_i)}),
\]
where $\frak p\in\Bbb P_{\f_q(f)}$ is the place below $\frak P$ and $\frak q_{f(r_i)}\in\Bbb P_{\overline\f_q(f)}$ denotes the zero of $f-f(r_i)$ if $f(r_i)\ne\infty$ or the pole of $f$ if $f(r_i)=\infty$. 
\[
\begin{tikzpicture}[xscale=0.8, yscale=0.8]

\node at (0,0) {$\frak p$};
\node at (0,2) {$\frak P$};
\node at (2,2) {$\frak q_{f(r_i)}$};
\node at (2,4) {$\frak Q_{r_i}$};

\draw  (0,0.5) -- (0,1.5);
\draw  (2,2.5) -- (2,3.5);
\draw  (0.5,0.5) -- (1.5,1.5);
\draw  (0.5,2.5) -- (1.5,3.5);
\end{tikzpicture}
\]
If $e(\frak P\,|\, \frak p)=e>1$, we call each $r_i$ a ramification point of $f$ of index $e$ and degree $n$, denoted as type $e/n$, and we call $f(r_i)$ the corresponding branch point of $r_i$.

\medskip
\noindent Note: In the above, it suffices to consider the extensions $\f_{q^n}(X)/\f_q(X)$ and $\f_{q^n}(f)/\f_q(f)$. 

\medskip
Let $f_1,f_2\in\f_q(X)\setminus\f_q$ be PGL-equivalent over $\f_q$, say $f_2=\phi\circ f_1\circ \psi$, where $\phi,\psi\in\pgl(2,\f_q)$. Let $\sigma_\psi\in\aut(\f_q(X)/\f_q)$ be defined by $\sigma_\psi(X)=\psi(X)$, and let $\tau=\sigma_\psi|_{\f_q(f_1)}$. Then for each $\frak P\in\Bbb P_{\f_q(X)}$ and $\frak p\in\Bbb P_{\f_q(f_1)}$ with $\frak P\mid \frak p$, we have $\sigma_\psi(\frak P)\in\Bbb P_{\f_q(X)}$, $\tau(\frak p)\in\Bbb P_{\f_q(f_2)}$, and $\sigma_\psi(\frak P)\mid \tau(\frak p)$. Moreover, $\frak P\,|\,\frak p$ and $\sigma_\psi(\frak P)\,|\,\tau(\frak p)$ have the same ramification index and relative degree. 
\begin{equation}\label{diagm}
\begin{tikzpicture}[xscale=1, yscale=0.8, baseline=2em]

\node at (0,0) {$\f_q(f_1)$};
\node at (0,2) {$\f_q(X)$};
\node at (2,0) {$\f_q(f_2)$};
\node at (2,2) {$\f_q(X)$};

\draw (0,0.5) -- (0,1.5);
\draw (2,0.5) -- (2,1.5);
\draw [->] (0.7,0) to (1.3,0);
\draw [->] (0.7,2) to (1.3,2);

\node [above] at (1,0) {$\scriptstyle\tau$};
\node [above] at (1,2) {$\scriptstyle\sigma_\psi$};
\end{tikzpicture}
\end{equation}
The rational function fields in the diagram~\eqref{diagm} have constant field extensions $\overline\f_q(X)/\f_q(X)$ and $\overline\f_q(f_i)/\f_q(f_i)$. The isomorphisms $\sigma_\psi$ and $\tau$ extend to these extension fields such that the following diagram commutes, where the unlabeled maps are inclusions.
\[
\begin{tikzpicture}[xscale=1.5, yscale=1.2]

\node at (0,0) {$\f_q(f_1)$};
\node at (0,2) {$\f_q(X)$};
\node at (2,0) {$\f_q(f_2)$};
\node at (2,2) {$\f_q(X)$};

\draw [->] (0,0.5) -- (0,1.5);
\draw [->] (2,0.5) -- (2,1.5);
\draw [->] (0.5,0) to (1.5,0);
\draw [->] (0.5,2) to (1.5,2);

\node [above] at (1,0) {$\scriptstyle\tau$};
\node [above] at (1.3,2) {$\scriptstyle\sigma_\psi$};

\node at (1,1) {$\overline\f_q(f_1)$};
\node at (1,3) {$\overline\f_q(X)$};
\node at (3,1) {$\overline\f_q(f_2)$};
\node at (3,3) {$\overline\f_q(X)$};

\draw [->] (1,1.5) -- (1,2.5);
\draw [->] (3,1.5) -- (3,2.5);
\draw [->] (1.5,1) to (2.5,1);
\draw [->] (1.5,3) to (2.5,3);

\node [above] at (2.3,1) {$\scriptstyle\tau$};
\node [above] at (2,3) {$\scriptstyle\sigma_\psi$};

\draw [->] (0.3,0.3) -- (0.7,0.7);
\draw [->] (2.3,0.3) -- (2.7,0.7);
\draw [->] (0.3,2.3) -- (0.7,2.7);
\draw [->] (2.3,2.3) -- (2.7,2.7);

\end{tikzpicture}
\]
Moreover, $\psi:\overline\f_q\cup\{\infty\}\to\overline\f_q\cup\{\infty\}$ maps the ramification points of $f_2$ to those of $f_1$ and preserves the types (index/degree) of the ramification points.

Given $f\in\f_q(X)\setminus\f_q$ with ramification points $r_1,\dots,r_k\in\overline\f_q\cup\{\infty\}$, let $e_i$ denote the ramification index of $r_i$ and $d_i$ denote the the degree of $r_i$ over $\f_q$. (The degree of $\infty$ over $\f_q$ is defined to be 1.) We call the unordered sequence $(e_1/d_1,\dots,e_k/d_k)$ the {\em ramification type} of $f$. It follows from the above that the ramification type is an invariant under the $\pgl(2,\f_q)$-equivalence. Assume $\deg f=3$ and that $\f_q(X)/\f_q(f)$ is separable, equivalently, $\overline\f_q(X)/\overline\f_q(f)$ is separable. (Since $\deg f=3$, it is easy to see that $\f_q(X)/\f_q(f)$ is separable unless $p=\ch \f_q=3$ and $f=\phi^3$ for some $\phi\in\pgl(2,\f_q)$.) By the Hurwitz genus formula (\cite[Theorem~3.4.13]{Stichtenoth-2009}) applied to $\overline\f_q(X)/\overline\f_q(f)$, the different $\text{Diff}(\overline\f_q(X)/\overline\f_q(f))$ has degree 4. Using Dedekind's different theorem (\cite[Theorem~3.5.1]{Stichtenoth-2009}), we see that the possible ramification types of $f$ are given in Table~\ref{tb-ram}, where $*$ denotes the degree of the ramification point.

\begin{table}[H]
\vspace{-0.5em}
\caption{Possible ramification types of $f$ where $\deg f=3$ and $\f_q(X)/\f_q(f)$ is separable}
    \label{tb-ram}
    \centering
    {\renewcommand{\arraystretch}{1.5}
\begin{tabular}{c|c}
    $p$ & ramification type\\ 
    \hline 
    $2$ & $(2/*),\ (2/*,2/*),\ (2/*,3/*),\ (3/*,3/*)$ \\
    $3$ & $(2/*,2/*,2/*,2/*),\ (2/*,3/*),\ (3/*)$\\
    $\geq 5$ & $(2/*,2/*,2/*,2/*),\ (2/*,2/*,3/*),\ (3/*,3/*)$ 
\end{tabular}}
\end{table}


\section{Cross Ratio}\label{sec5}

Let $\f$ be any field. The rational function
\[
(X_1,X_2;X_3,X_4):=\frac{(X_1-X_3)(X_2-X_4)}{(X_1-X_4)(X_2-X_3)}\in\f(X_1,X_2,X_3,X_4)
\]
is called the {\em cross ratio} of $X_1,X_2,X_3,X_4$. To avoid confusion with generic quadruples, we write the cross ratio as
\[
C=C(X_1,X_2,X_3,X_4)=(X_1,X_2;X_3,X_4).
\]
The symmetric group $S_4$ acts on $\f(X_1,X_2,X_3,X_4)$ by permuting $X_1,\dots,X_4$: $\pi(f)=f(X_{\pi(1)},\dots,X_{\pi(4)})$ for $\pi\in S_4$ and $f\in\f(X_1,X_2,X_3,X_4)$. The stabilizer of $C$ in $S_4$ is
\[
V=\{(1),(1,2)(3,4),(1,3)(2,4),(1,4)(2,3)\}\vartriangleleft S_4,
\]
and we have $S_4=V\rtimes S_3$. For $\{i_1,i_2,i_3,i_4\}=\{1,2,3,4\}$, we write
\[
C_{i_1,i_2,i_3,i_4}=C(X_{i_1},X_{i_2},X_{i_3},X_{i_4}).
\]
Then the $S_4$-orbit of $C$ is 
\[
[C]=\bigl\{C_{i_1,i_2,i_3,4}:\{i_1,i_2,i_3\}=\{1,2,3\}\bigr\},
\]
where
\begin{equation}\label{C1234}
\begin{cases}
C_{1234}=C,\cr
C_{1324}=1-C,\vspace{0.2em}\cr
C_{2134}=\displaystyle \frac 1C,\vspace{0.2em}\cr
C_{2314}=\displaystyle 1-\frac 1C,\vspace{0.2em}\cr
C_{3124}=\displaystyle \frac 1{1-C},\vspace{0.2em}\cr
C_{3214}=\displaystyle \frac C{C-1}.
\end{cases}
\end{equation}
(In fact, fix any $i_j\in\{1,2,3,4\}$, $1\le j\le 4$, each element in $[C]$ is uniquely of the form $C_{i_1,i_2,i_3,i_4}$, where $(i_1,\dots,\widehat{i_j},\dots,i_4)$ is a permutation of $\{1,\dots,4\}\setminus\{i_j\}$.)

An important fact about the cross ratio is this: For $(a_1,\dots,a_4),(b_1,\dots,b_4)\in(\f\cup\{\infty\})^4$ with $|\{a_1,\dots,a_4\}|=4=|\{b_1,\dots,b_4\}|$, $C(a_1,\dots,a_4)=C(b_1,\dots,b_4)$ if and only if there exists $\phi\in\pgl(2,\f)$ such that $\phi(a_i)=b_i$ for all $1\le i\le 4$.


\section{Class III with $p\ge 5$}\label{sec6}

\subsection{Overview}\

Let $f\in\f_q(X)$ be in Class III, that is, $\deg f=3$, and there exists $\alpha\in\f_q\cup\{\infty\}$ such that $|f^{-1}(\alpha)|=3$ but $|f^{-1}(\beta)|\ne 2$ for all $\beta\in\f_q\cup\{\infty\}$. Let $p=\ch\f_q$. The assumption $p\ge 5$ is not needed until Section~\ref{sec6.4}.

Under equivalence, we may assume that $f^{-1}(\infty)=\{0,1,\infty\}$. Then it is easy to see that $f$ is equivalent to
\begin{equation}\label{fst}
f_{s,t}:=\frac{X^3+sX+t}{X(X-1)},\ \text{where}\ s,t\in\f_q,\ t\ne 0,\ 1+s+t\ne0.
\end{equation}
We have
\[
f_{s,t}'(X)=\frac{G_{s,t}(X)}{X^2(X-1)^2},
\]
where
\begin{equation}\label{gst}
G_{s,t}(X)=X^4-2X^3-sX^2-2tX+t.
\end{equation}
As seen in Section~\ref{sec4}, the ramification points of $f_{s,t}$ are precisely the roots of $G_{s,t}$, as clearly, $0,1,\infty$ are not ramification points of $f_{s,t}$. Moreover, among the roots of $G_{s,t}$, those that are conjugates over $\f_q$ have the same ramification index.

Before proceeding, we must realize that it is possible for $f_{s,t}$ to be in Class II. The next theorem tells exactly when this happens.

\begin{thm}\label{Theorem 2}
Let $s,t\in\f_q$ be such that $t\ne 0$ and $1+s+t\ne 0$. Then the following statements are equivalent.
\begin{itemize}
\item[(i)] $|f_{s,t}^{-1}(\alpha)|=2$ for some $\alpha\in\f_q$.

\item[(ii)] $G_{s,t}$ has a root $x\in\f_q$ with $x^3\ne -t$.

\item[(iii)] There exist $a,b\in\f_q$, $a\ne b$, such that 
\[
\begin{cases}
t=-a^2b,\cr
s=-2a-b+a^2+2ab.
\end{cases}
\]
\end{itemize}
\end{thm}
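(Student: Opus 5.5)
The plan is to express everything through the fibers of $f_{s,t}$ over finite $\alpha$. Note first that $f_{s,t}^{-1}(\infty)=\{0,1,\infty\}$, because $t\ne0$ and $1+s+t\ne0$ mean that $X^3+sX+t$ does not vanish at $0$ or $1$. So for $\alpha\in\f_q$ the fiber $f_{s,t}^{-1}(\alpha)$ is exactly the set of roots in $\f_q$ of
\[
H_\alpha(X)=X^3+sX+t-\alpha X(X-1)=X^3-\alpha X^2+(s+\alpha)X+t .
\]
Every root of $H_\alpha$ lies outside $\{0,1\}$, since $H_\alpha(0)=t$ and $H_\alpha(1)=1+s+t$. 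The fiber has exactly two elements precisely when $H_\alpha$ has a double root and a different simple root, both in $\f_q$. (If $H_\alpha$ has two distinct roots in $\f_q$, the third root is rational too.) I will use Vieta's relations throughout: the sum of the roots is $\alpha$ and their product is $-t$.

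For (i)$\Rightarrow$(ii), suppose $H_\alpha=(X-x)^2(X-y)$ with $x\ne y$ in $\f_q$.
\begin{itemize}
\item Since $x$ is a multiple root of $f_{s,t}-\alpha$ and $x\ne0,1$, we get $f_{s,t}'(x)=0$, hence $G_{s,t}(x)=0$.
\item Vieta gives $x^2y=-t$. If $x^3=-t$ held, then $x^2y=x^3$ would force $y=x$ (as $x\ne0$), a contradiction.
\end{itemize}

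For (ii)$\Rightarrow$(i), take a root $x\in\f_q$ of $G_{s,t}$ with $x^3\ne-t$.
\begin{itemize}
\item Since $G_{s,t}(0)=t\ne0$ and $G_{s,t}(1)=-(1+s+t)\ne0$, we have $x\ne0,1$. So $\alpha:=f_{s,t}(x)\in\f_q$ and $x$ is at least a double root of $H_\alpha$.
\item Write the third root as $y=\alpha-2x\in\f_q$. Then $x^2y=-t\ne -x^3$, so $y\ne x$. Hence the fiber is exactly $\{x,y\}$.
\end{itemize}

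For (i)$\Leftrightarrow$(iii), compare coefficients of $H_\alpha$ with
\[
(X-a)^2(X-b)=X^3-(2a+b)X^2+(a^2+2ab)X-a^2b .
\]
This gives $\alpha=2a+b$, $s+\alpha=a^2+2ab$ and $t=-a^2b$, that is, exactly the relations in (iii). So (i) yields such $a\ne b$.

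Conversely, given $a\ne b$ satisfying (iii), set $\alpha=2a+b$. Then $H_\alpha=(X-a)^2(X-b)$. The standing hypotheses $t\ne0$ and $1+s+t\ne0$ ensure $a,b\notin\{0,1\}$: a root at $0$ would force $t=0$, and a root at $1$ would force $1+s+t=0$. So $f_{s,t}^{-1}(\alpha)=\{a,b\}$ has two elements.

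There is no real obstacle here. The only delicate point is the triple-root case: one must see that $x^3\ne-t$ is exactly what excludes a triple root, and check that neither $0$ nor $1$ can sneak into a fiber.
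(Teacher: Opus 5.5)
Your proof is correct, but it links statement (ii) to the others by a different route than the paper.

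\begin{itemize}
\item The paper proves (ii)$\Leftrightarrow$(iii) by pure elimination. From $t=-a^2b$ it notes that $a\ne b$ is equivalent to $a^3\ne -t$. Substituting $b=-t/a^2$ into $s=-2a-b+a^2+2ab$ then turns that relation into $a^4-2a^3-sa^2-2ta+t=0$, that is, $G_{s,t}(a)=0$.
\item You instead prove (i)$\Leftrightarrow$(ii) directly. A multiple point $x\ne 0,1$ of a fiber over $\alpha\in\f_q$ is exactly a zero of $f_{s,t}'=G_{s,t}/(X^2(X-1)^2)$. Vieta then gives the third root $y=\alpha-2x$, and the relation $x^2y=-t$ shows that $y\ne x$ is equivalent to $x^3\ne -t$.
\end{itemize}

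Your (i)$\Leftrightarrow$(iii) step is the same coefficient comparison as the paper's.

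What each route buys:
\begin{itemize}
\item The paper's elimination is a mechanical computation. It produces the identity $G_{s,t}(a)=0$ for the parameter $a$ of the double point, and the paper reuses that identity when it computes $\theta(f_{s,t})$.
\item Your argument explains why the ramification polynomial $G_{s,t}$ appears at all: it is the type $2/1$ ramification criterion. It also makes explicit checks the paper leaves tacit. These are that $0,1,\infty$ never lie in a fiber over $\alpha\in\f_q$, that a rational root of $G_{s,t}$ cannot be $0$ or $1$, and that two distinct rational roots of the cubic force the third to be rational.
\end{itemize}
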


\begin{proof}
For $\alpha\in\f_q$, $|f_{s,t}^{-1}(\alpha)|=2$ if and only if 
\begin{equation}\label{ab}
X^3+sX+t-\alpha X(X-1)=(X-a)^2(X-b)
\end{equation}
for some $a,b\in\f_q$ with $a\ne b$. Comparing the coefficients in \eqref{ab} gives
\[
\begin{cases}
a^2b+t=0,\cr
-a^2-2ab+\alpha+s=0,\cr
2a+b-\alpha=0,
\end{cases}
\]
which is equivalent to
\begin{equation}\label{alpha-s-t}
\begin{cases}
\alpha=2a+b,\cr
t=-a^2b,\cr
s=-2a-b+a^2+2ab.
\end{cases}
\end{equation}
This proves (i) $\Leftrightarrow$ (iii).

In \eqref{alpha-s-t}, since $t\ne 0$, $a\ne b$ if and only if $a^3\ne -t$. Making the substitution $b=-t/a^2$ in the last equation of \eqref{alpha-s-t} gives
\begin{equation}\label{quartic-eq}
a^4-2a^3-sa^2-2ta+t=0.
\end{equation}
This proves (ii) $\Leftrightarrow$ (iii)
\end{proof}

The factorization of $G_{s,t}$ over $\f_q$ together with the information in Table~\ref{tb-ram} determines the ramification type of $f_{s,t}$ in Class III. Accordingly, we divide Class III into the following subclasses:

\begin{itemize}
\item[(III-a)] $G_{s,t}$ is irreducible over $\f_q$. In the case, $p>2$ and the ramification type of $f_{s,t}$ is $(2/4,2/4,2/4,2/4)$.
\medskip
\item[(III-b)] $G_{s,t}$ is a product of two different irreducible quadratics over $\f_q$. In the case, $p>2$ and the ramification type of $f_{s,t}$ is $(2/2,2/2,2/2,2/2)$.
\medskip
\item[(III-c)] $G_{s,t}$ is a square of an irreducible quadratic over $\f_q$. In the case, $p\ne 3$ and the ramification type of $f_{s,t}$ is $(3/2,3/2)$ for $p\ge 5$ and is $(2/2,2/2)$ or $(3/2,3/2)$ for $p=2$.
\medskip
\item[(III-d)] $G_{s,t}$ has at least one root in $\f_q$ and all roots $u$ of $G_{s,t}$ in $\f_q$ satisfy $u^3=-t$. In this case, we will see that if $p=2$, the ramification type of $f_{s,t}$ is $(3/1,3/1)$ (Section~\ref{sec6.7}); if $p=3$, the ramification type of $f_{s,t}$ is $(3/1)$ (Section~\ref{sec7}); if $p\ge 5$, the ramification type of $f_{s,t}$ is $(2/2,2/2,3/1)$ or $(3/1,3/1)$ (Section~\ref{sec6.7}).
\end{itemize}

\begin{lem}\label{lem:2/1}
Let $f\in\f_q(X)$ be of degree 3. Then $f$ is in Class II if and only if $f$ has a ramification point of type $2/1$.
\end{lem}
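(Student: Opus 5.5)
We would prove both directions with the fundamental equality $\sum_{\frak P\mid\frak p}e(\frak P\,|\,\frak p)\deg\frak P=[\f_q(X):\f_q(f)]=3$, applied to a rational place $\frak p=\frak p_\alpha$ of $\f_q(f)$ with $\alpha\in\f_q\cup\{\infty\}$. By Section~\ref{sec4}, the rational places of $\f_q(X)$ lying over $\frak p_\alpha$ are exactly the $\frak P_a$ with $a\in f^{-1}(\alpha)\subseteq\f_q\cup\{\infty\}$. Hence $|f^{-1}(\alpha)|$ is the number of degree-one places in this sum.

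\emph{Forward direction.} Suppose $f$ is in Class II, and fix $\alpha$ with $|f^{-1}(\alpha)|=2$, say $f^{-1}(\alpha)=\{a_1,a_2\}$. The two places $\frak P_{a_1},\frak P_{a_2}$ already contribute at least $2$ to the sum. Any further place over $\frak p_\alpha$ would have to contribute exactly $1$, so it would have degree $1$ and be some $\frak P_{a_3}$ with $a_3\in f^{-1}(\alpha)$. That contradicts $|f^{-1}(\alpha)|=2$. Therefore
\[
e(\frak P_{a_1}\,|\,\frak p_\alpha)+e(\frak P_{a_2}\,|\,\frak p_\alpha)=3,
\]
so one of the indices equals $2$, and the corresponding $a_i$ is a ramification point of type $2/1$.

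\emph{Converse.} Suppose $a\in\f_q\cup\{\infty\}$ is a ramification point of type $2/1$. Then $\alpha:=f(a)\in\f_q\cup\{\infty\}$ and $e(\frak P_a\,|\,\frak p_\alpha)=2$. The remaining contribution to the sum over $\frak p_\alpha$ is $3-2=1$, so there is exactly one other place $\frak P$ over $\frak p_\alpha$, with $e(\frak P\,|\,\frak p_\alpha)=\deg\frak P=1$. Thus $\frak P=\frak P_b$ for some $b\in\f_q\cup\{\infty\}$ with $b\ne a$, and $f^{-1}(\alpha)=\{a,b\}$. This places $f$ in Class II.

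There is no real obstacle. The only points needing care are that the fundamental equality holds even when $\f_q(X)/\f_q(f)$ is inseparable (for instance $p=3$ and $f=\phi^3$; there every fiber has size one and every rational ramification point has index $3$, so both sides of the lemma fail, consistent with the above), and the identification of the rational places above $\frak p_\alpha$ with $f^{-1}(\alpha)$, including the point $\infty$.
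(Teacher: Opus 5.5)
Your proof is correct. It rests on the same degree count as the paper's proof, but packaged differently.

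The paper first uses $\pgl(2,\f_q)$-equivalence to normalize: the fiber becomes $f^{-1}(0)=\{0,1\}$ in one direction, and the ramification point becomes $0$ with $f(0)=0$ in the other. It then writes $f=g/h$ with $g$ monic and reads everything off the factorization of the numerator $g$. This requires a case split according to whether $\deg g=3$ or $\infty$ lies in the fiber.

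You replace this with the fundamental equality $\sum_{\mathfrak P\mid\mathfrak p_\alpha}e(\mathfrak P\,|\,\mathfrak p_\alpha)\deg\mathfrak P=3$ for the rational place $\mathfrak p_\alpha$. You combine it with the identification of the rational places above $\mathfrak p_\alpha$ with the points of $f^{-1}(\alpha)$. Your version needs no normalization and treats $\infty$ on the same footing as finite points. It also makes transparent that nothing depends on separability, as your remark about $f=\phi^3$ in characteristic $3$ illustrates.

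The paper's version, in exchange, is fully elementary. It uses only the valuation description of ramification indices from Section~\ref{sec4}. The factorization of $g$ there is simply the concrete form, in the rational function field, of the fundamental equality you invoke.
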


\begin{proof}
($\Rightarrow$) Under equivalence, we may assume $f^{-1}(0)=\{0,1\}$. Write $f=g/h$, where $g,h\in\f_q[X]$, $g$ is monic and $\gcd(g,h)=1$. Then $\deg g=3$. (Otherwise, $\infty\in f^{-1}(0)$.) It follows that $g(X)=X^2(X-1)$ or $X(X-1)^2$. Hence either 0 or 1 is a ramification point of $f$ of type $2/1$.

\medskip
($\Leftarrow$) Under equivalence, we may assume that 0 is a ramification point of type $2/1$, $f(0)=0$. Write $f=g/h$, where $g,h\in\f_q[X]$, $g$ monic and $\gcd(g,h)=1$. If $f(\infty)\ne 0$, then $\deg g=3$ and 
\[
g(X)=X^2(X-a),\quad a\in\f_q\setminus\{0\}.
\]
In this case, $f^{-1}(0)=\{0,a\}$. If $f(\infty)=0$, then $\deg g<3$ and hence $g(X)=X^2$. In this case, $f^{-1}(0)=\{0,\infty\}$. So $f$ is always in Class II.
\end{proof}

\subsection{Equivalence between $f_{s,t}$ and $f_{s',t'}$}\

For $s,t\in\f_q$ with $t(1+s+t)\ne 0$, we want to determine $\phi=\left[\begin{smallmatrix}a&b\cr c&d\end{smallmatrix}\right]\in\pgl(2,\f_2)$ such that $\psi\circ f_{s,t}\circ\phi=f_{s',t'}$ for some $\psi\in\pgl(2,\f_q)$ and $s',t'\in\f_q$, that is,
\begin{equation}\label{S}
\mathcal S(f_{s,t}\circ\phi)=\mathcal S(f_{s',t'}).
\end{equation}
Write
\[
f_{s,t}\circ\phi=\frac{A_3X^3+A_2X^2+A_1X+A_0}{B_3X^3+B_2X^2+B_1X+B_0},
\]
where
\[
\begin{cases}
A_0=b^3+b d^2 s+d^3 t,\cr
A_1=3 a b^2+2 b c d s+a d^2 s+3 c d^2 t,\cr
A_2=3 a^2 b + b c^2 s + 2 a c d s + 3 c^2 d t,\cr
A_3=a^3+a c^2 s+c^3 t,
\end{cases}
\]
\[
\begin{cases}
B_0=b^2 d - b d^2,\cr
B_1=b^2 c+2 a b d-2 b c d-a d^2,\cr
B_2=2 a b c-b c^2+a^2 d-2 a c d,\cr
B_3=a^2 c-a c^2,
\end{cases}
\]
and $(A_3,B_3)\ne (0,0)$.
We have
\begin{align*}
\mathcal S(f_{s,t}\circ\phi)&\ni B_3(A_3X^3+\cdots+A_0)-A_3(B_3X^3+\cdots+B_0)\cr
&=(bc-ad)\bigl(g_2X(X-1)+g_1X+g_0\bigr),
\end{align*}
where
\[
\begin{cases}
g_0=a^2 b^2-a b^2 c-a^2 b d-a b c d s-b c^2 d t-a c d^2 t+c^2 d^2 t,\cr
g_1=a^4+2 a^3 b-2 a^3 c-3 a^2 b c-a^3 d-a^2 c^2 s-a b c^2 s-a^2 c d s-2 a c^3 t-b c^3 t+c^4 t-3 a c^2 d t+2 c^3 d t,\cr
g_2=a^4 - 2 a^3 c - a^2 c^2 s - 2 a c^3 t + c^4 t.
\end{cases}
\]
Therefore, \eqref{S} holds for some $(s',t')$ if and only if
\begin{equation}\label{g0g1}
\begin{cases}
g_0=0,\cr
g_1=0.
\end{cases}
\end{equation}

The set-wise stabilizer of $\{0,1,\infty\}$ in $\pgl(2,\f_q)$ is an isomorphic copy of $S_3$:
\[
S_3=\Bigl\{
\left[\begin{matrix}1&0\cr 0&1\end{matrix}
\right],
\left[\begin{matrix}1&-1\cr 0&-1\end{matrix}
\right],
\left[\begin{matrix}0&1\cr 1&0\end{matrix}
\right],
\left[\begin{matrix}0&1\cr -1&1\end{matrix}
\right],
\left[\begin{matrix}1&0\cr 1&-1\end{matrix}
\right],
\left[\begin{matrix}1&-1\cr 1&0\end{matrix}
\right]
\Bigr\}<\pgl(2,\f_q).
\]
When $\phi\in S_3$, \eqref{g0g1} is satisfied, and hence \eqref{S} holds for some $(s',t')$, i.e., $\psi\circ f_{s,t}\circ\phi=f_{s',t'}$ for some $\psi\in\pgl(2,\f_q)$. The pair $(s',t')$ is given in Table~\ref{tb2}, where $o(\phi)$ denotes the order of $\phi$.

\begin{table}[ht]
\caption{$S_3$ acting on $f_{s,t}$: $\psi\circ f_{s,t}\circ\phi=f_{s',t'}$, $\phi\in S_3$, $\psi\in\pgl(2,\f_q)$}\label{tb2}
    \centering
\begin{tabular}{c|c|l}
$\phi$ & $o(\phi)$ &\hfil $(s',\ t')$\\ \hline 
\vspace{-0.8em} & \\
$\left[\begin{matrix}1&0\cr 0&1\end{matrix}
\right]$ & 1& $(s,\ t)$\\ 
\vspace{-0.8em} & \\
$\left[\begin{matrix}1&-1\cr 0&-1\end{matrix}
\right]$ & 2 & $(s,\ -1-s-t)$\\
\vspace{-0.8em} & \\
$\left[\begin{matrix}0&1\cr 1&0\end{matrix}
\right]$ & 2 & $\displaystyle \Bigl(\frac st,\ \frac 1t\Bigr)$\\
\vspace{-0.8em} & \\
$\left[\begin{matrix}0&1\cr -1&1\end{matrix}
\right]$ &  3 &$\displaystyle \Bigl(\frac st,\ -\frac{1+s+t}t\Bigr)$\\
\vspace{-0.8em} & \\
$\left[\begin{matrix}1&0\cr 1&-1\end{matrix}
\right]$ & 2 &  $\displaystyle \Bigl(-\frac s{1+s+t},\ -\frac t{1+s+t}\Bigr)$\\
\vspace{-0.8em} & \\
$\left[\begin{matrix}1&-1\cr 1&0\end{matrix}
\right]$ & 3 & $\displaystyle \Bigl(-\frac s{1+s+t},\ -\frac 1{1+s+t}\Bigr)$
\end{tabular}
\end{table}

It is easy to see that under condition~\eqref{g0g1}, 
\begin{equation}\label{notinS3}
\phi\in S_3\ \Leftrightarrow\ ac(a-c)=0\  \Leftrightarrow\ cd(c+d)=0.
\end{equation}
When $ac(a-c)\ne 0$ and $cd(c+d)\ne 0$, solving \eqref{g0g1} for $s,t$ in terms of $a,b,c,d$ gives 
\begin{equation}\label{st}
\begin{cases}
s=\displaystyle \frac{a^2 d+2 a b c+2 a b d-2 a c d-a d^2+b^2 c-b c^2-2 b c d}{c d (c+d)}, \vspace{0.4em}\cr
t=\displaystyle -\frac{a b (a+b)}{c d (c+d)}.
\end{cases}
\end{equation}
In the above, replacing $\phi$ with $\phi^{-1}=\left[\begin{smallmatrix}d&-b\cr -c&a\end{smallmatrix}\right]$ gives
\begin{equation}\label{s'=,t'=}
\begin{cases}
s'=\displaystyle \frac{a^2 d+2 a b c+2 a b d-2 a c d-a d^2+b^2 c-b c^2-2 b c d}{a c (a-c)}, \vspace{0.4em}\cr
t'=\displaystyle \frac{b d (b-d)}{a c (a-c)}.
\end{cases}
\end{equation}

To summarize, we have the following theorem:

\begin{thm}\label{summary}
Equation~\eqref{S} holds if and only if
\begin{itemize}
\item[(i)] $\phi\in S_3$ and $(s',t')$ is given by Table~\ref{tb2}, or
\item[(ii)] $\phi\notin S_3$ and \eqref{st}, \eqref{s'=,t'=} are satisfied.
\end{itemize}
\end{thm}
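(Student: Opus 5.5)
The proof assembles the computations above into an ``if and only if''. First I establish the reduction that makes everything else work: \eqref{S} holds for some $(s',t')$ with $t'(1+s'+t')\ne0$ exactly when \eqref{g0g1} holds. Indeed, $\mathcal S(f_{s',t'})=\langle X^3+s'X+t',\,X(X-1)\rangle_{\f_q}$. A $2$-dimensional span of two coprime cubic-or-lower polynomials equals such a space precisely when it contains $X(X-1)$ and contains a monic cubic with zero $X^2$-coefficient. The combination $B_3(\cdots)-A_3(\cdots)$ has degree at most $2$. If it is nonzero, the condition forces it to be a multiple of $X(X-1)$, that is, $g_0=g_1=0$. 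If it vanishes identically, then $B_3=A_3=0$ (since $\gcd(P,Q)=1$ forces linear independence), which is excluded. Once $X(X-1)$ lies in the span, subtracting a suitable multiple of it from the cubic generator kills the $X^2$-term and yields $X^3+s'X+t'$. The conditions $t'\ne0$ and $1+s'+t'\ne0$ then follow automatically, because $f_{s,t}\circ\phi$ still has degree $3$ with coprime numerator and denominator.

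Next I split according to whether $\phi\in S_3$. Every $\phi\in S_3$ permutes $\{0,1,\infty\}=f_{s,t}^{-1}(\infty)$, so $\mathcal S(f_{s,t}\circ\phi)$ contains $X(X-1)$ and \eqref{g0g1} holds. Substituting the six matrices one at a time and normalizing the cubic gives the entries of Table~\ref{tb2}; this is a routine check. I also verify \eqref{notinS3}. Under \eqref{g0g1} the denominator $B_3X^3+\cdots$ of $f_{s,t}\circ\phi$, or rather the preimage $\phi^{-1}(\{0,1,\infty\})$, must equal $\{0,1,\infty\}$. The reason is that $(f_{s,t}\circ\phi)^{-1}(\psi^{-1}(\infty))=\{0,1,\infty\}$ and $\infty$ is the only value with three $\f_q$-preimages coming from that span. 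Hence $\phi$ stabilizes $\{0,1,\infty\}$. Conversely, $\phi\in S_3$ iff one of $\phi(0),\phi(1),\phi(\infty)$ is $\infty$, i.e.\ iff $cd(c+d)=0$ (equivalently, applying the same reasoning to $\phi^{-1}$, iff $ac(a-c)=0$).

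For $\phi\notin S_3$ I have $cd(c+d)\ne0$. I note that $g_0$ and $g_1$ are linear in $(s,t)$, so \eqref{g0g1} is a $2\times2$ linear system. I would show its determinant is a nonzero multiple of $(ad-bc)\,cd(c+d)$, so Cramer's rule gives the unique solution \eqref{st}. The passage to $(s',t')$ uses the symmetry that $\psi\circ f_{s,t}\circ\phi=f_{s',t'}$ is equivalent to $\psi^{-1}\circ f_{s',t'}\circ\phi^{-1}=f_{s,t}$. Applying \eqref{st} to the pair $(s',t')$ with $\phi^{-1}=\left[\begin{smallmatrix}d&-b\cr -c&a\end{smallmatrix}\right]$ yields \eqref{s'=,t'=}; its denominator $ac(a-c)$ is nonzero by \eqref{notinS3}. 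Conversely, if \eqref{st} holds then \eqref{g0g1} holds, so \eqref{S} holds for some $(s',t')$, and by the same symmetry that pair must be the one given by \eqref{s'=,t'=}.

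The main obstacle is the determinant computation together with the exact form of \eqref{notinS3}. The polynomials $g_0,g_1$ are bulky, so this step is a symbolic-algebra verification, and I must make sure no degenerate case arises from $ad-bc$ or from characteristic $2,3$ factors. Everything else is bookkeeping.
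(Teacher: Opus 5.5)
Your outline follows the paper's: reduce \eqref{S} to \eqref{g0g1}, read off Table~\ref{tb2} for $\phi\in S_3$, use \eqref{notinS3} to make the linear system in $(s,t)$ nondegenerate when $\phi\notin S_3$, and obtain \eqref{s'=,t'=} by passing to $\phi^{-1}$. The reduction to \eqref{g0g1} and the $\phi^{-1}$ symmetry are fine.

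The step that fails is your justification of \eqref{notinS3}. Under \eqref{g0g1} you have $\psi\circ f_{s,t}\circ\phi=f_{s',t'}$. Hence $\phi(\{0,1,\infty\})=f_{s,t}^{-1}(\beta)$ with $\beta=\psi^{-1}(\infty)$, and nothing forces $\beta=\infty$. The function $f_{s,t}$ may, and typically does, have other fibers consisting of three $\f_q$-rational points. Suppose $f_{s,t}$ has another fiber $\{r_1,r_2,r_3\}\subset\Bbb P^1(\f_q)$ over some $\beta\ne\infty$. Then the $\phi$ with $\phi(0)=r_1$, $\phi(1)=r_2$, $\phi(\infty)=r_3$ satisfies \eqref{g0g1} with $\phi\notin S_3$. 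These are exactly the solutions in alternative (ii). If your claim that ``$\infty$ is the only value with three $\f_q$-preimages'' were true, alternative (ii) would be vacuous, contradicting your own next paragraph. Likewise, your sentence ``$\phi\in S_3$ iff one of $\phi(0),\phi(1),\phi(\infty)$ is $\infty$'' is false for a general $\phi$. It holds only under \eqref{g0g1}, so it needs its own argument.

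The correct argument uses the fact that distinct fibers of $f_{s,t}$ are disjoint. If $cd(c+d)=0$, then $\infty\in\phi(\{0,1,\infty\})$. If $ac(a-c)=0$, then $\phi(\infty)\in\{0,1,\infty\}$. In either case the fiber $f_{s,t}^{-1}(\beta)=\phi(\{0,1,\infty\})$ meets $f_{s,t}^{-1}(\infty)=\{0,1,\infty\}$. So $\beta=\infty$ and $\phi$ permutes $\{0,1,\infty\}$, i.e.\ $\phi\in S_3$. The reverse implications are immediate.

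A related precision point concerns the determinant of the linear system in $(s,t)$. It equals $ac^2d(a-c)(c+d)(bc-ad)$, so its nonvanishing needs $ac(a-c)\ne0$ as well as $cd(c+d)\ne0$. This is why both halves of \eqref{notinS3} are required. In the converse direction both factors are nonzero because \eqref{st} and \eqref{s'=,t'=} are assumed to be defined. With these repairs your argument goes through, and no characteristic-dependent constants arise.
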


\subsection{The invariant $\theta$}\

\begin{thm}\label{thm:(-3,1)}
Let $s,t\in\overline\f_q$ with $t(1+s+t)\ne0$. Then $f_{s,t}$ has no ramification point of index 2 if and only if $(s,t)=(-3,1)$.
\end{thm}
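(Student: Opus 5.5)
The plan is to turn the condition ``ramification index $\ge 3$ at a point'' into explicit polynomial identities in $s,t$, and then show that these identities can hold at every root of $G_{s,t}$ only when $(s,t)=(-3,1)$. We work over $\overline\f_q$. As recorded in Section~\ref{sec6} (the remark after \eqref{gst}), the ramification points of $f_{s,t}$ are exactly the roots of $G_{s,t}$; none of them is $0$, $1$ or $\infty$. Since $\deg f_{s,t}=3$, each ramification point has index $2$ or $3$. So the statement says: every root of $G_{s,t}$ has index $3$ if and only if $(s,t)=(-3,1)$.

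\textbf{Criterion for index $3$.} Let $x$ be a root of $G_{s,t}$ and put $\alpha=f_{s,t}(x)$. Then $x$ has index $3$ exactly when
\[
X^3+sX+t-\alpha X(X-1)=(X-x)^3 .
\]
Comparing coefficients, as in the proof of Theorem~\ref{Theorem 2}, gives $\alpha=3x$, $s+\alpha=3x^2$ and $t=-x^3$. Hence $x$ has index $3$ if and only if
\[
s=3x^2-3x,\qquad t=-x^3 .
\]
Conversely, any $x$ satisfying these two relations is automatically a root of $G_{s,t}$ of index $3$.

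\textbf{The ``if'' direction.} Take $(s,t)=(-3,1)$. Then
\[
G_{-3,1}=X^4-2X^3+3X^2-2X+1=(X^2-X+1)^2 .
\]
Every root $x$ satisfies $x^2-x=-1$, so $3x^2-3x=-3=s$. It also gives $x^3=-1$ (because $(x+1)(x^2-x+1)=x^3+1$), so $-x^3=1=t$. By the criterion, every ramification point has index $3$.

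\textbf{The ``only if'' direction.} Suppose every root of $G_{s,t}$ satisfies the criterion.

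First suppose $G_{s,t}$ has two distinct roots $x\ne y$. From $x^3=y^3$ and $x^2-x=y^2-y$ we get the following (for $p\ne 3$; the case $p=3$ is treated below).
\begin{itemize}
\item Dividing the second relation by $x-y$ gives $x+y=1$.
\item Dividing $x^3-y^3=0$ by $x-y$ gives $x^2+xy+y^2=0$, i.e.\ $(x+y)^2=xy$, so $xy=1$.
\end{itemize}
Thus $x$ and $y$ are the roots of $X^2-X+1$. Substituting into the criterion gives $s=-3$ and $t=-x^3=1$.

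Now suppose $G_{s,t}=(X-x)^4$ has a single root. Comparing the $X^3$-coefficients gives $x=1/2$. Then the constant term gives $t=x^4=1/16$, while the criterion gives $t=-x^3=-1/8$. These agree only if $1=-2$, i.e.\ $p=3$. So for $p\ne 3$ (in the odd-characteristic setting of the paper) this case is impossible. Alternatively, Table~\ref{tb-ram} already excludes a single ramification point of index $3$ when $p\ge 5$.

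\textbf{The case $p=3$.} This is the main point needing care, since the division by $3$ implicit above and the Hurwitz count in Table~\ref{tb-ram} both behave differently. Here the criterion reads $s=0$ and $t=-x^3$. Then
\[
G_{0,t}=X^4+X^3+tX+t=(X+1)(X^3+t)=(X+1)(X-x)^3 .
\]
The root $-1$ must also satisfy the criterion, which forces $t=-(-1)^3=1$. Then $(s,t)=(0,1)=(-3,1)$ in characteristic $3$, as required.

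\textbf{Characteristic $2$.} If one wants the statement also for $p=2$, the same coefficient comparison applies verbatim, so the argument above carries over without change.
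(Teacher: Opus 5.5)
Your argument is correct in odd characteristic, but it takes a different route from the paper.

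\textbf{Comparison with the paper.} The paper applies the index-$3$ coefficient comparison at only one ramification point $a$ and factors $G_{s,t}=(X-a)^2(X^2+2(a-1)X-a)$. It then uses Theorem~\ref{Theorem 2} and Lemma~\ref{lem:2/1} over $\overline\f_q$ to turn ``no point of index $2$'' into ``every root $x$ of $G_{s,t}$ satisfies $x^3=-t$''. So the quadratic factor has roots $\epsilon^i a,\epsilon^j a$ with $\epsilon^3=1$, and a single coefficient comparison treats all characteristics together; the conditions $p\ne 2$ and $p=3$ only emerge at the end.

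You instead apply the criterion $s=3x^2-3x$, $t=-x^3$ at every root simultaneously, so two distinct roots immediately force $x+y=1$ and $xy=1$. This avoids Theorem~\ref{Theorem 2} and Lemma~\ref{lem:2/1} entirely, including in the ``if'' direction, which you verify directly from the criterion. The price is a case split: at least two distinct roots, a single root, and $p=3$, where $3x^2-3x=3y^2-3y$ carries no information.

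\textbf{The step that fails.} Your closing claim that the argument carries over to $p=2$ ``without change'' is not right. This matters, because the theorem is stated for arbitrary $q$ and the paper uses it in even characteristic (Appendix~\ref{app-a}, through $\theta$). In the single-root subcase you compare $X^3$-coefficients to get $x=1/2$, which is meaningless when $p=2$. There $G_{s,t}=X^4+sX^2+t$, both $X^3$-coefficients are $0$, and the shape $G_{s,t}=(X-x)^4$ genuinely occurs, namely whenever $s=0$.

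\textbf{The repair.} It takes one line and works in every characteristic. Compare the constant term $t=x^4$ with the criterion $t=-x^3$; since $x\ne0$, this gives $x=-1$. When $p=2$ this means $x=1$, contradicting $1+s+t=(1-x)^3\ne 0$. Alternatively, the $p=2$ row of Table~\ref{tb-ram} also excludes a lone ramification point of index $3$. With this fix your proof is complete.
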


\begin{proof}
First note that Theorem~\ref{Theorem 2} and Lemma~\ref{lem:2/1} still hold with $\f_q$ replaced by $\overline\f_q$. 

\medskip
($\Leftarrow$) We have 
\[
G_{-3,1}(X)=X^4-2X^3+3X^2-2X+1=(X^2-X+1)^2=(X+\epsilon)^2(X+\epsilon^{-1})^2,
\]
where $\epsilon\in\overline\f_q$ is such that $\epsilon^3=1$. By Theorem~\ref{Theorem 2} (with $\f_q$ replaced by $\overline\f_q$), $|f_{-3,1}^{-1}(\alpha)|\ne 2$ for all $\alpha\in\Bbb P^1(\overline\f_q)$. By Lemma~\ref{lem:2/1} with $\f_q$ replaced by $\overline\f_q$, $f_{-3,1}$ has no ramification point of index 2.

\medskip
($\Rightarrow$) Let $a\in\Bbb P^1(\overline\f_q)$ be a ramification point of $f_{s,t}$. By assumption, its ramification index is 3. Note that $a\notin\{0,1,\infty\}$, as it is easy to see that $0,1,\infty$ are not ramification points of $f_{s,t}$. Let $b=f_{s,t}(a)\in\overline\f_q$. Then 
\[
X^3+sX+t-bX(X-1)=(X-a)^3=X^3-3aX^2+3a^2X-a^3.
\]
Comparing the coefficients gives
\[
\begin{cases}
b=3a,\cr
t=-a^3,\cr
s=3(a^2-a).
\end{cases}
\]
Since $t(1+s+t)\ne 0$, we have $a\ne 0,1$. Now
\[
G_{s,t}(X)=X^4-2X^3-3(a^2-a)X^2+2a^3X-a^3=(X-a)^2\bigl(X^2+2(a-1)X-a\bigr).
\]
By Lemma~\ref{lem:2/1} and Theorem~\ref{Theorem 2} (with $\f_q$ replaced by $\overline\f_q$), all roots of $x$ of $G_{s,t}$ satisfy $x^3=-t=a^3$. Hence
\[
X^2+2(a-1)X-a=(X-\epsilon^i a)(x-\epsilon^j a)
\]
where $\epsilon^3=1$ and $i,j\in\Bbb Z$. Comparing the coefficients gives
\[
\begin{cases}
-a=\epsilon^{i+j}a^2,\cr
2(a-1)=-(\epsilon^i+\epsilon^j)a.
\end{cases}
\]
Hence $a=-\epsilon^{2(i+j)}$, whence $t=-a^3=1$. If $o(\epsilon^{i+j})=3$, then $s=3(a^2-a)=-3$. If $\epsilon^{i+j}=1$, then $a=-1$ and $p\ne 2$. Then $2(a-1)=-(\epsilon^i+\epsilon^j)a$ becomes $4=-(\epsilon^i+\epsilon{-i})=-2$ or 1. Hence $p=3$ so $s=0$.
\end{proof}

\begin{cor}\label{cor:(-3,1)}
If $f_{-3,1}\sim f_{s,t}$ over $\overline\f_q$, then $(s,t)=(-3,1)$.
\end{cor}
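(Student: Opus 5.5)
The corollary follows from Theorem~\ref{thm:(-3,1)} once we know that the property ``having no ramification point of index 2'' is a PGL-invariant over $\overline\f_q$. By Theorem~\ref{thm:(-3,1)}, $f_{-3,1}$ has no ramification point of index 2. So it is enough to show that every $f_{s,t}$ equivalent to $f_{-3,1}$ over $\overline\f_q$ also has no ramification point of index 2. Theorem~\ref{thm:(-3,1)} then gives $(s,t)=(-3,1)$. Here $t(1+s+t)\ne 0$ is part of the standing hypothesis that $f_{s,t}$ is a degree-3 function of the form \eqref{fst}, as in the theorem.

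\textbf{Step 1: the invariance.} Suppose $f_{s,t}=\psi\circ f_{-3,1}\circ\phi$ with $\psi,\phi\in\pgl(2,\overline\f_q)$. The discussion in Section~\ref{sec4} relating $\sigma_\psi$ and $\tau$ was stated over $\f_q$, but it works over any field, in particular over $\overline\f_q$. It shows that $\phi$ maps the ramification points of $f_{s,t}$ bijectively onto those of $f_{-3,1}$ and preserves ramification indices.

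The index can also be seen directly. Let $a$ be a point and $b=f_{s,t}(a)$. The ramification index of $f_{s,t}$ at $a$ is the multiplicity of $a$ in the fibre $f_{s,t}^{-1}(b)$, counted as a root of the appropriate numerator (the valuation formula in Section~\ref{sec4}). Post-composing with the bijection $\psi$ of $\Bbb P^1(\overline\f_q)$ does not change fibre multiplicities. Pre-composing with $\phi$ transports them along $\phi$. So index-2 ramification points correspond under $\phi$.

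\textbf{Step 2: conclusion.} Since $f_{-3,1}$ has no point of index 2, neither does $f_{s,t}$. Theorem~\ref{thm:(-3,1)} then forces $(s,t)=(-3,1)$.

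\textbf{Main obstacle.} The only delicate point is making sure the invariance is valid over $\overline\f_q$ rather than $\f_q$, including at $\infty$ and at points whose image is $\infty$. The cleanest fix is to use Lemma~\ref{lem:2/1} with $\f_q$ replaced by $\overline\f_q$, as is already done in the proof of Theorem~\ref{thm:(-3,1)}. Over $\overline\f_q$, that lemma says $f$ has a ramification point of index 2 (necessarily of type $2/1$) if and only if some fibre over $\Bbb P^1(\overline\f_q)$ has exactly two points. The fibre-cardinality condition is visibly preserved by $f\mapsto\psi\circ f\circ\phi$. This gives Step 1 without any valuation bookkeeping.
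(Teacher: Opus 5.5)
Your proposal is correct and follows the paper's argument: the paper's proof is the one-line observation that $f_{s,t}$, being equivalent to $f_{-3,1}$ over $\overline\f_q$, also has no ramification point of index 2, so Theorem~\ref{thm:(-3,1)} forces $(s,t)=(-3,1)$. Your Step 1 spells out the invariance the paper takes from Section~\ref{sec4}, either through Lemma~\ref{lem:2/1} over $\overline\f_q$ and fibre cardinalities, or through fibre multiplicities. The condition $t(1+s+t)\ne 0$ also holds automatically, since equivalence preserves degree.
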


\begin{proof}
Since $f_{s,t}$ also has no ramification point of index 2, the conclusion follows from Theorem~\ref{thm:(-3,1)}.
\end{proof}

Let $f\in\overline\f_q(X)$ be of degree 3 and have a ramification point of index 2. By Theorem~\ref{Theorem 1} (with $\f_q$ replaced by $\overline\f_q$), $f$ is equivalent (over $\overline\f_q$) to precisely one of $(X^3+1)/X$ and $(X^3+X^2+t)/X$, where $0\ne t\in\overline\f_q$. Define
\[
\theta(f)=
\begin{cases}
0&\text{if}\ f\sim\displaystyle\frac{X^3+1}X,\vspace{0.4em}\cr
\displaystyle\frac1t&\text{if}\ f\sim\displaystyle\frac{X^3+X^2+t}X.
\end{cases}
\]
Then $\theta(f)$ is an invariant under the equivalence over $\overline\f_q$. Moreover, if $f\in\f_q(X)$, then $\theta(f)\in\f_q$. To see this claim, let $\sigma:\overline\f_q\to\overline\f_q$, $a\mapsto a^q$ be the Frobenius map and extend it to $\overline\f_q[X]\to\overline\f_q[X]$. Assume that $f\sim g$, where $g=(X^3+1)/X$ or $(X^3+X^2+t)/X$, $0\ne t\in\overline\f_q$. Then
\[
f=\sigma(f)\sim\sigma(g)=\Bigl(\frac{X^3+1}X\ \text{or}\ \frac{X^3+X^2+\sigma(t)}X\Bigr).
\]
It follows that 
\[
\theta(f)=(0\ \text{or}\ 1/\sigma(t))=(\sigma(0)\ \text{or}\ \sigma(1/t))=\sigma(\theta(f)).
\]
Hence $\theta(f)\in\f_q$.

Given $(s,t)\in\f_q^2$ with $t(1+s+t)\ne 0$ and $(s,t)\ne(-3,1)$, by Theorem~\ref{thm:(-3,1)}, $f_{s,t}$ has a ramification point of index 2, hence $\theta(f_{s,t})$ is defined. We now compute $\theta(f_{s,t})$ explicitly. Let $a\in\overline\f_q$ be a ramification point of $f_{st}$ of index 2, and let $\alpha=f(a)$. Clearly, $0,1,\infty$ are not ramification points of $f_{s,t}$, so $a\notin\{0,1,\infty\}$ and hence $\alpha\ne\infty$. Then we have
\[
X^3+sX+t-\alpha X(X-1)=(X-a)^2(X-b),
\]
where $\in\overline\f_q$, $b\ne a$. (This equation is the same as \eqref{ab}, but here $a,b,\alpha$ are not necessarily in $\f_q$.) In what follows, $\sim$ means equivalence over $\overline\f_q$.
We have
\begin{align*}
f_{s,t}\,&\sim\frac{X^3+sX+t}{X(X-1)}-\alpha\cr
&=\frac{(X-a)^2(X-b)}{X(X-1)}\kern8em (X\mapsto X+a)\cr
&\sim\frac{X^2(X+a-b)}{(X+a)(X+a-1)}\kern6.8em (X\mapsto 1/X)\cr
&\sim\frac{a(a-1)X^3+(2a-1)X^2+X}{(a-b)X+1}\kern2em (X\mapsto X-1/(a-b))\cr
&\sim\frac{c_3X^3+c_2X^2+c_0}X,
\end{align*}
where
\[
\begin{cases}
c_0=-b(b-1),\cr
c_2=-(a-b)^2 (-2 a+a^2-b+2 a b),\cr
c_3=(a-1) a (a-b)^3.
\end{cases}
\]
By \eqref{alpha-s-t},
\[
c_2=-(a-b)^2s.
\]
If $s\ne 0$, then $c_2\ne 0$. Thus, with $X\mapsto (c_2/c_3)X$, we have
\[
f_{s,t}\sim\frac{X^3+X^2+t'}X,
\]
where
\[
t'=\frac{c_0c_3^2}{c_2^3}=\frac{b(b-1)(a-1)^2a^2}{s^3}=\frac{t(a^2+t)(a-1)^2}{a^2s^3}\kern3em\text{(by \eqref{alpha-s-t})}.
\]
By \eqref{quartic-eq},
\[
(a^2+t)(a-1)^2=a^2(1+s+t),
\]
so
\[
t'=\frac{t(1+s+t)}{s^3}.
\]
Therefore,
\begin{equation}\label{theta-fst}
\theta(f_{s,t})=\frac{s^3}{t(1+s+t)}.
\end{equation}
If $s=0$, then $c_2=0$. In this case, $f_{s,t}\sim(X^3+1)/X$ and $\theta(f_{s,t})=0$, so \eqref{theta-fst} also holds.  

We write 
\begin{equation}\label{eq:theta}
\theta(s,t)=\frac{s^3}{t(1+s+t)},
\end{equation}
for all $(s,t)\in{\overline\f}^2$ with $t(1+s+t)\ne0$, including $(s,t)=(-3,1)$. Then we know that if $f_{s,t}\sim f_{s',t'}$, then $\theta(s,t)=\theta(s',t')$.

\begin{rmk}\rm
When $f_{s,t}\sim f_{s',t'}$, one can also verify that $\theta(s,t)=\theta(s',t')$ through direct computation. Let $\phi=\left[\begin{smallmatrix}a&b\cr c&d\end{smallmatrix}\right]\in\pgl(2,\overline\f_q)$ be such that $\mathcal S(f_{s,t}\circ\phi)=\mathcal S(f_{s',t'})$. If $\phi\in S_3$, one uses Table~\ref{tb2}; if $\phi\notin S_3$, one uses \eqref{st} and \eqref{s'=,t'=}.
\end{rmk}

Let $\Omega=\{(s,t)\in\f_q^2:t(1+s+t)\ne 0\}$, and define
\begin{align*}
\Theta_{\rm a}=\,&\{\theta(s,t): (s,t)\in\Omega,\ \text{$f_{s,t}$ is in Class III-a}\}\cr
=\,&\{\theta(s,t): (s,t)\in\Omega,\ \text{$G_{s,t}$ is irreducible over $\f_q$}\},
\end{align*}
\begin{align*}
\Theta_{\rm b}=\,&\{\theta(s,t): (s,t)\in\Omega,\ \text{$f_{s,t}$ is in Class III-b}\}\cr
=\,&\{\theta(s,t): (s,t)\in\Omega,\ \text{$G_{s,t}$ is a product of two different}\cr
&\text{irreducible quadratics over $\f_q$}\},
\end{align*}
\begin{align*}
\Theta_{\rm c}=\,&\{\theta(s,t): (s,t)\in\Omega,\ \text{$f_{s,t}$ is in Class III-c}\}\cr
=\,&\{\theta(s,t): (s,t)\in\Omega,\ \text{$G_{s,t}$ is a square of an irreducible quadratic over $\f_q$}\}.
\end{align*}
We will determine $\Theta_a,\Theta_b$ and $\Theta_c$ in the next three subsections for $p\ge 5$.

For polynomials $f(X),g(X)$ over a field, we denote the discriminant of $f$ by $D(f)$ and the resultant of $f$ and $g$ by $\res(f,g)$ or $\res(f,g;X)$.

\begin{lem}\label{lpq}
Let $F$ be a field and $f,g\in F[X]\setminus F$ be monic polynomials. Then
\[
D(fg)=D(f)D(g)\,\text{\rm Res}(f,g)^2.
\]
\end{lem}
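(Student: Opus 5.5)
The identity $D(fg)=D(f)D(g)\,\res(f,g)^2$ is a standard fact. The plan is to prove it by passing to a splitting field and writing everything in terms of roots. Let $\deg f=m$ and $\deg g=n$. Work in a splitting field $E$ of $fg$ over $F$, and write
\[
f=\prod_{i=1}^m(X-\alpha_i),\qquad g=\prod_{j=1}^n(X-\beta_j),\qquad \alpha_i,\beta_j\in E.
\]
For monic polynomials we use the standard root formulas
\[
D(f)=\prod_{1\le i<i'\le m}(\alpha_i-\alpha_{i'})^2,\qquad
\res(f,g)=\prod_{i,j}(\alpha_i-\beta_j).
\]
Both of these are already valid over $F$ as polynomial identities in the coefficients, so it suffices to verify the claim in $E$. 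No separability assumption is needed: repeated roots simply make the corresponding factors vanish on both sides.

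Next, the roots of $fg$ are $\gamma_1,\dots,\gamma_{m+n}$, namely $\alpha_1,\dots,\alpha_m$ followed by $\beta_1,\dots,\beta_n$. We expand
\[
D(fg)=\prod_{k<l}(\gamma_k-\gamma_l)^2
\]
by sorting the unordered pairs $\{k,l\}$ into three groups:
\begin{itemize}
\item pairs with both indices among the $\alpha$'s, which contribute $D(f)$;
\item pairs with both indices among the $\beta$'s, which contribute $D(g)$;
\item mixed pairs, which contribute $\prod_{i,j}(\alpha_i-\beta_j)^2=\res(f,g)^2$.
\end{itemize}
Because each factor appears squared, the order within a mixed pair does not matter, so there is no sign issue. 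Multiplying the three contributions gives the identity.

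The only point requiring care is normalization. If $D$ is defined via $\res(f,f')$, it carries the sign factor $(-1)^{m(m-1)/2}$. Under that convention we must check that the signs match:
\[
(-1)^{(m+n)(m+n-1)/2}=(-1)^{m(m-1)/2}\,(-1)^{n(n-1)/2}\,(-1)^{mn},
\]
which holds since $\binom{m+n}{2}=\binom m2+\binom n2+mn$. The extra $(-1)^{mn}$ is absorbed because the resultant enters squared. We therefore adopt the root-product convention and add this remark to cover the other convention.
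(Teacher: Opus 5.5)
Your proof is correct and follows the paper's argument: factor over a splitting field and sort the root pairs of $fg$ into $f$-pairs, $g$-pairs and mixed pairs, with the mixed pairs giving $\res(f,g)^2$. The closing sign remark is unnecessary, because for monic $f$ the convention $D(f)=(-1)^{m(m-1)/2}\res(f,f')$ already equals $\prod_{i<j}(\alpha_i-\alpha_j)^2$; also, the $(-1)^{mn}$ is not ``absorbed by the square'' but cancels against $\res(g,f)=(-1)^{mn}\res(f,g)$, so under the unsigned convention $D(f)=\res(f,f')$ the identity genuinely fails (e.g.\ $f=X$, $g=X-1$ in characteristic $\ne 2$).
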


\begin{proof} Write $f=\prod_{i=1}^m(X-u_i)$ and $g=\prod_{j=1}^n(X-v_j)$, where $u_i,v_j\in\overline F$. We have 
\[
D(fg)=D(f)D(g)\Bigl(\prod_{1\le i\le m,\, 1\le j\le n}(u_i-v_j)\Bigr)^2,
\]
where
\[
\prod_{1\le i\le m,\, 1\le j\le n}(u_i-v_j)=\res(f,g).
\]
\end{proof}

\begin{lem}\label{disc}
Let $q$ be odd and $f\in\f_q[X]$ be irreducible of degree $n$. Then
\[
D(f)\ \text{\rm is}\begin{cases}
\text{\rm a nonzero square in $\f_q$} &\text{\rm if $n$ is odd},\cr
\text{\rm a nonsquare in $\f_q$} &\text{\rm if $n$ is even}.
\end{cases}
\]
\end{lem}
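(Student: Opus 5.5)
The plan is to express the discriminant through the roots and see how the Frobenius map acts on its square root. Since $f$ is irreducible over the finite field $\f_q$, it is separable, so its roots in $\f_{q^n}$ are distinct. They can be written as $r, r^q, \dots, r^{q^{n-1}}$, and we set $r_i = r^{q^{i-1}}$. Put
\[
\delta=\prod_{1\le i<j\le n}(r_i-r_j)\in\f_{q^n}^*,
\]
so that $D(f)=c\,\delta^2$. Here $c$ is a power of the leading coefficient $a$ of $f$, namely $c=a^{2n-2}$, and this is a square in $\f_q^*$. We may therefore assume $f$ is monic, so that $D(f)=\delta^2$. Since $D(f)$ is a symmetric function of the roots, it lies in $\f_q$, and it is nonzero because the roots are distinct.

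Let $\sigma:x\mapsto x^q$ be the Frobenius map. It permutes the roots cyclically: $r_i\mapsto r_{i+1}$, with indices taken modulo $n$. Applying $\sigma$ to $\delta$ permutes the factors $r_i-r_j$ up to sign, giving $\sigma(\delta)=\operatorname{sgn}(\pi)\,\delta$, where $\pi$ is the $n$-cycle. This is the standard fact that $\prod_{i<j}(x_{\pi(i)}-x_{\pi(j)})=\operatorname{sgn}(\pi)\prod_{i<j}(x_i-x_j)$. The sign of an $n$-cycle is $(-1)^{n-1}$.

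Since $q$ is odd, $\delta\neq0$ implies $-\delta\neq\delta$. So $\delta\in\f_q$ exactly when $\sigma(\delta)=\delta$, that is, exactly when $n$ is odd. The remaining step is to connect this with squareness. The element $D(f)=\delta^2\in\f_q^*$ is a square in $\f_q$ if and only if one of its square roots $\pm\delta$ lies in $\f_q$. Because $\pm\delta$ are its only square roots in $\overline\f_q$, this happens if and only if $\delta\in\f_q$. Hence $D(f)$ is a nonzero square when $n$ is odd and a nonsquare when $n$ is even.

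There is no real obstacle in this argument. The only points needing care are the sign of the permutation action on $\delta$ and the reduction to the monic case. For even $n$ the factor $a^{2n-2}$ is a square, so it cannot change whether $D(f)$ is a square. An alternative would be Stickelberger's theorem, but the Frobenius argument above is self-contained.
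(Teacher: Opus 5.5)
Your proof is correct and takes essentially the same approach as the paper. The paper simply observes that the Galois group of $f$ over $\f_q$ is cyclic of order $n$ (generated by Frobenius acting as an $n$-cycle on the roots), and that it lies in $A_n$ if and only if $n$ is odd; you make this explicit by checking $\sigma(\delta)=\mathrm{sgn}(\pi)\,\delta=(-1)^{n-1}\delta$ and handling the leading coefficient $a^{2n-2}$.
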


\begin{proof}
The Galois group of $f$ over $\f_q$ is cyclic of order $n$, which is contained in $A_n$ if and only if $n$ is odd. Hence the conclusion.
\end{proof}

\begin{prop}\label{P-Q}
Let $f\in\f_q[X]$ be a quartic polynomial.
\begin{itemize}
\item[(i)] $D(f)$ is a nonsquare in $\f_q$ if and only if  $f$ is irreducible over $\f_q$ or is a product of two distinct linear polynomials and an irreducible quadratic over $\f_q$.

\item[(ii)] $D(f)$ is a nonzero square in $\f_q$ if and only if $f$ is a product of two distinct irreducible quadratics over $\f_q$ or a product of a linear polynomial and an irreducible cubic or a product of four distinct linear polynomials over $\f_q$.

\item[(iii)] $D(f)=0$ if and only if $f$ not separable.
\end{itemize}
\end{prop}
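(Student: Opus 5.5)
Part (iii) is the standard fact that the discriminant vanishes exactly when $f$ has a repeated root in $\overline{\f}_q$, i.e., when $f$ is not separable. For (i) and (ii) we may therefore assume $f$ is separable, so $D(f)\ne 0$. Since $D(cf)=c^{2n-2}D(f)=c^{6}D(f)$ for $c\in\f_q^*$ and $n=4$, the square class of $D(f)$ does not change on passing to the monic associate. We may therefore assume $f$ is monic, with $q$ odd as in Lemma~\ref{disc}.

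Factor $f=f_1\cdots f_k$ into distinct monic irreducibles over $\f_q$. Iterating Lemma~\ref{lpq} gives
\[
D(f)=\prod_{i}D(f_i)\prod_{i<j}\res(f_i,f_j)^2 .
\]
Each resultant is a nonzero element of $\f_q$ by separability, so modulo squares $D(f)\equiv\prod_i D(f_i)$. For linear factors we use the convention $D(f_i)=1$, which is consistent with Lemma~\ref{lpq}. By Lemma~\ref{disc}, $D(f_i)$ is a nonsquare exactly when $\deg f_i$ is even. Hence $D(f)$ is a nonsquare if and only if the number of even-degree irreducible factors is odd.

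It remains to list the separable factorization patterns of a quartic: $4$, $3+1$, $2+2$, $2+1+1$, $1+1+1+1$. In a separable $f$ the factors are distinct, so the pattern $2+2$ has distinct quadratics and the pattern $1^4$ has four distinct linear factors. The patterns $4$ and $2+1+1$ have exactly one even-degree factor, giving a nonsquare; this is (i). The patterns $2+2$, $3+1$ and $1^4$ have an even number (two or zero) of even-degree factors, giving a nonzero square; this is (ii). Because the five patterns exhaust all separable quartics and split as shown, both "if and only if" statements follow.

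There is no real obstacle here. The only point requiring care is the bookkeeping above: the normalization to monic $f$, the convention for linear factors, and the requirement that $q$ be odd so that Lemma~\ref{disc} applies.
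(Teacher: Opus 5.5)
Your proof is correct and follows the paper's route: the paper simply states that the proposition follows from Lemma~\ref{lpq} (the discriminant is multiplicative up to squared resultants) and Lemma~\ref{disc} (an irreducible factor contributes a nonsquare exactly when its degree is even), and you carry out that bookkeeping, correctly making explicit the tacit hypothesis that $q$ is odd. One point you use without stating: in the ``if'' directions, each listed factorization is automatically separable, since irreducible polynomials over the perfect field $\f_q$ have distinct roots and distinct monic irreducibles are coprime.
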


\begin{proof}
The conclusions follow from the above two lemmas.
\end{proof}

For $(s,t)\in\Omega$, we have
\begin{align}\label{Dfst}
D(f_{s,t})\,& =16 t (1+s+t) (s^3-27 t-27 s t-27 t^2)\\
&=16 t^2(1+s+t)^2\Bigl(\frac{s^3}{t(1+s+t)}-27\Bigr)\cr
&=16 t^2(1+s+t)^2(\theta(s,t)-27).\nonumber
\end{align}

\begin{cor}\label{C-tha}
We have
\begin{equation}\label{tha1}
\Theta_{\rm a}\subset\{27+a: \text{\rm $a$ is a nonsquare in $\f_q$}\},
\end{equation}
\begin{equation}\label{tha2}
\Theta_{\rm b}\subset\{27+a: \text{\rm $a$ is a nonzero square in $\f_q$}\},
\end{equation}
\begin{equation}\label{tha3}
\Theta_{\rm c}\subset\{27\}.
\end{equation}
\end{cor}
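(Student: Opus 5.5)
The plan is to read off the square class of $\theta(s,t)-27$ from the discriminant of $G_{s,t}$, and then apply Proposition~\ref{P-Q} to the factorization pattern that defines each subclass. Throughout, $q$ is odd; indeed $p\ge 5$ in this part of the section, and in any case Classes III-a and III-b force $p>2$. I take the quantity $D(f_{s,t})$ in \eqref{Dfst} to mean the discriminant of the monic quartic $G_{s,t}$, whose roots are the ramification points of $f_{s,t}$.

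\textbf{Step 1.} For $(s,t)\in\Omega$ we have $t(1+s+t)\ne 0$, and $16=4^2$ is a nonzero square in $\f_q$ because $q$ is odd. Hence $16t^2(1+s+t)^2$ is a nonzero square in $\f_q$. By \eqref{Dfst},
\[
\theta(s,t)-27=\frac{D(G_{s,t})}{16t^2(1+s+t)^2},
\]
so $\theta(s,t)-27$ is a nonsquare, a nonzero square, or zero exactly when $D(G_{s,t})$ is.

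\textbf{Step 2.} Apply Proposition~\ref{P-Q} to each subclass.
\begin{itemize}
\item[(a)] In Class III-a, $G_{s,t}$ is irreducible over $\f_q$. By part (i), $D(G_{s,t})$ is a nonsquare, so by Step 1 we get $\theta(s,t)=27+a$ with $a$ a nonsquare. This gives \eqref{tha1}.
\item[(b)] In Class III-b, $G_{s,t}$ is a product of two distinct irreducible quadratics. By part (ii), $D(G_{s,t})$ is a nonzero square, so $\theta(s,t)-27$ is a nonzero square. This gives \eqref{tha2}.
\item[(c)] In Class III-c, $G_{s,t}$ is the square of an irreducible quadratic, so it is not separable. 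By part (iii), $D(G_{s,t})=0$, hence $\theta(s,t)=27$. This gives \eqref{tha3}.
\end{itemize}

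\textbf{Step 3 (the exceptional pair).} The only point needing care is that $\theta$ is defined by formula \eqref{eq:theta} even at $(s,t)=(-3,1)$, where the invariant interpretation via Theorem~\ref{thm:(-3,1)} does not apply. However, the corollary uses only the formula together with \eqref{Dfst}, which is a polynomial identity valid on all of $\Omega$, so no separate argument is required. As a consistency check, $\theta(-3,1)=-27/(1\cdot(-1))=27$, and $G_{-3,1}=(X^2-X+1)^2$ indeed falls under III-c or III-d.

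I do not expect a genuine obstacle. The only substantive input is the discriminant computation \eqref{Dfst}, which can be verified by a routine symbolic calculation of $D(X^4-2X^3-sX^2-2tX+t)$.
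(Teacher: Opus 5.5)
Your proof is correct and follows the paper's argument: the paper's proof is the one line that the corollary follows from Proposition~\ref{P-Q} and \eqref{Dfst}, and your Steps~1--2 spell out what that line leaves implicit, namely that $D(f_{s,t})$ means the discriminant of $G_{s,t}$ and that $16t^2(1+s+t)^2$ is a nonzero square. You are right to state the odd-$q$ hypothesis explicitly, because it is genuinely needed: for even $q$ one has $16=0$, and Theorem~\ref{thm:p=2} then yields functions with $G_{s,t}$ the square of an irreducible quadratic and $\theta\ne 27$.
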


\begin{proof}
This follows from Proposition~\ref{P-Q} and \eqref{Dfst}.
\end{proof}

For the rest of Section~\ref{sec6}, we assume $p=\ch\f_q\ge 5$.

\subsection{Class III-a}\label{sec6.4}\

For each $F(X,Y)\in\f_q[X,Y]$, we write $V(F)=\{(x,y)\in\f_q^2:F(x,y)=0\}$.
The main result of this subsection is the following theorem:

\begin{thm}\label{C-tha1}
Assume $p\ge 5$. Then 
\begin{equation}\label{thaa}
\Theta_{\rm a}=\{27+a: \text{\rm $a$ is a nonsquare in $\f_q$}\},
\end{equation}
and $|\Theta_a|=(q-1)/2$.
\end{thm}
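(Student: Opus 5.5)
The inclusion ``$\subset$'' in \eqref{thaa} is \eqref{tha1} of Corollary~\ref{C-tha}. Once equality is proved, $|\Theta_{\rm a}|=(q-1)/2$ is immediate, since $\f_q$ contains $(q-1)/2$ nonsquares. So the work is the reverse inclusion. Fix a nonsquare $a\in\f_q$ and put $\theta_0=27+a$. I must produce $(s,t)\in\Omega$ with $\theta(s,t)=\theta_0$ and $G_{s,t}$ irreducible over $\f_q$.

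\emph{Step 1: the level curve.} Consider the plane cubic $E_{\theta_0}: s^3=\theta_0\,t(1+s+t)$. For every point $(s,t)\in\Omega\cap E_{\theta_0}$, \eqref{Dfst} gives
\[
D(G_{s,t})=16t^2(1+s+t)^2(\theta_0-27),
\]
which is a nonsquare. By Proposition~\ref{P-Q}(i), $G_{s,t}$ is then either irreducible or a product of two distinct linear factors and an irreducible quadratic. I will check the following facts about $E_{\theta_0}$:
\begin{itemize}
\item[(a)] $E_{\theta_0}$ is absolutely irreducible. It is a cubic, and the linear term $-\theta_0 t$ at the origin shows it is nonsingular there; I expect it to have at most one singular point.
\item[(b)] It meets $t(1+s+t)=0$ only in $O(1)$ affine points, namely those with $s=0$.
\end{itemize}
Hasse--Weil then gives $|\Omega\cap E_{\theta_0}|\ge q-2\sqrt q-c$ for an absolute constant $c$.

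\emph{Step 2: bounding the bad points.} Call a point of $\Omega\cap E_{\theta_0}$ bad if $G_{s,t}$ has a root in $\f_q$. At a bad point $G_{s,t}$ has exactly two distinct roots in $\f_q$. The bad points split into two kinds.
\begin{itemize}
\item[(i)] Bad points with a root $u$ satisfying $u^3=-t$. The computation in the proof of Theorem~\ref{thm:(-3,1)} forces $(X-u)^2\mid G_{s,t}$ in this situation. That contradicts separability, which we have because $D(G_{s,t})\ne0$. So this kind does not occur.
\item[(ii)] Bad points with a root $x$ satisfying $x^3\ne -t$. By Theorem~\ref{Theorem 2}, such $(s,t)$ come from pairs $(x,b)\in\f_q^2$ with $x\ne b$, $t=-x^2b$ and $s=-2x-b+x^2+2xb$. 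Moreover $1+s+t=(1-x)^2(1-b)$.
\end{itemize}
Pulling $E_{\theta_0}$ back to the $(x,b)$-plane gives the sextic
\[
H: (x^2+2xb-2x-b)^3=-\theta_0\,x^2b\,(1-x)^2(1-b).
\]
Each bad point has two rational roots of $G_{s,t}$, so it yields at least two points of $H(\f_q)$. I will bound $|H(\f_q)|\le q+C\sqrt q$, either by factoring $H$ into absolutely irreducible components and applying Hasse--Weil to each, or via the upper bound for the degree-4 cover $\{(s,t,x):G_{s,t}(x)=0\}\to E_{\theta_0}$. This gives at most $(q+C\sqrt q)/2$ bad points.

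\emph{Step 3: conclusion and the main obstacle.} Subtracting, the number of good points is at least $q/2-C'\sqrt q$. This is positive for $q$ beyond an explicit bound. The remaining small $q$ with $p\ge5$ I would dispose of by direct computer check, as in Appendix~B. I expect the main obstacle to be Step~2, specifically controlling $|H(\f_q)|$ uniformly in $\theta_0$. The sextic $H$ may be reducible or have many singularities, which would inflate the genus constant. Since the argument only needs a constant multiple of $\sqrt q$, I would first handle this by computing the components of $H$ (and of $E_{\theta_0}$) symbolically and bounding their genera via degree.
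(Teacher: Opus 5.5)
\textbf{There is a genuine gap at the crux.} Your scheme follows the paper's: a Hasse--Weil lower bound for the level cubic, a curve parametrizing bad points with fibres of size $\ge 2$, and a Hasse--Weil upper bound for that curve. However, the decisive estimate $|H(\f_q)|\le q+C\sqrt q$ is only announced, and your stated reason for expecting it is mistaken. The count needs the coefficient of $q$ to be exactly $1$. That is, $H$ must have only one absolutely irreducible component defined over $\f_q$, with any others contributing $O(1)$ points.

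Reducibility does not merely ``inflate the genus constant''. Two $\f_q$-rational components would give $|H(\f_q)|\approx 2q$, hence possibly $\approx q$ bad points, and Step~3 then yields nothing. This is a real phenomenon here. The paper's analogous curve $H_b(x,y)$, built from ordered pairs of rational roots of $G_{s,t}$, does split: $H_b=\prod_i P_{c_i}(x+y,xy)$ over the roots $c_i$ of $\alpha(X)=8-(15+a)X+6X^2+X^3$. The nonsquare hypothesis is used exactly to see that $D(\alpha)=4a(27+a)^2$ is a nonsquare, so only one $c_i$ lies in $\f_q$. The two conjugate components have only $6$ rational points, and the remaining one, $P_{c_1}(X+Y,XY)$, still has to be proved absolutely irreducible (Step~5 of the paper).

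\textbf{How to close it in your coordinates.} In your $(x,b)$-coordinates the gap can be closed more cheaply, which would make your route a genuine simplification of the paper's.
\begin{enumerate}[(1)]
\item For $\theta_0\notin\{0,27\}$, the curve $E_{\theta_0}$ is a smooth cubic, hence of genus $1$.
\item Your sextic $Q(x,b)=(x^2-2x+(2x-1)b)^3+\theta_0x^2(1-x)^2b(1-b)$ is cubic in $b$. It is primitive over $\overline{\f_q}[x]$, since its $b^3$- and $b^0$-coefficients $(2x-1)^3$ and $x^3(x-2)^3$ are coprime for $p\ge5$.
\item A factorization over $\overline{\f_q}$ would therefore give a root $b=r(x)\in\overline{\f_q}(x)$. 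This yields a rational map $\Bbb P^1\to E_{\theta_0}$, $x\mapsto(s,t)$, which must be constant, say $(s_0,t_0)$.
\item Then $r=-t_0/x^2$ forces $G_{s_0,t_0}(x)\equiv0$, which is absurd. So $H$ is absolutely irreducible.
\item The Hasse--Weil bound for singular plane curves gives $|H(\f_q)|\le q+1+20\sqrt q$. The number of good points is then at least $(q-24\sqrt q-5)/2$, which is positive for $q>586$. Below that you need a computer check, over a larger range than the paper's $q\le103$.
\end{enumerate}

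\textbf{The case $\theta_0=0$.} You also overlooked $\theta_0=0$, which is required whenever $-27$ is a nonsquare, i.e.\ $q\equiv2\pmod 3$. There $E_0$ is the triple line $s^3=0$, so your claim (a) fails. The paper treats this case separately by a direct count. In your set-up it is equally easy: $H$ reduces to the conic $x^2-2x+(2x-1)b=0$, which has $q-1$ affine points. Hence there are at most $(q-1)/2$ bad points, while there are $q-2$ points $(0,t)\in\Omega$.
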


\begin{proof}
Let $a$ be a nonsquare in $\f_q$ and let $b=27+a$. We want to show that $b\in\Theta_{\rm a}$.

First assume $b=0$. (This happens only when $a=-27$ is a nonsquare in $\f_q$.) To show that $0\in\Theta_{\rm a}$, we show that there exists $t\in\f_q\setminus\{0,-1\}$ such that $G_{0,t}$ is irreducible over $\f_q$. Assume to the contrary that for all $t\in\f_q\setminus\{0,-1\}$, $G_{0,t}$ is not irreducible over $\f_q$. Since $D(G_{0,t})=16t^2(1+t)^2a$ is a nonsquare in $\f_q$, by Proposition~\ref{P-Q} (i), $G_{0,t}$ has a root $x_t\in\f_q$. It follows that
\[
t=\frac{x_t^3(x_t-2)}{2x_t-1},\quad x_t\ne 1/2,\, 0,\, 2.
\]
This gives at most $q-3$ values of $t$, which is a contradiction.

Now assume $b\ne 0$. Assume to the contrary that $b\notin\Theta_{\rm a}$. We will arrive at a contradiction through several steps.

\medskip

{\bf Step 1.} The polynomials $F_b$ and $H_b$.\

Define
\begin{equation}\label{Fb}
F_b(S,T)=S^3-bT(1+S+T)\in\f_q[S,T].
\end{equation}
Let $(s,t)\in V(F_b)\setminus\{s=0\}$. We have
\begin{equation}\label{=b}
\theta(s,t)=\frac{s^3}{t(1+s+t)}=b.
\end{equation}
Since $b\notin\Theta_{\rm a}$, $G_{s,t}$ is not irreducible over $\f_q$. Since $D(f_{s,t})$ is a nonsquare in $\f_q$, by Proposition~\ref{P-Q} (i), $G_{s,t}$ is a product of two distinct linear polynomials and an irreducible quadratic over $\f_q$. Let $x,y$ be the roots of $G_{s,t}$ in $\f_q$. Then 
\begin{equation}\label{sys}
\begin{cases}
x^2s+(2x-1)t=x^4-2x^3,\cr
y^2s+(2y-1)t=y^4-2y^3.
\end{cases}
\end{equation}
In the above, we have $x\ne 0$ and $y\ne0$. (Otherwise, $t=0$ and it follows from $F_b(s,t)=0$ that $s=0$, which is a contradiction.) We claim that
\begin{equation}\label{det}
\det\left[\begin{matrix} x^2&2x-1\cr y^2& 2y-1\end{matrix}\right]=(x - y) (-x - y + 2 x y)\ne 0.
\end{equation}
Otherwise, since \eqref{sys} has a solution for $(s,t)$, we have
\[
\det\left[\begin{matrix} x^2&x^4-2x^3\cr y^2& y^4-2y^3\end{matrix}\right]=-x^2 y^2 (x - y) (-2 + x + y)= 0.
\]
Thus $-2 + x + y=0$. Solving
\[
\begin{cases}
-2 + x + y=0,\cr
-x - y + 2 x y=0
\end{cases}
\]
gives $(x,y)=(1,1)$, which is a contradiction since $x\ne y$. Therefore \eqref{det} is proved. Now \eqref{sys} has a unique solution
\begin{equation}\label{st=}
\begin{cases}
s=\displaystyle\frac{2 x^2 - x^3 + 2 x y - 5 x^2 y + 2 x^3 y + 2 y^2 - 5 x y^2 + 
 2 x^2 y^2 - y^3 + 2 x y^3}{-x-y+2 x y},\vspace{0.4em}\cr
t=-\displaystyle\frac{x^2 y^2 (-2 + x + y)}{-x-y+2 x y}.
\end{cases}
\end{equation}
Making the substitutions \eqref{st=} in \eqref{=b} gives 
\[
0=F_b(s,t)=\frac{H_b(x,y)}{(-x-y+2xy)^3},
\]
where
\begin{align}\label{Hb}
&H_b(X,Y)=\\
&-b (-1+X)^2 X^2 (-1+Y)^2 Y^2 (-2+X+Y) (X+Y) (-X-Y+2 X Y)\cr
&+(2 X^2-X^3+2 X Y-5 X^2 Y+2 X^3 Y+2 Y^2-5 X Y^2+2 X^2 Y^2-Y^3+2 X Y^3)^3.\nonumber
\end{align}
Therefore $H_b(x,y)=0$. 

In \eqref{Hb}, write
\[
N(X,Y)=2 X^2-X^3+2 X Y-5 X^2 Y+2 X^3 Y+2 Y^2-5 X Y^2+2 X^2 Y^2-Y^3+2 X Y^3,
\]
and note that the numerator of $s$ in \eqref{st=} is $N(x,y)$. For $(x,y)\in V(H_b)$, $N(x,y)\ne0$ implies $-x-y+2 x y\ne0$. Let
\[
\Gamma=\{(x,y)\in V(H_b):  N(x,y)=0\}.
\]
Define
\begin{equation}\label{eq:Phi}
\begin{array}{cccl}
\Phi:&V(H_b)\setminus\Gamma&\longrightarrow&V(F_b)\setminus\{s=0\}=V(F_b)\setminus\{(0,0),(0,-1)\}\cr
&(x,y)&\xmapsto{\hspace{1em}} &(s,t),
\end{array}
\end{equation}
where $s,t$ are given by \eqref{st=}. We have shown that $\Phi$ is an onto map. The system
\[
\begin{cases}
H_b(x,y)=0,\cr
N(x,y)=0
\end{cases}
\]
is equivalent to 
\[
\begin{cases}
N(x,y)=0,\cr
x y(-1+x)(-1+y)(-2+x+y)(x+y)(-x-y+2 x y)=0,
\end{cases}
\]
whose solution set is $\{(0,0),(1,1),(0,2),(2,0),(1,-1),(-1,1)\}$. Therefore,
\[
\Gamma=\{(0,0),(1,1),(0,2),(2,0),(1,-1),(-1,1)\}.
\]
For each $(s,t)\in V(F_b)\setminus\{s=0\}$, $|\Phi^{-1}(s,t)|\ge 2$ since $(x,y)\in\Phi^{-1}(s,t)$ implies $x\ne y$ and $(y,x)\in\Phi^{-1}(s,t)$. Therefore by \eqref{eq:Phi},
\[
|V(F_b)|-2\le \frac 12(|V(H_b)|-|\Gamma|)=\frac 12|V(H_b)|-3,
\]
i.e., 
\begin{equation}\label{VFb}
|V(F_b)|\le\frac12|V(H_b)|-1.
\end{equation}

\medskip

{\bf Step 2.} Estimate for $|V(F_b)|$.\

\medskip
First, $F_b(S,T)=-bT^2-b(1+S)T+S^3\in \overline\f_q[S,T]$ is irreducible since 
$(-b(1+S))^2+4bS^3$ is not a square in $\overline\f_q[S]$. Then by the Hasse-Weil bound,
\begin{equation}\label{vfb}
|V(F_b)|\ge q+1-(3-1)(3-2)q^{1/2}-1=q-2q^{1/2}.
\end{equation}
(The term $-1$ represents the point at $\infty$ on the curve $F_b(S,T)=0$.)

\medskip

{\bf Step 3.} Factorization of $H_b$.\

Let 
\[
\alpha(X)=8-(15+a)X+6X^2+X^3\in\f_q[X].
\]
Since 
\[
D(\alpha)=4a(27+a)^2
\]
is nonsquare in $\f_q$, by \cite{Dickson-BAMS-1906}, $\alpha$ is a product of a linear polynomial and an irreducible quadratic polynomial in $\f_q[X]$. (This fact also follows from Lemma~\ref{disc}.)

Let $s_1=X+Y$ and $s_2=XY$. We have
\[
H_b(X,Y)=K(s_1,s_2),
\]
where
\begin{dmath*}
K(X,Y)=8 X^6-12 X^7+6 X^8-X^9-24 X^4 Y+42 X^6 Y-30 X^7 Y+6 X^8 Y-30 X^2 Y^2-2 a X^2 Y^2+171 X^3 Y^2+5 a X^3 Y^2-180 X^4 Y^2-4 a X^4 Y^2+15 X^5 Y^2+a X^5 Y^2+42 X^6 Y^2-12 X^7 Y^2-8 Y^3+84 X Y^3+4 a X Y^3-330 X^2 Y^3-14 a X^2 Y^3+442 X^3 Y^3+14 a X^3 Y^3-180 X^4 Y^3-4 a X^4 Y^3+8 X^6 Y^3-24 Y^4+168 X Y^4+8 a X Y^4-330 X^2 Y^4-14 a X^2 Y^4+171 X^3 Y^4+5 a X^3 Y^4-24 X^4 Y^4-24 Y^5+84 X Y^5+4 a X Y^5-30 X^2 Y^5-2 a X^2 Y^5-8 Y^6.
\end{dmath*}
Let $c$ be any root of $\alpha(X)$. We found that
\begin{equation}\label{fac}
K(X,Y)=P_c\cdot (B_2+\cdots+B_6),
\end{equation}
where
\begin{equation}\label{pc}
P_c=cY+2X^2-(4+c)XY+cY^2-X^2(X-2Y),
\end{equation}
\[
B_2=-\frac 8cY^2,
\]
\[
B_3=-\frac 2cY\bigl[c(c+6)X^2-2(4+6c+c^2)XY+8Y^2 \bigr],
\]
\[
B_4=\frac 1c\bigl[4cX^4+c(2+3c)X^3Y-8(1+4c+c^2)X^2Y^2+4(4+6c+c^2)XY^3-8Y^4\bigr],
\]
\[
B_5=-X^2(X-2Y)\bigl[4X^2+(c-2)XY-(c+6)Y^2\bigr],
\]
\[
B_6=X^4(X-2Y)^2.
\]
The factorization~\eqref{fac} was found through computer experiment. The process is difficult to describe here but the result is easily verified; see Appendix~\ref{app-b}.

Let the roots of $\alpha$ be $c_1,c_2,c_3$, where $c_1\in\f_q$, $c_2,c_3\in\f_{q^2}\setminus\f_q$, $c_3=c_2^q$. Clearly, $P_{c_1}, P_{c_1}, P_{c_3}$ are pairwise relatively prime, hence $P_{c_1}P_{c_2}P_{c_3}\mid K$. Comparing the coefficients of $Y^6$ and noting that $c_1c_2c_3=-8$, we get
\[
K=P_{c_1}P_{c_2}P_{c_3}.
\]
Therefore,
\begin{equation}\label{c1c2c3}
H_b(X,Y)=P_{c_1}(s_1,s_2)P_{c_2}(s_1,s_2)P_{c_3}(s_1,s_3).
\end{equation}

\medskip

{\bf Step 4.} Determination of $V(P_{c_2}(s_1,s_2)P_{c_3}(s_1,s_3))$.\

We write \eqref{pc} as 
\[
P_c(s_1,s_2)=(-2+X+Y)(X+Y)(-X-Y+2XY)+cXY(-1+X)(-1+Y).
\]
In the above, set $c=c_1,c_2$, and note that $c_1,c_2\in\f_{q^2}\setminus\f_q$. Therefore, $(x,y)\in V(P_{c_2}(s_1,s_2)P_{c_3}(s_1,s_3))$ if and only if
\[
\begin{cases}
(-2+x+y)(x+y)(-x-y+2xy)=0,\cr
xy(-1+x)(-1+y)=0.
\end{cases}
\]
The solution set of the above system is 
\[
\Gamma=\{(0,0),(1,1),(0,2),(2,0),(1,-1),(-1,1)\}.
\]
Therefore,
\begin{equation}\label{=6}
|V(P_{c_2}(s_1,s_2)P_{c_3}(s_1,s_3))|=6.
\end{equation}
Note: For our purpose, we only need a small constant upper bound for $|V(P_{c_2}(s_1,s_2)P_{c_3}(s_1,s_3))|$, which can be obtained using Bezout's theorem. However, in this case, an explicit determination is easier.

\medskip

{\bf Step 5.} Absolute irreducibility of $P_{c_1}(s_1,s_2)$.\

To simplify the notation, we write $c=c_1$, which is the root of $\alpha(X)$ in $\f_q$. 

We first show that $P_c(X,Y)$ is absolutely irreducible. Assume the contrary. Write \eqref{pc} as
\[
P_c(X,Y)=cY^2+(c-(4+c)X+2X^2)Y+X^2(2-X).
\]
Then 
\[
(c-(4+c)X+2X^2)^2-4c X^2(2-X)
\]
is a square in $\overline\f_q[X]$, say
\[
(c-(4+c)X+2X^2)^2-4c X^2(2-X)-(2X^2+dX+c)^2=0,
\]
where $d\in\overline\f_q$. In the above,
\[
\text{LHS}=X\bigl(-2c(4+c+d)+(16+c^2-d^2)X-4(4+d)X^2 \bigr).
\]
It follows that $4+c+d=0=4+d$, which is impossible.

Now we show that $P_c(X+Y,XY)$ is absolutely irreducible. Assume the contrary. Since $P_c(X,Y)$ is absolutely irreducible, by \cite[Proposition~1.7]{Sze-Thesis-2023}, we have
\begin{equation}\label{LL}
P_c(X+Y,XY)=L(X,Y)L(Y,X),
\end{equation}
where 
\[
L(X,Y)=a_{10}X+a_{01}Y+a_{20}X^2+a_{11}XY+a_{02}Y^2\in\overline\f_q[X,Y].
\]
Write
\[
P_c(X+Y,XY)-L(X,Y)L(Y,X)=\sum_{i,j}c_{ij}X^iY^j,
\]
where
\begin{align*}
c_{20}\,&=2-a_{01} a_{10},\\
c_{30}\,&=-1-a_{02} a_{10}-a_{01} a_{20},\\
c_{40}\,&=-a_{02} a_{20},\\
c_{11}\,&=4 - a_{01}^2 - a_{10}^2 + c,\\
c_{21}\,&=-7 - a_{01} a_{02} - a_{01} a_{11} - a_{10} a_{11} - a_{10} a_{20} - c,\\
c_{31}\,&=2 - a_{02} a_{11} - a_{11} a_{20},\\
c_{02}\,&=c_{20},\\
c_{12}\,&=c_{21},\\
c_{22}\,&=4 - a_{02}^2 - a_{11}^2 - a_{20}^2 + c,\\
c_{03}\,&=c_{30},\\
c_{13}\,&=c_{31},\\
c_{04}\,&=c_{40}.
\end{align*}
Thus $a_{02}a_{20}=0$. Without loss of generality, we may assume $a_{20}=0$. Now
\[
\begin{cases}
c_{20}=2-a_{01}a_{10},\cr
c_{30}=-1-a_{02}a_{10},
\end{cases}
\]
and hence
\[
a_{01}=\frac 2{a_{10}},\quad a_{02}=\frac{-1}{a_{10}}.
\]
Now we have
\[
c_{31}=\frac{2a_{10}+a_{11}}{a_{10}},
\]
whence
\[
a_{11}=-2a_{10}.
\]
It follows that 
\[
c_{21}=\frac 1{a_{10}^2}\bigl(2-(3+c)a_{10}^2+2a_{10}^4\bigr),
\]
\[
c_{22}=\frac{-1}{a_{10}^2}\bigl(1-(4+c)a_{10}^2+4a_{10}^4\bigr).
\]
Therefore,
\[
0=\text{Res}\bigl(2-(3+c)a_{10}^2+2a_{10}^4,\,1-(4+c)a_{10}^2+4a_{10}^4;a_{10}\bigr)=4(-1+c)^2(8+c)^2.
\]
However, from $\alpha(c)=0$, we have
\[
a=\frac{(-1+c)^2(8+c)}c.
\]
We have a contradiction since $a\ne 0$.

\medskip

{\bf Step 6.} Estimate for $|V(P_{c_1}(s_1,s_2))|$.\

Since $P_{c_1}(s_1(X,Y),s_2(X,Y))$ is absolutely irreducible and has degree $4$, by the Hasse-Weil bound, 
\begin{equation}\label{vpc}
|V(P_{c_1}(s_1(X,Y),s_2(X,Y)))|\le q+1+(4-1)(4-2)q^{1/2}-2=q+6q^{1/2}-1.
\end{equation}
(In the above, $4=\deg P_{c_1}(X+Y,XY)$ and $2$ is a lower bound for the number of rational points at $\infty$ on the curve $P_{c_1}(X+Y,XY)=0$.)

\medskip

{\bf Step 7.} Completion of the proof.\

Now by \eqref{c1c2c3}, \eqref{=6} and \eqref{vpc}, we have
\[
|V(H_b)|\le |V(P_{c_1}(s_1,s_2))|+|V(P_{c_2}(s_1,s_2)P_{c_3}(s_1,s_2))|\le q+6q^{1/2}+5.
\]
When $q\ge 107$, by \eqref{vfb}, we have
\[
|V(F_b)|\ge q-2q^{1/2}>\frac 12\bigl(q+6q^{1/2}+5\bigr)-1\ge\frac 12|V(H_b)|-1,
\]
which is a contradiction to \eqref{VFb}. When $q\le 103$, Theorem~\ref{C-tha1} is verified by computer.
\end{proof}

\begin{rmk}\rm
Computer experiment indicates that when $a=b-27$ is a nonsquare of $\f_q$, we have
\begin{equation}\label{VHb=VFb-1}
|V(H_b)|=|V(F_b)|+1.
\end{equation}
This would imply that $|V(F_b)|>(|V(H_b)|-1)/2$, a key claim in the above proof. 
Although our proof does not need the precise relation \eqref{VHb=VFb-1}, we are curious if it is true.
\end{rmk}

\begin{rmk}\rm
Theorem~\ref{C-tha1} implies that Class III-a has at least $(q-1)/2$ equivalence classes distinguished by the invariant $\theta$. We will see later that Class III-a has precisely $(q-1)/2$ equivalence classes. Therefore, the equivalence classes in Class III-a are completely determined by the invariant $\theta$.
\end{rmk}

\subsection{Class III-b}\

We first determine $\Theta_{\rm b}$, the set of $\theta$ values of functions in Class III-b. 

\begin{thm}\label{thm1}
Assume $p\ge 5$. We have 
\[
\Theta_{\rm b}=27+\mathcal A,
\]
where
\[
\mathcal A=\bigl\{(\mu(\mu^2-9)/(\mu^2-1))^2: \mu\in\f_q\setminus\{0,\pm1,\pm3\}\bigr\}.
\]
\end{thm}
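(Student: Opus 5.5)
We will prove the two inclusions separately, working with an explicit parametrization of the $f_{s,t}$ for which $G_{s,t}$ splits into two irreducible quadratics. In that case all four roots of $G_{s,t}$ lie in $\f_{q^2}\setminus\f_q$. By Theorem~\ref{Theorem 2} over $\f_{q^2}$ together with Lemma~\ref{lem:2/1}, any root $a$ of $G_{s,t}$ with $a^3\ne -t$ gives $s,t$ in the form \eqref{alpha-s-t} with $a,b\in\f_{q^2}$, $b=-t/a^2$. The plan is to exploit the Frobenius: $a^q$ is also a root of $G_{s,t}$, so $s,t$ are invariant under $a\mapsto a^q$. Rather than work with roots, we will use the symmetric-function description from Step~1 of the proof of Theorem~\ref{C-tha1}. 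If $x,y$ are two roots of $G_{s,t}$ (now conjugate over $\f_q$), then \eqref{st=} expresses $s,t$ in terms of $\sigma_1=x+y$ and $\sigma_2=xy$, both in $\f_q$. Then $(x-y)^2=\sigma_1^2-4\sigma_2$ is a nonsquare. The other quadratic factor $G_{s,t}/((X-x)(X-y))$ must also be irreducible and distinct.

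Next we compute $\theta$ on this parametrization. From $H_b(x,y)=0$ and the factorization \eqref{c1c2c3}, the relation $\theta(s,t)=b$ amounts to $P_c(\sigma_1,\sigma_2)=0$ for some root $c$ of $\alpha(X)=8-(15+a)X+6X^2+X^3$, where $a=b-27$. Here $c$ must lie in $\f_q$, since $\sigma_1,\sigma_2\in\f_q$ and by the argument of Step~4 only the points of $\Gamma$ lie on the conjugate factors. Thus $b\in\Theta_{\rm b}$ exactly when there is an $\f_q$-root $c$ of $\alpha$ and an $\f_q$-point $(\sigma_1,\sigma_2)$ on $P_c=0$ satisfying the following: $\sigma_1^2-4\sigma_2$ is a nonsquare, the complementary quadratic is irreducible, and $(s,t)\in\Omega$.

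For the inclusion $\Theta_{\rm b}\subset 27+\mathcal A$, we will show that $\alpha$ must split completely with roots parametrized by $\mu$. Corollary~\ref{C-tha} already gives that $a$ is a nonzero square, so $D(\alpha)=4a(27+a)^2$ is a square and $\alpha$ has $0$ or $3$ roots in $\f_q$. It has at least one root, so all three roots are in $\f_q$. The key step is then to parametrize the cubics $\alpha$ with three $\f_q$-roots. I expect the formula $a=(\mu(\mu^2-9)/(\mu^2-1))^2$ to come from writing $c=(\mu$-rational function$)$ via $a=(c-1)^2(c+8)/c$, taken from Step~5. Requiring $a$ to be a square, say $a=w^2$, gives a genus-0 curve $c(c+8)=(w(c)/(c-1))^2\cdot c^2$, equivalently $c(c+8)=\square$. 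Parametrizing by $c+8=c\,\nu^2$ gives $c=8/(\nu^2-1)$, and a Möbius change $\nu\mapsto\mu$ should produce the stated formula. Conversely, $a\in\mathcal A$ makes $\alpha$ split. The exclusions $\mu\ne0,\pm1,\pm3$ correspond to $a=0$ or $c$ undefined.

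For the reverse inclusion we must produce, for each such $c\in\f_q$, a point on the rational curve $P_c(X,Y)=0$ satisfying the nonsquare and irreducibility conditions. Since $P_c$ is absolutely irreducible of low degree, we will either find an explicit rational parametrization of $P_c=0$ (it is a cubic with a singular point at the origin, so lines through the origin parametrize it), or count points with Hasse--Weil as in Step~6. Inside this family, half the parameter values should make $\sigma_1^2-4\sigma_2$ a nonsquare, minus $O(1)$ bad values. The main obstacle is ensuring that the complementary quadratic factor is also irreducible and not the same as $(X-x)(X-y)$, rather than splitting, which would put $f_{s,t}$ in Class II. I would handle this by showing, via Theorem~\ref{Theorem 2}, that its discriminant differs from $\sigma_1^2-4\sigma_2$ by a square factor, so that it is automatically a nonsquare by Proposition~\ref{P-Q}(ii). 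Failing that, I would use a character-sum/Hasse--Weil count on the fiber product, with small $q$ checked by computer as in Theorem~\ref{C-tha1}.
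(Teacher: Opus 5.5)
Your inclusion $\Theta_{\rm b}\subset 27+\mathcal A$ is correct and takes a different, more conceptual route than the paper. The paper works with the coefficients $(a_0,a_1)$ of one quadratic factor and writes $a_1^2-4a_0=du^2$. It then extracts the square $e^2$ from \eqref{a^2} and eliminates $a_1$ via $\res(C_1,C_2;X)$. You instead reuse Step~3 of Theorem~\ref{C-tha1}. Write $P_c(X,Y)=A+cB$ with $A=X(X-2)(2Y-X)$ and $B=Y(Y-X+1)$. For a conjugate pair $x,y$ you have $B(\sigma_1,\sigma_2)=xy(x-1)(y-1)\ne0$, so the root $c$ with $P_c(\sigma_1,\sigma_2)=0$ is forced to be $-A/B\in\f_q$. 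Then $a=(c-1)^2(c+8)/c$, and squareness of $a$ gives $(c+8)/c=\mu^2$, hence $c=8/(\mu^2-1)$ and $a=(\mu(\mu^2-9)/(\mu^2-1))^2$. No M\"obius change is needed. This also explains the invariant: with $\sigma_1=-a_1$, $\sigma_2=a_0$, one checks that $1+8/c=M(a_0,a_1)$ in \eqref{eq:M}.

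One caveat: \eqref{c1c2c3} was proved only for nonsquare $a$. The same argument works whenever $\alpha$ is separable, i.e.\ $a\ne 0,-27$. In general it follows from the identity $K=A^3-6A^2B-(15+a)AB^2-8B^3=-B^3\alpha(-A/B)$, which also covers $b=0$. Alternatively, treat $b=0$ directly via $\mu=\sqrt{-3}$.

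For the reverse inclusion your plan is in substance the paper's, but your first option fails. The origin is a smooth point of $P_c=0$, since its linear part is $cY$. The curve has genus $1$, not $0$: its discriminant in $Y$ equals $4W^2+c(c+4)W+c^2$ with $W=X(X-2)$. This is squarefree in $X$ unless $c\in\{0,1,-8\}$, which is impossible for a root of $\alpha$ when $a\ne0$. So there is no rational parametrization, and ``about half the points have nonsquare $\sigma_1^2-4\sigma_2$'' must actually be proved. That requires the absolute irreducibility of $P_c(X+Y,XY)$, which is Step~5 of Theorem~\ref{C-tha1} and uses only $a\ne0$. After that, $|V(P_c)|-\frac12|V(P_c(X+Y,XY))|-O(1)>0$ for large $q$.

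The paper avoids this subtraction by applying Hasse--Weil directly to the quadratic twist. Indeed $D(X,Y)=2(\mu^2-1)\,P_c\bigl(-X,(X^2-dY^2)/4\bigr)$ with $c=8/(\mu^2-1)$. So the points of $D=0$ are exactly your points on $P_c=0$ with discriminant $du^2$.

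Finally, your ``main obstacle'' is immediate. By \eqref{Dfst}, $D(G_{s,t})=16t^2(1+s+t)^2a$ is a nonzero square. So by Proposition~\ref{P-Q}(ii), a $G_{s,t}$ with one irreducible quadratic factor is a product of two distinct irreducible quadratics. Theorem~\ref{Theorem 2} plays no role here.
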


\begin{proof}
We assume $q\ge 71$. For $q\le 67$, the claim is verified by computer. We fix a nonsquare $d$ in $\f_q$.

\medskip
Let $\alpha=a^2\in\Theta_{\rm b}-27$, where $a\in\f_q^*$. Then there exists $(s,t)\in\f_q^2$ with $t(1+s+t)\ne 0$ such that
\begin{equation}\label{27+alpha}
\frac{s^3}{t(1+s+t)}=27+\alpha
\end{equation}
and
\begin{equation}\label{Gsta0a1}
G_{s,t}=X^4-2X^3-sX^2-2tX+t=(X^2+a_1X+a_0)(X^2+b_1X+b_0),
\end{equation}
where $a_0,a_1,b_0,b_1\in\f_q$, and
\begin{equation}\label{du2}
a_1^2-4a_0=du^2
\end{equation}
for some $u\in\f_q^*$. It follows from \eqref{Gsta0a1} that
\begin{equation}\label{s=t=}
\begin{cases}
s=-a_0-b_0-a_1b_1,\cr
t=a_0b_0,
\end{cases}
\end{equation}
and
\[
\begin{cases}
a_1+b_1=-2,\cr
a_1b_0+a_0b_1=-2a_0b_0.
\end{cases}
\]
Thus $b_0$ and $b_1$ can be expressed in terms of $a_0$ and $a_1$:
\begin{equation}\label{b0b1}
\begin{cases}
b_0=\displaystyle\frac{a_0(2+a_1)}{2a_0+a_1},\vspace{0.4em}\cr
b_1=-2-a_1.
\end{cases}
\end{equation}
Note that $2a_0+a_1\ne 0$. (Otherwise, we have $2a_0+a_1=0$ and $a_0(2+a_1)=0$. It follows that $(a_0,a_1)=(0,0)$ or $(1,-2)$. In either case, $X^2+a_1X+a_0$ is not irreducible over $\f_q$.)
By \eqref{27+alpha}, \eqref{s=t=} and \eqref{b0b1}, we have
\begin{align}\label{a^2}
a^2\,&=\alpha=\frac{s^3}{t(1+s+t)}-27\\
&=-\frac{(2+2 a_0+a_1) (4 a_0-a_1^2) (a_0+a_0^2+5 a_0 a_1+2 a_1^2+2 a_0 a_1^2+a_1^3)^2}{a_0^2 a_1 (2+a_1) (1+a_0+a_1)^2 (2 a_0+a_1)}. \nonumber
\end{align}
Since $4a_0-a_1^2=-du^2$, it follows from \eqref{a^2} that 
\begin{equation}\label{cod2}
\frac{d(2+2 a_0+a_1)}{a_1 (2+a_1)(2 a_0+a_1)}=e^2
\end{equation}
for some $e\in\f_q^*$. By choosing a suitable sign for $a$, we have
\begin{equation}\label{cod1}
a=\frac{eu(a_0+a_0^2+5 a_0 a_1+2 a_1^2+2 a_0 a_1^2+a_1^3)}{a_0(1+a_0+a_1)}.
\end{equation}
Making the substitution $a_0=(a_1^2-du^2)/4$ in \eqref{cod2} and \eqref{cod1}, we arrive at 
\[
C_1(a_1)=0,
\]
\[
C_2(a_1)=0,
\]
where
\begin{equation}\label{C1}
C_1(X)=\frac 12 (4 d+2 X d+X^2 d-4 X^2 e^2-4 X^3 e^2-X^4 e^2-d^2 u^2+2 X d e^2 u^2+X^2 d e^2 u^2),
\end{equation}
\begin{align}\label{C2}
C_2(X)=\,&\frac1{16} (-4 a X^2 - 4 a X^3 - a X^4 + 36 X^2 e u + 36 X^3 e u + 
   9 X^4 e u + 4 a d u^2 \\
   &+ 4 a X d u^2 + 2 a X^2 d u^2 - 
   4 d e u^3 - 20 X d e u^3 - 10 X^2 d e u^3 - a d^2 u^4 + 
   d^2 e u^5). \nonumber
\end{align}
We find that
\[
\text{Res}(C_1,C_2;X)=16 d^4 (-4+d u^2)^2 (-a+9 e u+a e^2 u^2-e^3 u^3)^4.
\]
Hence
\[
-a+9 e u+a e^2 u^2-e^3 u^3=0,
\]
i.e.,
\[
a=\frac{eu((eu)^2-9)}{(eu)^2-1}.
\]
Therefore, $\alpha=a^2\in\mathcal A$.

\medskip
On the other hand, assume $\alpha\in\mathcal A$, say $\alpha=a^2$, where
\[
a=\frac{\mu(\mu^2-9)}{\mu^2-1}
\]
for some $\mu\in\f_q\setminus\{0,\pm1,\pm3\}$. Write $\mu=eu$, where $e,u\in\f_q^*$. We find that $C_1(X)$ and $C_2(X)$ defined in \eqref{C1} and \eqref{C2} are related by 
\[
C_2(X)=\frac{-eu^3}{e^2u^2-1}C_1(X).
\]
Using the relation $\mu=eu$, we may express $C_1(X)$ in terms of $\mu$ and $u$:
\[
C_1(X)=\frac {-1}{2u^2}D(X,u),
\]
where 
\begin{align}\label{DXY}
D(X,Y)=\,&4 \mu^2 X^2 + 4 \mu^2 X^3 + \mu^2 X^4 - 4 d Y^2 - 2 d X Y^2\\
& -2 d \mu^2 X Y^2 - d X^2 Y^2 - d \mu^2 X^2 Y^2 + d^2 Y^4.\nonumber
\end{align}

We claim that $D(X,Y)$ is absolutely irreducible. Write $D(X,Y)$ in terms of homogeneous parts:
\[
D=D_2+D_3+D_4,
\]
where
\begin{align*}
&D_2=4 (\mu^2 X^2-d Y^2),\cr
&D_3=2 X (2 \mu^2 X^2-d Y^2-d \mu^2 Y^2),\cr
&D_4=(X^2-d Y^2) (\mu^2 X^2-d Y^2).
\end{align*}
Since $\mu^2\ne 0,1$, we have
\begin{equation}\label{gcd=1}
\gcd(D_2,D_3)=1.
\end{equation}
Assume to the contrary that $D$ is reducible over $\f_q$. Because of \eqref{gcd=1},
\[
D=(A_0+A_1)(B_2+B_3)\quad\text{or}\quad D=(A_1+A_2)(B_1+B_2),
\]
where $A_i,B_j\in\overline\f_q[X,Y]$ are homogeneous of degree $i$ and $j$, respectively. 

\medskip
{\bf Case 1.} Assume $D=(A_0+A_1)(B_2+B_3)$.

We may assume $A_0=1$ and $B_2=D_2$. Since $A_1B_3=D_4$, we have $B_3\mid D_4$. It follows that $\gcd(B_2,B_3)\ne 1$, which contradicts \eqref{gcd=1}.

\medskip
{\bf Case 2.} Assume $D=(A_1+A_2)(B_1+B_2)$.  

Since $A_1B_1=D_2=4 (\mu^2 X^2-d Y^2)$, we may assume that 
\begin{align*}
&A_1=\mu X-\delta Y,\cr
&B_1=4(\mu X+\delta Y),
\end{align*}
where $\delta\in\f_{q^2}\setminus\f_q$ is such that $\delta^2=d$. Note that 
\[
A_2B_2=D_4=(X^2-dY^2)(\mu^2X-dY^2),
\]
and $\gcd(A_1,A_2)=1$, $\gcd(B_1,B_2)=1$ because of \eqref{gcd=1}. Therefore, we have 
\begin{align*}
&A_2=\epsilon(X\pm\delta Y)(\mu X+\delta Y),\cr
&B_2= \epsilon^{-1}(X\mp\delta Y)(\mu X-\delta Y).
\end{align*}
where $\epsilon\in{\overline\f_q}\!^*$. It follows that 
\begin{align*}
D_3\,&=A_1B_2+A_2B_1\cr
&=\epsilon^{-1}(X\mp \delta Y)(\mu X-\delta Y)^2+\epsilon(X\pm\delta Y)(\mu X+\delta Y)^2\cr
&=(\epsilon^{-1}+\epsilon)\mu^2X^3+(\epsilon^{-1}-\epsilon)(\mp\delta^3)Y^3+\text{other terms}.
\end{align*}
Therefore,
\[
\begin{cases}
\epsilon^{-1}+\epsilon=4,\cr
\epsilon^{-1}-\epsilon=0.
\end{cases}
\]
In the above, the second equation gives $\epsilon=\pm 1$. Then $\epsilon^{-1}+\epsilon=2$ or $-2$, which are not equal to 4, which is a contradiction. Hence the claim is proved.

Now, by the Hasse-Weil bound,
\[
|V(D)|\ge q+1-(4-1)(4-2)q^{1/2}=q-6q^{1/2}+1.
\]
(Note that the curve $D(X,Y)=0$ has no rational points at $\infty$.) 

Given $(a_1,u)\in V(D)$, we may recover $a_0,b_0,b_1,s,t$ using  \eqref{du2}, \eqref{b0b1}, \eqref{s=t=}. In doing so, we need to require 
\begin{equation}\label{need}
u\ne0,\quad 2a_0+a_1\ne 0,\quad t(1+s+t)\ne 0.
\end{equation}
Under the condition $2a_0+a_1\ne 0$, we have 
\[
t(1+s+t)=\frac{a_0^2 a_1 (2+a_1) (1+a_0+a_1)^2}{(2 a_0+a_1)^2}.
\]
Note that $a_0\ne0$ and $1+a_0+a_1\ne 0$ since $X^2+a_1X+a_0$ is irreducible over $\f_q$. Therefore, the conditions in \eqref{need} are equivalent to
\[
u\ne0,\quad 2a_0+a_1\ne 0,\quad a_1(2+a_1)\ne 0,
\]
which are equivalent to, in terms of $a_1$ and $u$, 
\[
E(a_1,u)\ne 0,
\]
where 
\[
E(X,Y)=Y(2X+X^2-dY^2) X(2+X).
\]
By B\'ezout's theorem,
\[
|V(D)\cap V(E)|\le 4\cdot 5=20.
\]
Hence 
\[
|V(D)\setminus V(E)|=|V(D)|-|V(D)\cap V(E)|\ge q-6q^{1/2}-19>0.
\]
(Recall that we assumed $q\ge 71$.)

Now choose $(a_1,u)\in V(D)\setminus V(E)$, and let $a_0,b_0,b_1,s,t$ be given by \eqref{du2}, \eqref{b0b1} and \eqref{s=t=}. Then
\[
\theta(f_{s,t})=\frac{s^3}{t(1+s+t)}=27+a^2
\]
and 
\[
G_{s,t}=(X^2+a_1X+a_0)(X^2+b_1X+b_0),
\]
where $X^2+a_1X+a_0$ and $X^2+b_1X+b_0$ are irreducible over $\f_q$. Moreover, $\mu^2\ne 9$ implies that $X^2+a_1X+a_0\ne X^2+b_1X+b_0$. Therefore, $\alpha=a^2\in\Theta_{\rm b}-27$.  
\end{proof}

\begin{cor}\label{cor2}
For $p\ge 5$, $|\Theta_{\rm b}|=\lfloor(q-1)/6\rfloor$.
\end{cor}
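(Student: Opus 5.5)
The plan is to count $\mathcal A$ from Theorem~\ref{thm1} directly, using $|\Theta_{\rm b}|=|\mathcal A|$. Put
\[
h(\mu)=\frac{\mu(\mu^2-9)}{\mu^2-1},\qquad g(\mu)=h(\mu)^2 ,
\]
a rational function of degree $6$ over $\f_q$. The idea is to find a subgroup $G<\pgl(2,\f_q)$ of order $6$ with $g\circ\gamma=g$ for all $\gamma\in G$. Then each fiber of $g$ over $\overline\f_q$ is a single $G$-orbit, and $|\mathcal A|$ is the number of $G$-orbits on $\f_q\setminus\{0,\pm1,\pm3\}$.

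\textbf{Step 1: the symmetries.} Clearly $g(-\mu)=g(\mu)$. Take $\sigma(\mu)=(\mu+3)/(1-\mu)$, i.e.\ the matrix $\left[\begin{smallmatrix}1&3\cr -1&1\end{smallmatrix}\right]$. A direct computation should give
\[
\sigma^2-9=\frac{-8\mu(\mu-3)}{(1-\mu)^2},\qquad \sigma^2-1=\frac{8(\mu+1)}{(1-\mu)^2},
\]
and hence $h(\sigma(\mu))=h(\mu)$. This matrix has trace$^2/\det=4/4=1$, so $\sigma$ has order $3$ in $\pgl(2,\f_q)$ since $p\ne3$. Also $\tau\sigma\tau=\sigma^{-1}$ for $\tau(\mu)=-\mu$. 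Thus $G=\langle\sigma,\tau\rangle\cong S_3$ has order $6$ and leaves $g$ invariant.

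\textbf{Step 2: fibers are orbits.} Because $\deg g=6=|G|$, for any $\mu_0\in\Bbb P^1(\overline\f_q)$ the fiber $g^{-1}(g(\mu_0))$ has at most $6$ points and contains the orbit $G\mu_0$. If the orbit has $6$ elements, the fiber equals it. If the orbit is shorter, then $\mu_0$ is a ramification point, and the multiplicities force the fiber to be exactly the orbit, since each point of the orbit has the same multiplicity $6/|G\mu_0|$. So for $\mu,\nu\in\f_q$ we get $g(\mu)=g(\nu)$ if and only if $\nu\in G\mu$.

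\textbf{Step 3: orbit count.} The orbits of $\infty$ and $0$ are $\{\infty,-1,1\}$ and $\{0,3,-3\}$, since $\sigma(\infty)=-1$, $\sigma(-1)=1$, $\sigma(0)=3$ and $\sigma(3)=-3$. So the excluded set $\{0,\pm1,\pm3\}$ together with $\infty$ is exactly these two orbits, and $q-5$ points of $\f_q$ remain.

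Next I locate the points with nontrivial stabilizer.
\begin{itemize}
\item The three involutions of $G$ are conjugate to $\tau$. Their fixed points are therefore the $G$-images of $\{0,\infty\}$, all of which are excluded.
\item The fixed points of $\sigma$ and $\sigma^2$ satisfy $\mu^2=-3$. They lie in $\f_q$ if and only if $q\equiv1\pmod3$, and then $\{\pm\sqrt{-3}\}$ is a single orbit of size $2$.
\end{itemize}
Every other orbit has size $6$. Since $p\ge5$, $q\not\equiv0\pmod 3$. Hence
\[
|\mathcal A|=
\begin{cases}
\dfrac{q-7}6+1=\dfrac{q-1}6&\text{if } q\equiv1\pmod 6,\vspace{0.4em}\cr
\dfrac{q-5}6&\text{if } q\equiv5\pmod 6,
\end{cases}
\]
which equals $\lfloor (q-1)/6\rfloor$ in both cases.

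The main obstacle is Step 1, namely spotting the order-$3$ symmetry $\sigma$ of $h$; once it is found, verifying it is a short computation. Step 2 also needs care at the short orbits, to make sure no point outside $G\mu_0$ can share the value $g(\mu_0)$.
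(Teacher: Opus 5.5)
Your proof is correct, but it takes a different route from the paper's. The paper works with the degree-$3$ map $x\mapsto x(x^2-9)/(x^2-1)$ rather than its square. It notes that the discriminant of $X(X^2-9)-a(X^2-1)$ is $4(27+a^2)^2$. So every value $a$ in the image with $a^2\ne-27$ has exactly three preimages in $\f_q$, since a cubic with a rational root and nonzero square discriminant splits. When $a^2=-27$ the cubic is $(X-a/3)^3$, giving one preimage. This determines the size of the image, and halving it (the map is odd and never takes the value $0$) gives $|\mathcal A|$.

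You instead exhibit the full symmetry group $G\cong S_3$ of $g=h^2$ inside $\pgl(2,\f_q)$ and count orbits. This costs a little more in Step~2. Your claim that points of a short orbit have multiplicity exactly $6/|G\mu_0|$ needs a justification. The cleanest one: $\overline\f_q(g)\subseteq\overline\f_q(\mu)^G$ and both have index $6$ in $\overline\f_q(\mu)$, so the extension is Galois with group $G$, and $G$ acts transitively on each fiber. Alternatively, the only short orbit that matters is $\{\pm\sqrt{-3}\}$, and there the paper's identity with $(X-a/3)^3$ settles it directly.

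In return, your approach explains where the generic fiber size $6$ and the exceptional value come from: the exceptional orbit consists of the fixed points of the order-$3$ element $\sigma$. It also proves the fiber description $\Psi_1(x')=\Psi_1(x)\iff x'\in\{\pm x,\pm\frac{x+3}{x-1},\pm\frac{x-3}{x+1}\}$, which the paper states without derivation when it later counts the pairs $(\theta,\lambda)$.
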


\begin{proof}
Consider the map
\[
\begin{array}{cccc}
\phi:&\f_q\setminus\{0,\pm1,\pm3\}&\longrightarrow &\f_q^*\vspace{0.4em}\cr
&x&\xmapsto{\hspace{1em}} & \displaystyle\frac{x(x^2-9)}{x^2-1}.
\end{array}
\]
For $a\in\f_q$, we have
\[
D\bigl(X(X^2-9)-a(X^2-1)\bigr)=4(27+a^2)^2.
\]
If $a\in\text{im}\,\phi$ is such that $27+a^2\ne 0$, then $X(X^2-9)-a(X^2-1)$ has three roots in $\f_q$, so 
$|\phi^{-1}(a)|=3$.
If $a\in\f_q$ is such that $a^2=-27$, which happens if and only if $q\equiv 1\pmod 3$, i.e., if and only if $q\equiv 1\pmod 6$, we have 
\[
X(X^2-9)-a(X^2-1)=\Bigl(X-\frac a3\Bigr)^3.
\]
Since $p\ge 5$, $a^2=-27$ implies $a/3\notin\{0,\pm1,\pm3\}$. Hence in this case, we have $a\in \text{im}\,\phi$ with $\phi^{-1}(a)=\{a/3\}$ and $|\phi^{-1}(a)|=1$. 
Therefore,
\[
|\text{im}\,\phi|=
\begin{cases}
\displaystyle\frac{q-5}3&\text{if}\ q\equiv-1\pmod 6,\vspace{0.4em}\cr
\displaystyle\frac{q-7}3+2=\displaystyle\frac{q-1}3&\text{if}\ q\equiv 1\pmod 6.
\end{cases}
\]
It follows from Theorem~\ref{thm1} that
\[
|\Theta_{\rm b}|=\frac 12|\text{im}\,\phi|=
\begin{cases}
\displaystyle\frac{q-5}6&\text{if}\ q\equiv-1\pmod 6,\vspace{0.4em}\cr
\displaystyle\frac{q-1}6&\text{if}\ q\equiv 1\pmod 6.
\end{cases}
\]
\end{proof}

We will see that Class III-a has precisely $(q-1)/2$ equivalence classes and that Classes III-c and III-d together have precisely 2 equivalence classes. Therefore, we expect Class III-b to have $q-1-(q-1)/2-2=(q-5)/2$ equivalence classes. On the other hand, according to Corollary~\ref{cor2}, the invariant $\theta$ can only distinguish $\lfloor(q-1)/6\rfloor$ equivalence classes in Class III-b. Therefore, the invariant $\theta$ is not strong enough for Class III-b and a stronger invariant is needed.

Let $f_{s,t}$ be in Class III-b. By the proof of Theorem~\ref{thm1},
\[
\theta(s,t)=\frac{s^3}{t(1+s+t)}=27+a^2,
\]
where 
\[
a=\frac{\mu(\mu^2-9)}{\mu^2-1},\quad \mu\in\f_q\setminus\{0,\pm1,\pm3\}.
\]
Moreover,
\begin{equation}\label{eq:Gst}
G_{s,t}=(X^2+a_1X+a_0)(X^2+b_1X+b_0),
\end{equation}
where
\begin{equation}\label{eq:a0}
a_0=\frac 14(a_1^2-du^2),
\end{equation}
$d\in\f_q$ is a fixed nonsquare, and $u\in\f_q$ satisfies
\begin{equation}\label{eq:Da1u}
D(a_1,u)=0,
\end{equation}
where $D(X,Y)$ is given in \eqref{DXY}.
Let $\rho_1,\rho_1^q,\rho_2,\rho_2^q$ be the ramification points of $f_{s,t}$, where $\rho_1,\rho_1^q$ are the roots of $X^2+a_1X+a_0$ and $\rho_2,\rho_2^q$ are the roots of $X^2+b_1X+b_0$. For each $\phi\in\pgl(2,\f_q)$, either $\phi(\{\rho_1,\rho_1^q\})=\{\rho_1,\rho_1^q\}$ and $\phi(\{\rho_2,\rho_2^q\})=\{\rho_2,\rho_2^q\}$, or $\phi(\{\rho_1,\rho_1^q\})=\{\rho_2,\rho_2^q\}$ and $\phi(\{\rho_2,\rho_2^q\})=\{\rho_1,\rho_1^q\}$. In the notation of Section~\ref{sec5}, $\bigl(\phi(\rho_1),\phi(\rho_1^q),\phi(\rho_2),\phi(\rho_2^q)\bigr)$ is a permutation of $(\rho_1,\rho_1^q,\rho_2,\rho_2^q)$ by an element of $\langle V,(1,2)\rangle<S_4$. It follows from \eqref{C1234} that 
\[
C\bigl(\phi(\rho_1),\phi(\rho_1^q),\phi(\rho_2),\phi(\rho_2^q)\bigr)=C((\rho_1,\rho_1^q,\rho_2,\rho_2^q)\kern0.6em \text{or}\kern0.6em C((\rho_1,\rho_1^q,\rho_2,\rho_2^q)^{-1},
\]
where $C(\ )$ denotes the cross ratio. Define
\begin{equation}\label{eq:lambda}
\lambda=\lambda(f_{s,t})=C((\rho_1,\rho_1^q,\rho_2,\rho_2^q)+C((\rho_1,\rho_1^q,\rho_2,\rho_2^q)^{-1}.
\end{equation}
Then $\lambda$ is an invariant under equivalence. Note that $\lambda$ is a rational function in $\rho_1,\rho_1^q,\rho_2,\rho_2^q$ which is symmetric in $\rho_1,\rho_1^q$ and $\rho_2,\rho_2^q$. Therefore, $\lambda$ can be expressed as a rational function in $a_0,a_1,b_0,b_1$:
\[
\lambda=\frac{-2 a_0^2+(-8 a_0+2 (-2 a_0+a_1^2)) b_0-2 b_0^2+2 a_0 a_1 b_1+2 a_1 b_0 b_1+(2 a_0-a_1^2) b_1^2}{-a_0^2+(2 a_0-a_1^2) b_0-b_0^2+a_0 a_1 b_1+a_1 b_0 b_1-a_0 b_1^2}.
\]
Furthermore, with the substitution \eqref{b0b1}, $\lambda$ can be expressed in terms of $a_0$ and $a_1$ only:
\begin{dmath}
\lambda=(8 a_0^2+16 a_0^3+8 a_0^4+16 a_0^2 a_1+16 a_0^3 a_1+24 a_0^2 a_1^2+16 a_0 a_1^3+16 a_0^2 a_1^3+4 a_1^4+16 a_0 a_1^4+4 a_0^2 a_1^4+4 a_1^5+4 a_0 a_1^5+a_1^6) /\bigl(4 a_0 (1+a_0+a_1) (a_0+a_0^2+5 a_0 a_1+2 a_1^2+2 a_0 a_1^2+a_1^3)\bigr).
\end{dmath}
Making the substitution $a_0=(a_1^2-du^2)/4$ in the above and use \eqref{eq:Da1u} to reduce the result, we get
\begin{equation}\label{lba-m}
\lambda=\frac{2(9-2\mu^2+\mu^4)}{(\mu^2-1)(\mu^2-9)}.
\end{equation}
Note that $\lambda$ depends only on $\mu^2$, but not on $a_1$ and $u$. The invariant $\theta$ also depends on $\mu^2$ only:
\begin{equation}\label{eq:the-mu2}
\theta=27+\frac{\mu^2(\mu^2-9)^2}{(\mu^2-1)^2}.
\end{equation}
The pair $(\theta,\lambda)$ is a stronger invariant than $\theta$ alone. We now determine the number of possible values of $(\theta,\lambda)$.

Define
\[
\begin{array}{cccc}
\Psi:&\f_q\setminus\{0,\pm1,\pm3\}&\longrightarrow &\f_q^2\vspace{0.2em}\cr
&x&\xmapsto{\hspace{1em}} &(\Psi_1(x),\Psi_2(x)),
\end{array}
\]
where 
\[
\Psi_1(x)=27+\frac{x^2(x^2-9)^2}{(x^2-1)^2},
\]
\[
\Psi_2(x)=\frac{2(9-2x^2+x^4)}{(x^2-1)(x^2-9)}.
\]
For $x,x'\in\f_q\setminus\{0,\pm1,\pm3\}$, we have
\[
\Psi_1(x')=\Psi_1(x)\ \Leftrightarrow\ x'=\pm x,\ \pm\frac{x+3}{x-1},\ \pm\frac{x-3}{x+1}.
\]
Moreover,
\[
\Psi_2(\pm x)-\Psi_2\Bigl(\pm\frac{x+3}{x-1}\Bigr)=\frac{(3+x^2)^2(-3+6x+x^2)}{4x(x^2-1)(x^2-9)},
\]
\[
\Psi_2(\pm x)-\Psi_2\Bigl(\pm\frac{x-3}{x+1}\Bigr)=-\frac{(3+x^2)^2(-3-6x+x^2)}{4x(x^2-1)(x^2-9)},
\]
\[
\Psi_2\Bigl(\pm\frac{x+3}{x-1}\Bigr)-\Psi_2\Bigl(\pm\frac{x-3}{x+1}\Bigr)=-\frac{(-3+x^2)(3+x^2)^2}{2x(x^2-1)(x^2-9)}.
\]
It follows that
\[
\Bigl(\Psi_2(\pm x),\, \Psi_2\Bigl(\pm\frac{x+3}{x-1}\Bigr),\, \Psi_2\Bigl(\pm\frac{x-3}{x+1}\Bigr)\Bigr)=
\begin{cases}
(1,1,1)&\text{if}\ x=\pm\sqrt{-3},\vspace{0.2em}\cr
(5/2,5/2,-2)&\text{if}\ x=-3\pm 2\sqrt 3,\vspace{0.2em}\cr
(5/2,-2, 5/2)&\text{if}\ x=3\pm 2\sqrt 3,\vspace{0.2em}\cr
(-2,5/2,5/2)&\text{if}\ x=\pm\sqrt 3.
\end{cases}
\]
For the other values of $x$, $\Psi_2(\pm x)$, $\Psi_2\bigl(\pm(x+3)/(x-1)\bigr)$, $\Psi_2\bigl(\pm(x-3)/(x+1)\bigr)$ are distinct. Note that $\sqrt{-3}\in\f_q$ if and only if $q\equiv 1\pmod 6$ and that $\sqrt 3\in\f_q$ if and only if $q\equiv \pm1\pmod{12}$.
Moreover,
\[
\Psi_1(x)=\begin{cases}
0&\text{if}\ x=\pm\sqrt{-3},\vspace{0.4em}\cr
54&\text{if}\ x=-3\pm2\sqrt3,\; 3\pm2\sqrt3,\;\pm\sqrt3,
\end{cases}
\]
and $\Psi_1(x)\in\Theta_{\rm b}\setminus\{0,54\}$ for other values of $x$.

Since $\Psi(\mu)=(\theta,\lambda)$, the above discussion allows us to enumerate the possible values of $(\theta,\lambda)$ in Table~\ref{tb:the-lam}.

\begin{table}[ht]
\caption{Values of $\mu$, $\theta$, $\lambda$}\label{tb:the-lam}
   \renewcommand*{\arraystretch}{1.4}
    \centering
     \begin{tabular}{c|c|c|c}
         \hline
         $\mu$ & $\theta$ & $\lambda$  & condition \\ \hline
         $\pm\sqrt{-3}$ & 0 & 1& $q\equiv 1\pmod 6$ \\ \hline
         $\begin{array}{c} \pm \sqrt3 \cr \pm(3+2\sqrt3),\, \pm(3-2\sqrt3)\end{array}$ & 54 & $\begin{array}{c} -2 \cr 5/2\end{array}$ & $q\equiv \pm1\pmod{12}$\\ \hline
         $\begin{array}{l} \ne0,\,\pm1,\,\pm3,\, \pm\sqrt{-3},\,\pm\sqrt 3\cr \pm(3+2\sqrt3),\, \pm(3-2\sqrt3)\end{array}$ & $\Theta_{\rm b}\setminus\{0,54\}$  &  3 values for each $\theta$  \\ \hline
     \end{tabular}
\end{table}

From Table~\ref{tb:the-lam}, and using the fact that $|\Theta_{\rm b}|=\lfloor(q-1)/6\rfloor$, we see that the number of pairs $(\theta,\lambda)$ is 
\[
\begin{cases}
\displaystyle\frac{q-5}2-1&\text{if}\ q\equiv\pm1\pmod{12},\vspace{0.4em}\cr
\displaystyle\frac{q-5}2&\text{if}\ q\not\equiv\pm1\pmod{12}.
\end{cases}
\]
This presents a subtle situation: the invariant $(\theta,\lambda)$ determines the all equivalence classes in Class III-b for $q\not\equiv\pm1\pmod{12}$, but fails to distinguish two equivalence classes for $q\equiv\pm1\pmod{12}$. Computer experiment indicates that among the functions in Class III-b with $(\theta,\lambda)=(54,\,5/2)$, the ones with $\mu^2=(3+2\sqrt3)^2$ and the ones with $\mu^2=(3-2\sqrt3)^2$ belong to two different equivalence classes. Therefore, we are convinced that $\mu^2$ is indeed an invariant.

We first express $\mu^2$ in terms of $a_0$ and $a_1$ in \eqref{eq:Gst}. Recall that in \eqref{cod2}, $e=\mu/u$ and $du^2=a_1^-4a_0$. Hence by \eqref{cod2},
\begin{equation}\label{mu2=}
\mu^2=\frac{du^2(2+2a_0+a_1)}{a_1(2+a_1)(2a_0+a_1)}=\frac{(a_1^2-4a_0)(2+2a_0+a_1)}{a_1(2+a_1)(2a_0+a_1)}.
\end{equation}
Set
\begin{equation}\label{eq:M}
M(X,Y)=\frac{(Y^2-4X)(2+2X+Y)}{Y(2+Y)(2X+Y)} 
\end{equation} 
so that $\mu^2=M(a_0,a_1)$. It is easy to verify that $M(a_0,a_1)=M(b_0,b_1)$, where $X^2+b_1X+b_0$ is the other irreducible factor of $G_{s,t}$ and $b_0,b_1$ are given in \eqref{b0b1}. Therefore, $\mu^2$ depends only on $(s,t)$ but not on the choice of the irreducible factor of $G_{s,t}$.

\begin{thm}\label{thm:mu2}
$\mu^2$ is an invariant for Class III-b, that is, if $f_{s,t}\sim f_{s',t'}$ are in Class III-b and
\[
G_{s,t}=(X^2+a_1X+a_0)(X^2+b_1X+b_0),
\]
\[
G_{s',t'}=(X^2+a_1'X+a_0')(X^2+b_1'X+b_0'),
\]
then
\[
M(a_0,a_1)=M(a_0',a_1').
\]
\end{thm}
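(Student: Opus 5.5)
We prove that $M(a_0,a_1)$ is unchanged under every $\phi$ realizing an equivalence $f_{s,t}\sim f_{s',t'}$, using the description of all such $\phi$ in Theorem~\ref{summary}. The idea is to express $\mu^2$ intrinsically through the ramification points and the branch data of $f_{s,t}$. Then invariance follows from the fact, recorded in Section~\ref{sec4}, that $\phi$ carries ramification points to ramification points. Checking this directly from \eqref{st} and \eqref{s'=,t'=} would be very heavy, so we use the intrinsic route.

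The structural input is that $\mu^2$ is already tied to two invariants. By \eqref{eq:the-mu2}, $\theta$ determines $\mu^2$ up to the six-element orbit $\{x^2: x=\pm\mu,\pm(\mu+3)/(\mu-1),\pm(\mu-3)/(\mu+1)\}$. By \eqref{lba-m} and the computation of $\Psi_2$ on this orbit, $\lambda$ separates the three squared values except in the degenerate cases of Table~\ref{tb:the-lam}.

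\textbf{Step 1.} In the generic situation, $\theta(s,t)=\theta(s',t')$ and $\lambda(f_{s,t})=\lambda(f_{s',t'})$. Hence $\mu^2$ and $\mu'^2$ lie in the same $\Psi_1$-fiber and have equal $\Psi_2$-values. The explicit difference formulas for $\Psi_2$ then force $\mu'^2=\mu^2$, except when $x^2=-3$, $x=\pm\sqrt3$, or $x=\pm3\pm2\sqrt3$.

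\textbf{Step 2.} For $x^2=-3$, the whole $\Psi_1$-fiber consists of the single value $\mu^2=-3$: indeed $(x+3)/(x-1)$ and $(x-3)/(x+1)$ square to $-3$ as well. So there is nothing to prove in that case.

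\textbf{Step 3.} The only genuinely remaining case is $(\theta,\lambda)=(54,5/2)$, with $\mu^2\in\{(3+2\sqrt3)^2,(3-2\sqrt3)^2\}$. This step is the main obstacle.

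For it I would use the finer information of the actual permutation induced on the four ramification points $\rho_1,\rho_1^q,\rho_2,\rho_2^q$. Since $\phi\in\pgl(2,\f_q)$ commutes with Frobenius, it either preserves or swaps the two conjugate pairs. Within a pair, it maps $\rho_1\mapsto\rho_1$ or $\rho_1\mapsto\rho_1^q$ compatibly with Frobenius.

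I would attach to each ramification point $\rho$ a further invariant quantity, namely the cross ratio of $\rho$, $\rho^q$ and the two other ramification points lying over the same branch value. The natural candidate is the third point of the fiber $f^{-1}(f(\rho))$: over each branch point $f(\rho)$, the fiber is $\{\rho,\rho,\sigma\}$ with $\sigma$ a simple point. Then I would consider the cross ratio $C(\rho_1,\rho_1^q,\sigma_1,\sigma_1^q)$, or $C(\rho_1,\rho_1^q,\rho_2,\sigma_1)$, and its counterpart for the other pair. The two values must be symmetrized over swapping the pairs, for example by taking their sum and product. These quantities are preserved by $\phi$, because $\phi$ maps fibers to fibers and simple points to simple points.

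Computationally, I would express such a quantity in $a_0,a_1$ via \eqref{b0b1} and the relation $\sigma=b$ from \eqref{alpha-s-t}. The hope is to find a rational function in these data whose value is a function of $\mu^2$ that separates $(3+2\sqrt3)^2$ from $(3-2\sqrt3)^2$. Ideally it is $M(a_0,a_1)$ itself, up to a pair-symmetric expression.

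\textbf{Fallback.} If this fails, I would instead verify invariance directly. In case (i) of Theorem~\ref{summary}, I would check $M$ under the six maps of Table~\ref{tb2}, where $G_{s',t'}$ is the transform of $G_{s,t}$ under $\phi^{-1}$. In case (ii), I would substitute \eqref{st} and \eqref{s'=,t'=} and reduce modulo \eqref{eq:Da1u} by computer, restricting to the two values of $\mu^2$ with $\theta=54$.
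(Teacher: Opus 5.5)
Your Steps 1--2 are a correct reduction, granting \eqref{eq:the-mu2}, \eqref{lba-m} and the invariance of $\theta$ and $\lambda$. Equality of $(\theta,\lambda)$ forces $\mu'^2=\mu^2$ except when $(\theta,\lambda)=(54,5/2)$. (The value $\mu^2=3$ is not really exceptional, since $\lambda=-2$ occurs only once in its $\theta$-fiber.) But this recovers only what $(\theta,\lambda)$ already gives, and the whole new content of the theorem lies in the case you leave open. There the two candidates are $m=(3\pm2\sqrt3)^2=21\pm12\sqrt3$, whose product is $9$. One checks from \eqref{lba-m} that $\lambda(m)=\lambda(9/m)$ identically, so what you need is an invariant that breaks the symmetry $m\leftrightarrow 9/m$. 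Your candidate, the cross ratio $C(\rho_1,\rho_1^q,\sigma_1,\sigma_1^q)$ symmetrized over the two Frobenius pairs, is a legitimate $\f_q$-invariant. However, you neither compute it nor give any reason it takes different values at $21+12\sqrt3$ and $21-12\sqrt3$ rather than sharing the symmetry of $\lambda$. Step 3 is therefore a research plan, not a proof.

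Your fallback points toward the paper's proof, which is a uniform direct computation, but it omits what makes that computation possible. Since $M$ is not a rational function of $(s,t)$, substituting \eqref{st} and \eqref{s'=,t'=} does not produce $M(a_0,a_1)$ or $M(a_0',a_1')$. The paper instead transports the roots of $X^2+a_1X+a_0$ by $\phi^{-1}$. This writes $a_0',a_1'$ as rational functions of $a_0,a_1$ and the entries of $\phi$ (formula \eqref{a11}), reducing the claim to the vanishing of a numerator $N$. The vanishing is immediate for the two generators of $S_3$. For $\phi\notin S_3$, the paper ties $\phi$ to $(a_0,a_1)$ by equating \eqref{st} with the parametrization \eqref{eq:sta0a1}. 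After normalizing $a=1$ and arranging $a_1\ne -1$, this gives $a_0=-a_1C_0/(2C_1)$ and a relation $K=0$, and $N$ turns out to be a multiple of $K$. That argument needs neither the reduction via $(\theta,\lambda)$ nor the auxiliary $u$ and $d$ entering through \eqref{eq:Da1u}. Without an explicit elimination of this kind, your fallback only asserts that a computer check should succeed.
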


\begin{proof}
We have $f_{s',t'}=\psi\circ f_{s,t}\circ\phi$ for some $\phi,\psi\in\pgl(2,\f_q)$. Write $\phi=\left[\begin{smallmatrix}a&b\cr c&d\end{smallmatrix}\right]$, so $\phi^{-1}=\left[\begin{smallmatrix}d&-b\cr -c&a\end{smallmatrix}\right]$. Let $\alpha,\beta$ be the roots of $X^2+a_1X+a_0$ ($\beta=\alpha^q$). Then $\phi^{-1}(\alpha)$ is a ramification point of $f_{s',t'}$. We may assume $\phi^{-1}(\alpha)$ is a root of $X^2+a_1'X+a_0'$. The other root of $X^2+a_1'X+a_0'$ is $(\phi^{-1}(\alpha))^q=\phi^{-1}(\alpha^q)=\phi^{-1}(\beta)$. Hence
\[
a_0'=\phi^{-1}(\alpha)\phi^{-1}(\beta),\quad a_1'=-(\phi^{-1}(\alpha)+\phi^{-1}(\beta)).
\]
Both $a_0'$ and $a_1'$ are rational functions in $\alpha$ and $\beta$ which are symmetric in $\alpha$ and $\beta$. Therefore, both $a_0'$ and $a_1'$ are rational functions in $a_0$ and $a_1$: 
\[
a_0'=\frac{b^2+d^2 a_0+b d a_1}{a^2+c^2 a_0+a c a_1},
\]
\begin{equation}\label{a11}
a_1'=\frac{2 a b+2 c d a_0+(b c+a d) a_1}{a^2+c^2 a_0+a c a_1},
\end{equation}
where the denominators are nonzero. It follows that 
\begin{align*}
M(a_0',a_1')=\,&
-\frac 1D (4 a_0-a_1^2) (b c-a d)^2\cr
&\cdot (2 a^2+2 a b+2 b^2+a a_1 (2 c+d)+a_1 b (c+2 d)+2 a_0 (c^2+c d+d^2)),
\end{align*}
where
\begin{align*}
D=\,&(2 a b+a_1 b c+a a_1 d+2 a_0 c d) (2 a^2+2 a b+a_1 b c+2 a_0 c (c+d)+a a_1 (2 c+d))\cr
&\cdot (2 b^2+2 a_0 d (c+d)+a_1 b (c+2 d)+a (2 b+a_1 d)).
\end{align*}
Since $M(a_0',a_1')$ is defined, we have $a_1'(2+a_1')(2a_0'+a_1')\ne0$, which implies $D\ne 0$. Now we compute
\[
M(a_0,a_1)-M(a_0',a_1')=\frac{8 (-4 a_0+a_1^2)N}{a_1 (2 + a_1) (2 a_0 + a_1)D},
\]
where
\begin{equation}\label{N}
N=2 a^4 b^2 + 2 a^4 a_0 b^2 + a^4 a_1 b^2 +\cdots +2 a_0^4 c^2 d^4+a_0^3 a_1 c^2 d^4.
\end{equation}
(The Mathematica code for the above computation is given in Appendix~\ref{app-b}.) It suffices to show that $N=0$.

\medskip
{\bf Case 1.} Assume $\phi\in S_3$. Note that $S_3$ is generated by $\phi_1=\left[\begin{smallmatrix}1&-1\cr 0&-1\end{smallmatrix}\right]$ and $\phi_2=\left[\begin{smallmatrix}0&1\cr 1&0\end{smallmatrix}\right]$. We find that $N=0$ when $\phi=\phi_1, \phi_2$. Or, one can check directly that for $\phi=\phi_1,\phi_2$, $M(a_0',a_1')=M(a_0,a_1)$.

\medskip
{\bf Case 2.} Assume $\phi\notin S_3$.  We may assume $a_1\ne -1$. (If $a_1=-1$, we must have $a_0\ne 1$, since otherwise, by \eqref{b0b1}, $(b_0,b_1)=(1,-1)=(a_0,a_1)$, which is a contradiction. Setting $\phi=\phi_2=\left[\begin{smallmatrix}0&1\cr 1&0\end{smallmatrix}\right]$ in \eqref{a11}, we get $a_1'=a_1/a_0\ne-1$. Simply replace $f_{s,t}$ with $\psi\circ f_{s,t}\circ\phi_1$, where $\psi\in\pgl(\f_q,2)$ is such that $\psi\circ f_{s,t}\circ\phi_1$ is of the form $f_{s_1,t_1}$ for some $(s_1,t_1)$.) By \eqref{notinS3}, $acd(a-c)(c+d)\ne 0$ and \eqref{st} holds. By \eqref{s=t=} and \eqref{b0b1}, we can express $s$ and $t$ in terms of $a_0, a_1$:
\begin{equation}\label{eq:sta0a1}
\begin{cases}
s=\displaystyle-\frac{2 a_0+2 a_0^2-2 a_0 a_1-2 a_1^2-2 a_0 a_1^2-a_1^3}{2 a_0+a_1},\vspace{0.4em}\cr
t=\displaystyle\frac{a_0^2 (2 + a_1)}{2 a_0 + a_1}.
\end{cases}
\end{equation}
Combining \eqref{st} with \eqref{eq:sta0a1} gives
\begin{align}\label{s-abcd}
&\frac{a^2 d+2 a b c+2 a b d-2 a c d-a d^2+b^2 c-b c^2-2 b c d}{c d (c+d)}\\
=\,&-\frac{2 a_0+2 a_0^2-2 a_0 a_1-2 a_1^2-2 a_0 a_1^2-a_1^3}{2 a_0+a_1},\nonumber
\end{align}
\begin{equation}\label{t-abcd}
-\frac{a b (a+b)}{c d (c+d)}=\frac{a_0^2 (2 + a_1)}{2 a_0 + a_1}.
\end{equation}
We may assume $a=1$ (since $a\ne 0$). Write \eqref{s-abcd} and \eqref{t-abcd} as quadratic equations in $a_0$:
\begin{equation}\label{E1}
A_2a_0^2+A_1a_0+A_0=0,
\end{equation}
\begin{equation}\label{E2}
B_2a_0^2+B_1a_0+B_0=0,
\end{equation}
where
\[
\begin{cases}
A_0=-a_1 (-2 b c-b^2 c+b c^2-d-2 b d+2 c d+2 b c d\cr
\kern2.5em +2 a_1 c^2 d+a_1^2 c^2 d+d^2+2 a_1 c d^2+a_1^2 c d^2),\vspace{0.4em}\cr
A_1=-2 (-2 b c-b^2 c+b c^2-d-2 b d+2 c d+2 b c d-c^2 d\cr
\kern2.5em+a_1 c^2 d+a_1^2 c^2 d+d^2-c d^2+a_1 c d^2+a_1^2 c d^2),\vspace{0.4em}\cr
A_2=2 c d (c+d),
\end{cases}
\]
\[
\begin{cases}
B_0=a_1 b (1 + b),\cr
B_1= 2 b (1 + b),\cr
B_2= (2 + a_1) c d (c + d).
\end{cases}
\]
Computing $\eqref{E1}\times B_2-\eqref{E2}\times A_2$ gives
\begin{equation}\label{eqa0}
-2cd(c+d)C_1a_0=a_1cd(c+d)C_0=0,
\end{equation}
where 
\begin{align*}
C_0=\,&
2 b+2 b^2-4 b c-2 a_1 b c-2 b^2 c-a_1 b^2 c+2 b c^2+a_1 b c^2-2 d-a_1 d-4 b d\cr
&-2 a_1 b d+4 c d+2 a_1 c d+4 b c d+2 a_1 b c d+4 a_1 c^2 d+4 a_1^2 c^2 d\cr
&+a_1^3 c^2 d+2 d^2+a_1 d^2+4 a_1 c d^2+4 a_1^2 c d^2+a_1^3 c d^2,
\end{align*}
\begin{align*}  
C_1=\,&  
2 b+2 b^2-4 b c-2 a_1 b c-2 b^2 c-a_1 b^2 c+2 b c^2+a_1 b c^2-2 d-a_1 d-4 b d\cr
&-2 a_1 b d+4 c d+2 a_1 c d+4 b c d+2 a_1 b c d-2 c^2 d+a_1 c^2 d+3 a_1^2 c^2 d\cr
&+a_1^3 c^2 d+2 d^2+a_1 d^2-2 c d^2+a_1 c d^2+3 a_1^2 c d^2+a_1^3 c d^2.
\end{align*}
If $C_1=0$, then $C_0=0$. We then have
\[
0=C_0-C_1=(1+a_1) (2+a_1) c d (c+d)\ne 0,
\]
which is a contradiction. Therefore, $C_1\ne 0$. Now by \eqref{eqa0},
\begin{equation}\label{a0=}
a_0=-\frac{a_1C_0}{2C_1}.
\end{equation}
Making this substitution in \eqref{E2} gives 
\[
\frac{a_1 (2 + a_1)^2 c d (c + d)K}{4C_1^2}=0,
\]
where
\[
K=-4 b^2-8 b^3-4 b^4+8 b^2 c+\cdots +6 a_1^5 c^2 d^4+a_1^6 c^2 d^4.
\]
Hence $K=0$. Finally, making the substitution \eqref{a0=} in \eqref{N} gives
\[
N=\frac{K\cdot(-64b^4-256b^5-\cdots-a_1^9c^4d^8)}{8C_1^4}=0.
\]
This completes the proof. (The Mathematica code for the above computation is given in Appendix~\ref{app-b}.)
\end{proof}

\begin{rmk}\rm
Since $\mu$ takes values in $\f_q\setminus\{0,\pm1,\pm3\}$, $\mu^2$ takes precisely $(q-5)/2$ values (the squares in $\f_q$ other than 0,1,9). Hence $\mu^2$ distinguishes at least $(q-5)/2$ equivalence classes in Class III-b. We will see that Class III-b has precisely $(q-5)/2$ equivalence classes. Therefore, $\mu^2$ determines the equivalence classes in Class III-b completely. The difference between the invariants $\mu^2$ and $(\theta,\lambda)$ is subtle: the latter determines almost all equivalence classes in Class III-b except for $(\theta,\lambda)=(54,5/2)$ where it fails to distinguish two equivalence classes.
\end{rmk}
 
\subsection{Class III-c}\

Recall that Class III-c consists of $f_{s,t}$ such that $G_{s,t}$ is a square of an irreducible quadratic over $\f_q$. and $\Theta_{\rm c}=\{\theta(s,t): f_{s,t}\ \text{is in Class III-c}\}$. By Corollary~\ref{C-tha}, we know that $\Theta_{\rm c}\subset\{27\}$.

\begin{lem}\label{tha=27}
Assume $p\ge 5$. Then
$(s,t)\in\f_q^2$ satisfies 
\[
\theta(s,t)=\frac{s^3}{t(1+s+t)}=27
\]
if and only if
\begin{itemize}
\item[(i)]
\[
\begin{cases}
s=\displaystyle\frac 34(-1+u^2),\vspace{0.4em}\cr
t=-\displaystyle\frac 18(1+u)^3,
\end{cases}
\]
for some $u\in\f_q\setminus\{\pm 1\}$, or
\medskip
\item[(ii)] $(s,t)=(-3,1)$.
\end{itemize}
\end{lem}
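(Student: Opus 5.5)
The plan is to solve $s^3=27\,t(1+s+t)$ directly: view it as a quadratic in $t$ and read off exactly when its discriminant is a square in $\f_q$. Throughout, $(s,t)\in\Omega$ is understood, i.e.\ $t(1+s+t)\ne0$, since otherwise $\theta(s,t)$ is not defined. Because $p\ge5$, both $2$ and $3$ are invertible. So I would set $s=3\sigma$, and the condition $\theta(s,t)=27$ becomes
\[
t^2+(1+3\sigma)t-\sigma^3=0 .
\]

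\textbf{The discriminant.} The discriminant of this quadratic in $t$ factors as
\[
(1+3\sigma)^2+4\sigma^3=(\sigma+1)^2(4\sigma+1),
\]
which is checked by expanding both sides to $4\sigma^3+9\sigma^2+6\sigma+1$. The key consequence is that, for given $\sigma\in\f_q$, a solution $t\in\f_q$ exists if and only if $(\sigma+1)^2(4\sigma+1)$ is a square in $\f_q$. This splits the argument into the cases $\sigma=-1$ and $\sigma\ne-1$.

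\textbf{Case $\sigma=-1$, i.e.\ $s=-3$.} The quadratic becomes $(t-1)^2=0$, so the only solution is $(s,t)=(-3,1)$. Here $t(1+s+t)=-1\ne0$, so this point lies in $\Omega$, and it gives (ii). This is the singular point of the cubic curve, which is why it is not covered by the parametrization in (i).

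\textbf{Case $\sigma\ne-1$.} A solution exists if and only if $4\sigma+1=u^2$ for some $u\in\f_q$, that is, $\sigma=(u^2-1)/4$ and $s=\frac34(u^2-1)$.
\begin{itemize}
\item Using $1+3\sigma=(3u^2+1)/4$ and $\sigma+1=(u^2+3)/4$, the two roots are
\[
t=\frac{-(3u^2+1)\pm u(u^2+3)}{8}=-\frac{(1\mp u)^3}{8}.
\]
\item The choice of sign is absorbed by replacing $u$ with $-u$, since $s$ depends only on $u^2$. Hence every solution has the form $t=-\frac18(1+u)^3$, as in (i).
\end{itemize}

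\textbf{The nonvanishing condition.} For the converse and the restriction $u\ne\pm1$, I would compute
\[
1+s+t=\frac{(1-u)^3}{8},\qquad\text{so}\qquad t(1+s+t)=-\frac{(1-u^2)^3}{64}=\Bigl(\frac s3\Bigr)^3.
\]
This identity does two things. It shows directly that every pair in (i) satisfies $\theta=27$. It also shows that $t(1+s+t)\ne0$ exactly when $u\ne\pm1$.

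Finally, the value $u^2=-3$ makes $\sigma=-1$. That value of $u$ gives $(s,t)=(-3,\,-(1+u)^3/8)$, and this is consistent with the case $\sigma=-1$ only if the expression equals $1$. Since $(-3,1)$ is listed separately in (ii), this overlap causes no problem for the "if and only if".

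I expect no real obstacle here. The only step needing care is spotting the factorization of the discriminant as $(\sigma+1)^2(4\sigma+1)$; once it is found, the rest is routine algebra.
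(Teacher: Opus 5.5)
Your proposal is correct and takes the same route as the paper. Both treat $\theta=27$ as a quadratic in $t$ and factor its discriminant: the paper's $27(3+s)^2(3+4s)$ becomes your $(\sigma+1)^2(4\sigma+1)$ up to the square factor $27^2$ under $s=3\sigma$. Both then split into $s=-3$ versus $3+4s=3u^2$ and use $t(1+s+t)=(u^2-1)^3/64$ to force $u\ne\pm1$; your identity $t(1+s+t)=(s/3)^3$ also cleanly gives the converse that the paper leaves as ``direct computation.''
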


\begin{proof}
($\Rightarrow$) We have
\begin{equation}\label{27t2}
27t^2+27(1+s)t-s^3=0.
\end{equation}
Hence
\begin{equation}\label{3+4s}
(27(1+s))^2+4\cdot 27s^3=27(3+s)^2(3+4s)
\end{equation}
is a square in $\f_q$. Thus, either $3+s=0$ or $3+4s=3u^2$ for some $u\in\f_q$. 

First assume $3+s=0$. It follows from \eqref{27t2} that $t=1$, which is (ii).
Now assume $3+4s=3u^2$, i.e.,
\[
s=\frac 34(-1+u^2).
\]
Then by \eqref{27t2} and \eqref{3+4s}, 
\[
t=\frac 1{2\cdot 27}\bigl[-27(1+s)+9u(3+s)\bigr]=-\frac 18(1+u)^3.
\]
Since 
\[
0\ne t(1+s+t)=\frac 1{64}(-1+u)^3(1+u)^3,
\]
we have $u\ne\pm 1$. Hence we have (i).

\medskip
($\Leftarrow$) Direct computation.
\end{proof}

\begin{thm}\label{C-tha3}
For $p\ge 5$, 
\[
\Theta_{\rm c}=
\begin{cases}
\emptyset& \text{\rm if}\ q\equiv 1 \pmod 6,\cr
\{27\}& \text{\rm if}\ q\not\equiv 1 \pmod 6.
\end{cases}
\]
\end{thm}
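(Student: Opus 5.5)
By Corollary~\ref{C-tha}, $\Theta_{\rm c}\subset\{27\}$. So it suffices to decide whether some $(s,t)\in\Omega$ with $\theta(s,t)=27$ has $G_{s,t}$ equal to the square of an irreducible quadratic over $\f_q$. Lemma~\ref{tha=27} parametrizes all such $(s,t)$, and we treat its two cases separately.

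In case (ii), $(s,t)=(-3,1)$, and by the proof of Theorem~\ref{thm:(-3,1)} we have $G_{-3,1}=(X^2-X+1)^2$. The quadratic $X^2-X+1$ has discriminant $-3$, so it is irreducible over $\f_q$ exactly when $-3$ is a nonsquare. For $p\ge5$ this means $q\not\equiv1\pmod 3$, i.e.\ $q\not\equiv 1\pmod 6$ since $q$ is odd. Thus $(-3,1)$ lies in Class III-c precisely when $q\not\equiv1\pmod6$. In that case $f_{-3,1}$ has no $\f_q$-rational ramification point, so by Lemma~\ref{lem:2/1} it is not in Class II. Also $f_{-3,1}^{-1}(\infty)=\{0,1,\infty\}$, so it is not in Class I and hence lies in Class III. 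This gives $27\in\Theta_{\rm c}$ when $q\not\equiv1\pmod6$.

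In case (i), $s=\frac34(u^2-1)$ and $t=-\frac18(1+u)^3$ with $u\ne\pm1$. I would factor $G_{s,t}$ explicitly. The computation in the proof of Theorem~\ref{thm:(-3,1)} suggests taking $a=(1+u)/2$: there, a ramification point $a$ of index $3$ gave $t=-a^3$ and $s=3(a^2-a)$, and indeed $3(a^2-a)=\frac34(u^2-1)$. That computation then yields
\[
G_{s,t}=(X-a)^2\bigl(X^2+2(a-1)X-a\bigr),
\]
so $G_{s,t}$ has the $\f_q$-rational root $a$. Therefore $G_{s,t}$ is never the square of an irreducible quadratic, and such $(s,t)$ are never in Class III-c. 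This is a quick polynomial identity check and is the only step needing computation; I expect no real obstacle there.

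Combining the two cases: when $q\equiv1\pmod6$, no pair with $\theta=27$ lies in Class III-c, so $\Theta_{\rm c}=\emptyset$. When $q\not\equiv1\pmod6$, the pair $(-3,1)$ gives $\Theta_{\rm c}=\{27\}$. The main care point is the Class III membership of $f_{-3,1}$, which Lemma~\ref{lem:2/1} handles.
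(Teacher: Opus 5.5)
Your proposal is correct and follows essentially the same route as the paper: you reduce to $\theta=27$ via Corollary~\ref{C-tha} and split into the two cases of Lemma~\ref{tha=27}. In case (i) you get $G_{s,t}=(X-a)^2\bigl(X^2+2(a-1)X-a\bigr)$ with $a=(1+u)/2\in\mathbb{F}_q$, which is the paper's factorization $\frac18(1+u-2X)^2\bigl(-1-u+2(u-1)X+2X^2\bigr)$; in case (ii), $G_{-3,1}=(X^2-X+1)^2$ with $X^2-X+1$ irreducible exactly when $q\not\equiv 1\pmod 6$. Your extra check that $f_{-3,1}$ really lies in Class III (no rational ramification point and $|f_{-3,1}^{-1}(\infty)|=3$) is a correct detail that the paper leaves implicit.
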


\begin{proof}
Let $(s,t)\in\f_q^2$ be such that $\theta(s,t)=27$.
In Lemma~\ref{tha=27} (i), 
\[
G_{s,t}=\frac 18(1+u-2X)^2(-1-u+2(u-1)X+2X^2),
\]
which is not a square of an irreducible quadratic over $\f_q$. Hence $f_{s,t}$ is not in Class III-c. 
In Lemma~\ref{tha=27} (ii), we have $G_{-3,1}=(1-X+X^2)^2$, where $1-X+X^2$ is irreducible over $\f_q$ if and only if $q\not\equiv 1\pmod 3$, i.e., $q\not\equiv 1\pmod 6$ since $q$ is odd. Thus $f_{-3,1}$ is in Class III-c if and only if $q\not\equiv 1\pmod 6$.
Therefore, $27\in\Theta_3$ if and only if $q\not\equiv 1\pmod 6$.
\end{proof}

\begin{cor}\label{cor:III-d}
Assume $p\ge 5$. If $q\equiv 1\pmod 6$, Class III-c is empty. If $q\not\equiv 1\pmod 6$, Class III-c has exactly one equivalence class represented by $f_{-3,1}$.
\end{cor}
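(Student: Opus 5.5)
The plan is to get the corollary directly from the proof of Theorem~\ref{C-tha3}. Every function in Class III is equivalent to some $f_{s,t}$ with $(s,t)\in\Omega$, and membership in Class III-c depends only on how $G_{s,t}$ factors over $\f_q$. I would therefore first show that every $f_{s,t}$ in Class III-c actually has $(s,t)=(-3,1)$, and then read off the two congruence cases.

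\emph{Step 1: the only candidate is $(-3,1)$.} Suppose $f_{s,t}$ lies in Class III-c. Then $G_{s,t}$ is not separable, so $D(G_{s,t})=0$. By \eqref{Dfst} and Corollary~\ref{C-tha} this forces $\theta(s,t)=27$. Lemma~\ref{tha=27} then leaves two possibilities: either $(s,t)=(-3,1)$, or $(s,t)$ is given by the parametrization (i) with $u\ne\pm1$. In case (i), the factorization $G_{s,t}=\frac18(1+u-2X)^2\bigl(2X^2+2(u-1)X-1-u\bigr)$ computed in the proof of Theorem~\ref{C-tha3} exhibits a linear factor over $\f_q$. So $G_{s,t}$ cannot be the square of an irreducible quadratic, and case (i) is excluded.

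\emph{Step 2: the case split.} For $(s,t)=(-3,1)$ we have $G_{-3,1}=(X^2-X+1)^2$. The quadratic $X^2-X+1$ is irreducible over $\f_q$ exactly when $-3$ is a nonsquare, that is, when $q\not\equiv1\pmod 3$; since $q$ is odd, this is the same as $q\not\equiv1\pmod 6$.
\begin{itemize}
\item If $q\equiv1\pmod 6$, no $f_{s,t}$ lies in Class III-c, so Class III-c is empty.
\item If $q\not\equiv1\pmod 6$, Class III-c is nonempty and every member is equivalent to $f_{-3,1}$. This gives exactly one equivalence class.
\end{itemize}
We should also check that $f_{-3,1}$ genuinely belongs to Class III rather than Class II. By Theorem~\ref{Theorem 2}, this holds because $G_{-3,1}$ has no root in $\f_q$ in this case. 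Alternatively, Theorem~\ref{thm:(-3,1)} shows that $f_{-3,1}$ has no ramification point of index $2$, and we can then apply Lemma~\ref{lem:2/1}.

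No step is a real obstacle. The only point needing care is the reduction from "Class III-c" to $\theta=27$, which goes through the discriminant formula \eqref{Dfst} and uses $t(1+s+t)\ne0$. Once that is in place, the remaining work is already contained in Lemma~\ref{tha=27} and the proof of Theorem~\ref{C-tha3}.
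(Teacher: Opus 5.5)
Your proposal is correct and follows essentially the paper's route: the corollary is read off from the proof of Theorem~\ref{C-tha3}, which uses $\Theta_{\rm c}\subset\{27\}$ (via the discriminant formula), excludes case (i) of Lemma~\ref{tha=27} by the explicit linear factor of $G_{s,t}$, and settles case (ii) by the irreducibility of $X^2-X+1$. Your explicit check that $f_{-3,1}$ lies in Class III rather than Class II is a detail the paper leaves implicit.
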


\subsection{Class III-d}\label{sec6.7}\

In Class III-d, $G_{s,t}$ has at least one root $u\in\f_q$ and all roots $x$ of $G_{s,t}$ in $\f_q$ satisfy $x^3=-t$.
Thus we have 
\begin{equation}\label{ts}
\begin{cases}
t=-u^3,\cr
s=3(-u+u^2),
\end{cases}
\end{equation}
where $0\ne t(1+s+t)=(-1+u)^3u$, i.e., $u\ne 0,1$. Moreover,
\[
G_{s,t}=(X-u)^2(-u+2(u-1)X+X^2),
\]
and $\theta(s,t)=27$.
The function $f_{s,t}$ is in Class III-d if and only if
\begin{itemize}
\item[(i)] 
$-u+2(u-1)X+X^2$ is irreducible over $\f_q$, or
\item[(ii)]
$-u+2(u-1)X+X^2$ is reducible over $\f_q$ and divides $X^3+t=X^3-u^3$.
\end{itemize}

Condition (i) is satisfied if and only if $(2(u-1))^2+4u=4(1-u+u^2)$ is a nonsquare in $\f_q$, i.e., $1-u+u^2$ is a nonsquare in $\f_q$. Since $p>3$, there always exits $u\in\f_q$ satisfying this condition. In fact, let $d$ be a nonsquare of $\f_q$. Then $|V(1-X+X^2-dY^2)|$, the number of zeros of $1-X+X^2-dY^2\in\f_q[X,Y]$ in $\f_q^2$, is known. Write
\[
1-X+X^2-dY^2=\Bigl(X-\frac 12\Big)^2-dY^2+\frac 34.
\]
By \cite[Lemma~6.55]{Hou-ams-gsm-2018}, $|V(1-X+X^2-dY^2)|=q\pm q^{1/2}$. Hence
\begin{align*}
&|\{(x,y)\in\f_q^2:1-x+x^2-dy^2=0,\ y\ne 0\}|\cr
&\ge|V(1-X+X^2-dY^2)|-2\ge q-q^{1/2}-2>0
\end{align*}
for $q\ge 5$.

In (ii),
\[
0=\text{Res}(X^3-u^3,\,-u+2(u-1)X+X^2;\,X)=9(-1+u)u^3(1-u+u^2),
\]
whence $1-u+u^2=0$. Such $u\in\f_q$ exists if and only if $q\equiv 1\pmod 3$. Since $1-u+u^2=0$, we have $(s,t)=(-3,1)$ (by \eqref{ts}) and
\[
G_{s,t}=(1-X+X^2)^2=\bigl((X-u)(X+u^2)\bigr)^2,
\]
which divides $(X^3-u^3)^2=(X^3+1)^2$.

\medskip

To summarize, we have the following theorem.

\begin{thm}\label{thm:III-d}
Assume $p\ge 5$. Let $u\in\f_q$ be such that $1-u+u^2$ is a nonsquare in $\f_q$ and let $(s,t)$ be given by \eqref{ts}. Then $f_{s,t}$ is in Class III-d. In this case, $G_{s,t}=(X-u)^2(-u+2(u-1)X+X^2)$, where 
$-u+2(u-1)X+X^2$ is irreducible over $\f_q$, and the ramification type of $f_{s,t}$ is $(2/2,\,2/2,\,3/1)$. If $q\equiv 1\pmod 6$, $f_{-3,1}$ also belongs to Class III-d. In this case, $G_{s,t}=(1-X+X^2)^2=((x+\epsilon)(x+\epsilon^2))^2$, where $\epsilon\in\f_q$, $o(\epsilon)=3$, and the ramification type of $f_{-3,1}$ is $(3/1,\,3/1)$.
\end{thm}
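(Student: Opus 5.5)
The plan is to verify each claim directly from the parametrization \eqref{ts} and the factorization of $G_{s,t}$ established just above, then read off the ramification type using Table~\ref{tb-ram}.

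\emph{First case: $1-u+u^2$ a nonsquare.} Since $1-u+u^2\ne 0$, we have $u\ne 0,1$ (both give the value $1$, a square), so $t(1+s+t)=(u-1)^3u\ne0$ and $f_{s,t}$ is a valid function of the form \eqref{fst}. Expanding $(X-u)^2(X^2+2(u-1)X-u)$ and comparing with $G_{s,t}$ for $s=3(u^2-u)$, $t=-u^3$ confirms the factorization. The quadratic $X^2+2(u-1)X-u$ has discriminant $4(1-u+u^2)$, a nonsquare, so it is irreducible over $\f_q$. Hence the only root of $G_{s,t}$ in $\f_q$ is $u$, and $u^3=-t$. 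Thus $f_{s,t}$ satisfies condition (i) for Class III-d, and in particular it is not in Class II by Theorem~\ref{Theorem 2} and Lemma~\ref{lem:2/1}. Its membership in Class III (rather than Class I) comes from the three distinct preimages $0,1,\infty$ of $\infty$.

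\emph{Ramification type in the first case.} Over $\overline\f_q$, $u$ is a point of index $3$, since $X^3+sX+t-3uX(X-1)=(X-u)^3$ by the computation in the proof of Theorem~\ref{thm:(-3,1)}. The two conjugate roots of the irreducible quadratic share a common index. By Table~\ref{tb-ram} with $p\ge5$ and three ramification points, the type must be $(2/*,2/*,3/*)$. Therefore the conjugate pair has index $2$ and degree $2$, giving type $(2/2,2/2,3/1)$.

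\emph{Second case: $q\equiv1\pmod 6$.} Here $\f_q$ contains a primitive cube root of unity $\epsilon$. Then $G_{-3,1}=(X^2-X+1)^2=((X+\epsilon)(X+\epsilon^2))^2$. The roots $-\epsilon,-\epsilon^2$ satisfy $x^3=-1=-t$, so condition (ii) for Class III-d holds. By Theorem~\ref{thm:(-3,1)}, $f_{-3,1}$ has no ramification point of index $2$, so Table~\ref{tb-ram} forces type $(3/1,3/1)$, both points being rational.

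The only step requiring care is the identification of the ramification indices, and it is settled by Table~\ref{tb-ram} together with the Hurwitz count.
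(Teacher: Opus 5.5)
Your proposal is correct and follows essentially the same route as the paper: you verify the factorization of $G_{s,t}$ from \eqref{ts}, use the discriminant $4(1-u+u^2)$ to get irreducibility, check the defining condition of Class III-d, and read off the ramification types from Table~\ref{tb-ram} (using Theorem~\ref{thm:(-3,1)} for $f_{-3,1}$), a step the paper leaves implicit and you spell out. The only slip is cosmetic: $t(1+s+t)=u^3(u-1)^3$, not $u(u-1)^3$ (the paper has the same typo), and this does not affect the conclusion $u\ne 0,1$.
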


We will see that in Theorem~\ref{thm:III-d}, all $f_{s,t}$ with ramification type $(2/2,\,2/2,\,3/1)$ are equivalent.

\subsection{Summary of Class III}\label{sec6.8}\ 

Summarizing the results of Sections~\ref{sec6.4} -- \ref{sec6.7}, we have the numbers of equivalence classes in Classes III-a through III-d tabulated in Table~\ref{tb:III}. 

\begin{table}[ht]
\caption{Number of equivalence classes in Class III}\label{tb:III}
   \renewcommand*{\arraystretch}{1.4}
    \centering
     \begin{tabular}{c|c|c|c|c|c}
         \hline
           &  III-a & III-b & III-c & III-d & III \\ \hline
         $\begin{array}{c}q\equiv 1\pmod{12}\cr q\equiv 5\pmod{12}\cr q\equiv 7\pmod{12}\cr q\equiv -1\pmod{12}\end{array}$  & $\displaystyle \ge\frac{q-1}2$ & $\displaystyle \ge\frac{q-5}2$ & $\begin{array}{c} 0\cr 1\cr 0\cr 1 \end{array}$ & $\begin{array}{c} \ge2\cr \ge1\cr \ge2\cr \ge1 \end{array}$ & $\ge q-1$ \\ \hline
     \end{tabular}
\end{table}
Since the total number of equivalence classes in Class III is known to be $q-1$, all the ``$\le$'' in Table~\ref{tb:III} are ``$=$''. Therefore, we have a complete classification of Class III. 

Let $\f_q^{(2)}=\{x^2:x\in\f_q\}$. For each $a\in\f_q\setminus\f_q^{(2)}$, find $(s,t)\in\Omega$ such that $G_{s,t}$ is irreducible over $\f_q$ and $\theta(s,t)=27+a$; let $\alpha_a=f_{s,t}$. For each $b\in\f_q^{(2)}\setminus\{0,1,9\}$, find $(s,t)\in\Omega$ such that $G_{s,t}=(X^2+a_1X+a_0)(X^2+b_1X+b_0)$, where $X^2+a_1X+a_0$ and $X^2+b_1X+b_0$ are irreducible over $\f_q$ and $M(a_0,a_1)=b$; let $\beta_b=f_{s,t}$. Choose $u\in\f_q$ such that $1-u+u^2\notin\f_q^{(2)}$ and let $\gamma_u=f_{3u(u-1),-u^3}$. Then the representatives of the equivalence classes in Class III are given in Table~\ref{tb:III-cls}.

\begin{table}[ht]
\caption{Classification of Class III, $p\ge 5$}\label{tb:III-cls}
   \renewcommand*{\arraystretch}{1.4}
    \centering
     \begin{tabular}{c|c}
         \hline
         class & representatives of equivalence classes \\ \hline
         III-a & $\alpha_a:\ a\in\f_q\setminus\f_q^{(2)}$ \\
         III-b & $\beta_b:\ b\in\f_q^{(2)}\setminus\{0,1,9\}$ \\
         III-c and III-d & $\gamma_u,\ f_{-3,1}$
            \\ \hline
     \end{tabular}
\end{table}

\section{Class III with $p=3$}\label{sec7}

We assume $p=3$ throughout this section. To obtain a classification of Class III in this case, we only have to modify some results and arguments in Section~\ref{sec6}.

First, Section~\ref{sec6.4} on Class III-a remains valid for $p=3$. 

For Class III-b, we need to slightly modify the proof of Theorem~\ref{thm1}. In that proof, we assume to the contrary that $D(X,Y)$ is not absolutely irreducible and arrive at $\epsilon=\pm1$. When $\epsilon=-1$, $\epsilon^{-1}+\epsilon=-2=4$, which is not a a contradiction for $p=3$. However, we then have
\begin{align*}
&2 X (2 \mu^2 X^2-d Y^2-d \mu^2 Y^2)=D_3=A_1B_2+A_2B_1\cr
=\,&-(X\mp \delta Y)(\mu X-\delta Y)^2-(X\pm\delta Y)(\mu X+\delta Y)^2\cr
=\,&-X\bigl[(\mu X-\delta Y)^2+(\mu X+\delta Y)^2\bigr]\pm\delta Y\bigl[(\mu X-\delta Y)^2-(\mu X+\delta Y)^2\bigr]\cr
=\,&-2X(\mu^2 X^2+\delta^2 Y^2)\pm\delta Y(-2\mu\delta XY)\cr
=\,&2 X (2 \mu^2 X^2-d Y^2\mp d \mu Y^2).
\end{align*}
It follows that $\mu^2=\pm\mu$, whence $\mu^2=0$ or 1, which is a contradiction. So Theorem~\ref{thm1} is still valid. Since $3=0$, the invariant $\mu^2$ takes values in $\f_q^{(2)}\setminus\{0,1\}$, so it takes $(q-3)/2$ different values. Corollary~\ref{cor2} still holds and $|\Theta_{\rm b}|=(q-3)/6$. For each $\theta\in\Theta_{\rm b}$, there are precisely 3 corresponding values of $\lambda$ (see Table~\ref{tb:the-lam}). Hence $(\theta,\lambda)$ also takes $(q-3)/2$ different values. Class III-b has precisely $(q-3)/2$ equivalence classes, so either $\mu^2$ or $(\theta,\lambda)$ determines the equivalence classes completely.

Class III-c is empty. If this class contains a function $f$, then its ramification type would be $(2/2,\,2/2)$ or $(3/2,\,3/2)$, which gives $\deg(\text{Diff}(\overline\f_q(X)/\overline\f_q(f))=2$ or $>4$. However, by the Hurwitz genus formula, $\deg(\text{Diff}(\overline\f_q(X)/\overline\f_q(f))=4$.

For Class III-d, Section~\ref{sec6.7} is modified as follows. Case (i) does not occur since $1-u+u^2=(u+1)^2$ is a square in $\f_q$. Case (ii) is still valid. Since $p=3$, we have $-u+2(u-1)X+X^2=(X-u)(X+1)$, which divides $X^3-u^3$ if and only if $u=-1$. In this case, $(s,t)=(0,1)$.

The classification of Class III is summarized in Table~\ref{tb:IIIp=3}, where $\alpha_a, \beta_b$ are defined in Section~\ref{sec6.8}.

\begin{table}[ht]
\caption{Classification of Class III, $p=3$}\label{tb:IIIp=3}
   \renewcommand*{\arraystretch}{1.4}
    \centering
     \begin{tabular}{c|c}
         \hline
         class & representatives of equivalence classes \\ \hline
         III-a & $\alpha_a:\ a\in\f_q\setminus\f_q^{(2)}$ \\
         III-b & $\beta_b:\ b\in\f_q^{(2)}\setminus\{0,1\}$ \\
         III-d & $f_{0,1}$
            \\ \hline
     \end{tabular}
\end{table}

\section{Direct Proofs}

We have seen that in Class III-a, the invariant $\theta$ determines the equivalence classes, and in Class III-b, the invariant $(\theta,\lambda)$ determines the equivalence classes with $(\theta,\lambda)\ne(54,5/2)$, that is, if $f_{s,t}, f_{s',t'}$ in Class III-a are such that $\theta(f_{s,t})=\theta(f_{s',t'})$, then $f_{s,t}\sim f_{s',t'}$, and if $f_{s,t}, f_{s',t'}$ in Class III-b are such that $(\theta(f_{s,t}),\lambda(f_{s,t}))=(\theta(f_{s',t'}),\lambda(f_{s',t'}))$, then $f_{s,t}\sim f_{s',t'}$. These conclusions rely the prior knowledge of the total number of equivalence classes of degree 3 rational functions, which was obtained by an entirely different method \cite{Hou-AA-2025}. Naturally, one asks if it is possible to prove the above conclusions directly without using the total number of equivalence classes of degree 3 rational functions.In this section, we provide such direct proofs using cross ratios. However, these proofs fail to cover a few exceptional cases, underpinning the importance of knowing the total number of equivalence classes in advance. The proofs in this section are not needed for our classification, but they allow us to look at the question from a different perspective. We will follow the notation of Section~\ref{sec5}.

\begin{thm}\label{pqthm}
Assume $p=\ch\f_q\ge 5$. If $f,g$ are in Case III-a and $\theta(f)=\theta(g)\ne 0$, then $f\sim g$ over $\f_q$.
\end{thm}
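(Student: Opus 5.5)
Suppose $f$ and $g$ lie in Class III-a with $\theta(f)=\theta(g)=\theta_0\ne0$. The plan is to build an explicit $\phi\in\pgl(2,\f_q)$ carrying the ramification points of $g$ to those of $f$. We first show that over $\overline\f_q$ the functions are equivalent. By the definition of $\theta$ and Theorem~\ref{Theorem 1} over $\overline\f_q$, both are $\overline\f_q$-equivalent to $(X^3+X^2+1/\theta_0)/X$. So there exist $\phi,\psi\in\pgl(2,\overline\f_q)$ with $g=\psi\circ f\circ\phi$. The task is then to descend from $\overline\f_q$ to $\f_q$.

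For the descent we use the ramification points and cross ratios (Section~\ref{sec5}). Let $r,r^q,r^{q^2},r^{q^3}$ be the ramification points of $f$, the roots of an irreducible quartic, and let $\rho,\dots,\rho^{q^3}$ be those of $g$. Every $\overline\f_q$-equivalence $\phi$ maps the ramification points of $g$ bijectively onto those of $f$, and the cross ratio is preserved. Since $\theta_0\neq 0,27$, the four points have a cross ratio whose $S_4$-orbit $[C]$ is a full orbit of size $6$ generically. We first want to show that the stabilizer of $f$ in $\pgl(2,\overline\f_q)$ is trivial, or at least small. We would do this by showing that a nontrivial self-equivalence of $(X^3+X^2+t)/X$ forces a special value of $t$; the computations in Proposition~\ref{Proposition 1.2} give this, since there $\phi$ was forced to be the identity up to the $\psi$ side. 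Granting that, the $\overline\f_q$-equivalence $\phi$ with $g=\psi\circ f\circ\phi$ is unique. Applying Frobenius $\sigma$ gives $g=\sigma(\psi)\circ f\circ\sigma(\phi)$, so uniqueness yields $\sigma(\phi)=\phi$ and hence $\phi\in\pgl(2,\f_q)$. This Galois-descent route is cleaner than tracking cross ratios, and it is what I would try first.

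The main obstacle is the uniqueness, namely the triviality of $\mathrm{Aut}(f)=\{(\phi,\psi):\psi\circ f\circ\phi=f\}$ over $\overline\f_q$. If that group is nontrivial, descent fails in general: we then get a cocycle valued in a finite group, and $f,g$ could be twisted forms of each other. To handle this, we would compute the automorphism group via the ramification points. An automorphism permutes the four index-$2$ ramification points and must preserve their images, the branch points, compatibly. Since a Möbius map is determined by $3$ points, the automorphism group embeds in the subgroup of $S_4$ preserving the cross-ratio orbit, which is the Klein group $V$ when $C\notin\{-1,2,1/2,-\epsilon\}$. For a nontrivial element of $V$, we would check via the explicit form $(X^3+X^2+t)/X$ whether it actually arises as a self-equivalence. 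I expect that it does only for special $t$; those would be the excluded exceptional values, and $\theta=0$ is presumably one of them. When the automorphism group is a nontrivial subgroup $A\le V$, we instead use the Frobenius cycle structure. On the ramification points of functions in Class III-a, Frobenius acts as a $4$-cycle, so the relevant $H^1(\mathrm{Gal},A)$ twists are distinguished by how Frobenius acts. Since both $f$ and $g$ have Frobenius acting as a $4$-cycle, the twist is trivial. Consequently we can still pick $\phi$ with $\sigma(\phi)=\phi$, namely the one matching $\rho^{q^i}\mapsto r^{q^{i+k}}$ for a suitable shift $k$, whose cross ratios agree because $\phi$ commutes with Frobenius.
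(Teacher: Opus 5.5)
Your primary route depends on the claim that $f$ has no nontrivial self-equivalences over $\overline\f_q$, and that claim is false. The proof of Proposition~\ref{Proposition 1.2} does not force $\phi$ to be the identity. Its Case~3.1 has genuine solutions: take $d=1$, $a=-1$, $c=2/(b+1)$ with $b(b+1)^2=-4t$. Then $A_3=A_2$, $B_3=B_2$ and $f_0=f_2=0$ all hold, which gives three nontrivial self-equivalences of $(X^3+X^2+t)/X$ for every $t\ne 1/27$. For example, $f=(X^3+X^2-1)/X$ satisfies $f\circ\frac{1-X}{1+X}=\frac{f+7}{f-1}$.

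The conceptual reason is as follows. For each $\pi\in V$ there are M\"obius maps $\beta'$ and $\alpha'$ permuting, according to $\pi$, the four ramification points and the four branch points respectively; they exist because $V$ fixes cross ratios. The branch points are distinct, since no fibre of a degree-$3$ map contains two points of index $2$. Lemma~\ref{T6.1} (total index $8>6$) then gives $\alpha'\circ f\circ\beta'=f$. So for every $f$ in Class III-a, $\mathrm{Aut}(f)$ acts on the ramification points through a group $\bar A\supseteq V$. This is the opposite of your expectation that elements of $V$ occur only for special $t$. Your guess about $\theta=0$ is also off: $\theta=0$ corresponds to the other normal form $(X^3+1)/X$, and in the paper $\theta\ne0$ enters only through Lemma~\ref{L6.2}, to rule out $l^2-l+1=0$.

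Your fallback does not close the gap, because its key assertion is false for a general $A\le V$. Let $\tau=(1\,2\,3\,4)$ be the Frobenius on the ramification points of $f$. Let $\bar a$ be the permutation induced by the cocycle value $a=\sigma(\phi)\circ\phi^{-1}$, which is a self-equivalence of $f$. The Frobenius of $g$, transported by $\phi$, is $\bar a^{-1}\tau$, and $g\sim f$ over $\f_q$ if and only if $\bar a^{-1}\tau$ is $\bar A$-conjugate to $\tau$. Now suppose $\bar A=\{1,(1\,3)(2\,4)\}$, the only proper nontrivial $\tau$-stable subgroup of $V$. Then $\bar A\tau=\{\tau,\tau^{-1}\}$ consists of two $4$-cycles that are not $\bar A$-conjugate, so a nontrivial twist would be invisible to cycle type.

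Your conclusion is correct once $\bar A\supseteq V$ is known; for $\bar A=V$ the nontrivial twist has a transposition as Frobenius, so it lands in Class II. The missing ingredient is therefore the rigidity argument above. You would also need to treat special cross ratios, e.g.\ $l=2$, which is compatible with $\sigma(l)=l/(l-1)$ and where $\bar A$ may be $\langle V,\tau\rangle$.

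Your last sentence is circular for the same reason. A M\"obius map matching ramification points in Frobenius order exists only if the cross ratios agree, and it gives an equivalence only after the branch points are also matched and Lemma~\ref{T6.1} is applied.

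The paper does exactly this by hand. Frobenius-compatibility of $l$ together with Lemma~\ref{L6.2} confines the induced permutation to $\langle V,\tau\rangle$. The order-preserving case descends at once. The order-reversing case is handled by constructing $\beta'$ and $\alpha'$ and invoking Lemma~\ref{T6.1}, which amounts to realizing $(1\,4)(2\,3)$ as a self-equivalence.

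If you add the step $\bar A\supseteq V$, your cohomological route works and is cleaner. Since the $4$-cycles in $\bar A\tau$ are all $\bar A$-conjugate for each of $V$, $\langle V,\tau\rangle$, $A_4$ and $S_4$, it would not even need $\theta\ne0$.
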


\begin{proof}
We know that $f\sim g$ over $\overline\f_q$, i.e., $f=\alpha\circ g\circ\beta$ for some $\alpha,\beta\in\pgl(2,\overline\f_q)$. (In fact, $f\sim g$ over $\f_{q^4}$.) Let $\sigma(x)=x^q$ for $x\in\overline\f_q$. Let $\rho,\sigma(\rho),\sigma^2(\rho),\sigma^3(\rho)$ be the ramification points of $f$ and $\omega,\sigma(\omega),\sigma^2(\omega),\sigma^3(\omega)$ be the ramification points of $g$. Then
\begin{equation}\label{beta}
\beta(\sigma^j(\rho))=\sigma^{i_j}(\omega),\quad 0\le j\le 3,
\end{equation}
where $(i_0,\dots,i_3)$ is a permutation of $(0,1,2,3)$. Let
\[
l=C(\rho,\sigma(\rho),\sigma^2(\rho),\sigma^3(\rho)).
\]
Then
\[
C(\sigma^{i_0}(\omega),\dots,\sigma^{i_3}(\omega))=C(\rho,\sigma(\rho),\sigma^2(\rho),\sigma^3(\rho))=l.
\]
Applying $\sigma$ to the above gives
\begin{align}\label{eq:l/l-1}
C(\sigma^{i_0+1}(\omega),\dots,\sigma^{i_3+1}(\omega))\,&=C(\sigma(\rho),\sigma^2(\rho),\sigma^3(\rho),\sigma^0(\rho))\\
&=C_{2341}(\rho,\sigma(\rho),\sigma^2(\rho),\sigma^3(\rho))\cr
&=\frac l{l-1}\kern4.8em \text{(by \eqref{C1234})}.\nonumber
\end{align}
On the other hand, $C(\sigma^{i_0+1}(\omega),\dots,\sigma^{i_3+1}(\omega))$ can be computed directly from \break
$C(\sigma^{i_0}(\omega),\dots,\sigma^{i_3}(\omega))$ ($=l$). 
Let $S_4$ acts on ${\overline\f_q}\!\!^4$ by permuting the components: $(x_1,\dots,x_4)^\pi=(x_{\pi(1)},\dots,x_{\pi(4)})$ for $\pi\in S_4$ and $(x_1,\dots,x_4)\in{\overline\f_q}\!\!^4$. (This is a right action.)
Write
\[
(\sigma^{i_0}(\omega),\dots,\sigma^{i_3}(\omega))=(\omega,\dots,\sigma^3(\omega))^\pi=(\sigma^{\pi(1)-1}(\omega),\dots,\sigma^{\pi(4)-1}(\omega)),
\]
where $\pi\in S_4$. Let $\tau=(1,2,3,4)\in S_4$. Then
\begin{align*}
(\sigma^{i_0+1}(\omega),\dots,\sigma^{i_3+1}(\omega))\,&=
(\sigma^{\pi(1)}(\omega),\dots,\sigma^{\pi(4)}(\omega))\cr
&=(\sigma^1(\omega),\dots,\sigma^4(\omega))^\pi\cr
&=(\omega,\dots,\sigma^3(\omega))^{\tau\pi}\cr
&=(\sigma^{i_0}(\omega),\dots,\sigma^{i_3}(\omega))^{\pi^{-1}\tau\pi}.
\end{align*}
Hence 
\[
C(\sigma^{i_0+1}(\omega),\dots,\sigma^{i_3+1}(\omega))=((\pi^{-1}\tau\pi)(C))(\sigma^{i_0}(\omega),\dots,\sigma^{i_3}(\omega)).
\]
In the above, $(\pi^{-1}\tau\pi)(C)=C_{j_1j_2,j_3,j_4}$, where $j_i=(\pi^{-1}\tau\pi)(i)$, $1\le i\le 4$.
Consider a Sylow 2-subgroup $G=\langle V,\,(1,2,3,4)\rangle=\langle V,\,(1,3)\rangle$ of $S_4$. Using \eqref{C1234}, it is routine, though rather tedious, to check that
\[
(\pi^{-1}\tau\pi)(C)=
\begin{cases}
\displaystyle\frac C{C-1}&\text{if}\ \pi\in G,\vspace{0.6em}\cr
\displaystyle\frac 1C\ \text{or}\ 1-C &\text{if}\ \pi\notin G.
\end{cases}
\]
Therefore,
\begin{equation}\label{eq:Ci0+1}
C(\sigma^{i_0+1}(\omega),\sigma^{i_1+1}(\omega),\sigma^{i_2+1}(\omega),\sigma^{i_3+1}(\omega))=
\begin{cases}
\displaystyle\frac l{l-1}&\text{if}\ \pi\in G,\vspace{0.6em}\cr
\displaystyle\frac 1 l\ \text{or}\ 1-l &\text{if}\ \pi\notin G.
\end{cases}
\end{equation}
By Lemma~\ref{L6.2} below, $l/(l-1)\notin\{1/l,\,1-l\}$. Hence by \eqref{eq:l/l-1} and \eqref{eq:Ci0+1}, $\pi\in G$, that is
\begin{align}\label{or}
(\sigma^{i_0}(\omega),\sigma^{i_1}(\omega),\sigma^{i_2}(\omega),\sigma^{i_3}(\omega))=\,&(\sigma^{i}(\omega),\sigma^{i+1}(\omega),\sigma^{i+2}(\omega),\sigma^{i+3}(\omega))\\
&\text{or}\ (\sigma^{i}(\omega),\sigma^{i-1}(\omega),\sigma^{i-2}(\omega),\sigma^{i-3}(\omega))\nonumber
\end{align}
for some $i$. For $\phi\in\overline\f_q(X)$ and $x_1,\dots,x_4\in\overline\f_q\cup\{\infty\}$, we write $\phi(x_1,\dots,x_4)=(\phi(x_1),\dots,\phi(x_4))$. Combining \eqref{beta} and \eqref{or} gives
\[
\beta(\rho,\dots,\sigma^3(\rho))=(\sigma^i(\omega),\dots,\sigma^{i+3}(\omega))\ \text{or}\ (\sigma^i(\omega),\dots,\sigma^{i-3}(\omega)).
\]

\medskip
{\bf Case 1.} Assume that $\beta(\rho,\dots,\sigma^3(\rho))=(\sigma^i(\omega),\dots,\sigma^{i+3}(\omega))$. 
For $0\le j\le 3$, we have
\[
(\sigma\circ\beta)(\sigma^j(\rho))=\sigma(\sigma^{j+i}(\omega))=\sigma^{j+i+1}(\omega)=\beta(\sigma^{j+1}(\rho))=(\beta\circ\sigma)(\sigma^j(\rho)).
\]
Hence $\sigma\circ\beta=\beta\circ\sigma$. On the other hand, we know that $\sigma\circ\beta=\bar\beta\circ\sigma$, where $\bar\beta$ is the result of $\sigma$ action on the coefficients of $\beta$. Therefore, $\bar\beta=\beta$, hence $\beta\in\pgl(2,\f_q)$. It follows easily that $\alpha\in\pgl(2,\f_q)$, and we are done.

\medskip
{\bf Case 2.} Assume that $\beta(\rho,\dots,\sigma^3(\rho))=(\sigma^i(\omega),\dots,\sigma^{i-3}(\omega))$. Then 
\[
C(\rho,\dots,\sigma^3(\rho))=C(\sigma^{i-3}(\omega),\dots,\sigma^{i}(\omega)).
\]
Hence there exists $\beta'\in\pgl(2,\overline \f_q)$ such that
\[
\beta'\bigl(\rho,\dots,\sigma^3(\rho)\bigr)=\bigl(\sigma^{i-3}(\omega),\dots,\sigma^{i}(\omega)\bigr).
\]
As in Case 1, $\beta'$ commute with $\sigma$, whence $\beta'\in\pgl(2,\f_q)$. 

Since $f=\alpha\circ g\circ\beta$, we have 
\begin{equation}\label{eq:f-sigma}
\bigl(f(\rho),\dots,f(\sigma^3(\rho))\bigr)=\alpha\bigl(g(\beta(\rho)),\dots, g(\beta(\sigma^3(\rho)))\bigr)=\alpha\bigl(g(\sigma^i(\omega)),\dots,g(\sigma^{i-3}(\omega))\bigr).
\end{equation}

\medskip
{\bf Case 2.1.} Assume that $f(\sigma^0(\rho)),\dots,f(\sigma^3(\rho))$ are distinct. From \eqref{eq:f-sigma},
\[
C\bigl(f(\rho),\dots,f(\sigma^3(\rho))\bigr)=C\bigl(g(\sigma^{i-3}(\omega)),\dots,g(\sigma^{i}(\omega))\bigr).
\]
Hence there exist $\alpha'\in\pgl(2,\overline \f_q)$ such that
\begin{equation}\label{eq:alpha'}
\bigl(f(\rho),\dots,f(\sigma^3(\rho))\bigr)=\alpha'\bigl(g(\sigma^{i-3}(\omega)),\dots,g(\sigma^{i}(\omega))\bigr).
\end{equation}
Both $f$ and $\alpha'\circ g\circ \beta'$ map $(\rho,\dots,\sigma^3(\rho))$ to $\bigl(f(\rho),\dots,f(\sigma^3(\rho))\bigr)$. By Lemma~\ref{T6.1} below, $f=\alpha'\circ g\circ\beta'$. It follows that $\alpha'\in\pgl(2,\f_q)$.

\medskip
{\bf Case 2.2.} Assume that $f(\rho),\dots,f(\sigma^3(\rho))$ are not all distinct. In this case, we only have to show that there also exits $\alpha'\in\pgl(2,\overline\f_q)$ such that $\eqref{eq:alpha'}$ holds. 

Note that $f(\sigma^j(\rho))=\sigma^j(f(\rho))$ (we define $\sigma(\infty)=\infty$). It follows that $\sigma^2(f(\rho))=f(\rho)$, hence
\[
\bigl(f(\rho),\dots,f(\sigma^3(\rho))\bigr)=
\bigl(f(\rho),f(\sigma(\rho)),f(\rho),f(\sigma(\rho))\bigr).
\]
By \eqref{eq:f-sigma}, we also have
\[
\bigl(g(\sigma^{i-3}(\omega)),\dots,g(\sigma^{i}(\omega))\bigr)=
\bigl(g(\sigma^{i-3}(\omega)),g(\sigma^{i-2}(\omega)),g(\sigma^{i-3}(\omega)),g(\sigma^{i-2}(\omega))\bigr),
\]
and $f(\rho)=f(\sigma(\rho))$ if and only if $g(\sigma^{i-3}(\omega))=g(\sigma^{i-2}(\omega))$. Choose $\alpha'\in\pgl(2,\overline\f_q)$ such that $\alpha'(f(\rho))=g(\sigma^{i-3}(\omega))$ and $\alpha'(f(\sigma(\rho)))=g(\sigma^{i-2}(\omega))$. Then \eqref{eq:alpha'} holds.
\end{proof}

\begin{lem}\label{L6.2}
Assume $p\ge 5$. Assume that $f_{s,t}$ ($s\ne 0$) has four ramification points $\rho,\sigma(\rho),\sigma^2(\rho),\sigma^3(\rho)$, where $\rho\in\f_{q^4}\setminus\f_{q^2}$. Let $(x_1,\dots,x_4)=(\rho,\dots,\sigma^3(\rho))$, and let
\[
l=C(x_1,x_2,x_3,x_4).
\]
Then $l\in\f_{q^2}$ and $l/(l-1)\notin\{1/l,\,1-l\}$.
\end{lem}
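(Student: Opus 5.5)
First I show $l\in\f_{q^2}$. Apply $\sigma^2$: it sends $(x_1,x_2,x_3,x_4)$ to $(x_3,x_4,x_1,x_2)$, which is the permutation $(1,3)(2,4)\in V$. Since $V$ stabilizes $C$, we get $\sigma^2(l)=C(x_3,x_4,x_1,x_2)=l$. Hence $l\in\f_{q^2}$. Note also that $l\notin\{0,1,\infty\}$ because the four points are distinct.

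Next I reduce the second claim to algebra. The condition $l/(l-1)=1/l$ means $l^2-l+1=0$. The condition $l/(l-1)=1-l$ means $l=-(l-1)^2$, i.e.\ $l^2-l+1=0$ again. So it suffices to show $l^2-l+1\ne0$, i.e.\ $l$ is not a primitive sixth root of unity. Such $l$ are precisely the cross ratios fixed by the order-3 elements of the $S_3$ action in \eqref{C1234}, i.e.\ the ``equianharmonic'' quadruples.

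To rule this out, I use the Frobenius structure. By \eqref{eq:l/l-1}, $\sigma(l)=l/(l-1)$. If $l^2-l+1=0$, then $l-1=l^2$, so $\sigma(l)=l^{-1}=\bar l$, the other root of $X^2-X+1$; this is consistent, so Frobenius alone gives no contradiction. I therefore compute $l^2-l+1$ (or more conveniently the $S_3$-symmetric $j$-type expression
\[
\frac{(l^2-l+1)^3}{l^2(l-1)^2},
\]
which is symmetric in the roots of $G_{s,t}$) as a rational function of the coefficients of $G_{s,t}$. Concretely, for a quartic with roots $x_i$, the product $\prod(\text{cross ratio forms})$ yields the invariant $I=12a_4a_0-3a_3a_1+a_2^2$ of the binary quartic. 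The quadruple is equianharmonic iff $I=0$. For $G_{s,t}=X^4-2X^3-sX^2-2tX+t$, this gives
\[
I=12t-3\cdot(-2)(-2t)+s^2=s^2 .
\]
So $l^2-l+1=0$ iff $s=0$, which is excluded by hypothesis. Here $\infty$ is not a root, so the binary-quartic formula applies directly.

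The main obstacle is getting the invariant $I$ correct with the right normalization and checking that its vanishing is exactly equivalent to $l^2-l+1=0$ for some ordering. Since $l^2-l+1$ is invariant up to the $S_3$-action, the vanishing is independent of ordering, so the equivalence is standard. It needs $p\ne2,3$, which matches the hypothesis $p\ge5$. I would verify the identity $I=s^2$ symbolically, e.g.\ by the Mathematica style of Appendix~B.
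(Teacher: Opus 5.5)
Your proof is correct, and its core coincides with the paper's. Both reduce the second claim to $l^2-l+1\neq 0$ and then identify this quantity, via symmetric functions of the roots, with the invariant $s_2^2-3s_1s_3+12s_4$, which equals $s^2$ for $G_{s,t}$.

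The two arguments differ only in how that link is made. The paper goes through Frobenius: it first shows $l\notin\f_q$ (otherwise $l=\sigma(l)=l/(l-1)$ gives $l=2$ and then $3=0$). It then deduces $l\,l^q=1$ and expands $1-l\,l^q$ over a product of root differences. You instead appeal directly to the classical fact that $I=0$ characterizes equianharmonic quadruples. That needs no Frobenius input, so the second claim holds for any ordering of four distinct roots of $G_{s,t}$ with $s\neq 0$.

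Your ``main obstacle'' reduces to a one-line check. Put $A=(x_1-x_2)(x_3-x_4)$, $B=(x_1-x_3)(x_2-x_4)$ and $E=(x_1-x_4)(x_2-x_3)$. Then $B=A+E$, $l=B/E$ and $l-1=A/E$, so
\[
l^2-l+1=\frac{B^2-BE+E^2}{E^2}=\frac{A^2+AE+E^2}{E^2}=\frac{s_2^2-3s_1s_3+12s_4}{(x_1-x_4)^2(x_2-x_3)^2}.
\]
This is an identity over $\mathbb Z$, so your $p\neq 2,3$ caveat is not needed for this step.

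Incidentally, $\sigma(l)=l/(l-1)$ always holds, so the paper's $1-l\,l^q$ is just $-(l^2-l+1)/(l-1)$. Its detour through $l\notin\f_q$ therefore amounts to this same identity.
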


\begin{proof}
We have
\[
\sigma^2(l)=C(x_3,x_4,x_1,x_2)=C(x_1,x_2,x_3,x_4)=l,
\]
whence $l\in\f_{q^2}$.

Assume to the contrary that $l/(l-1)\in\{1/l,\,1-l\}$. It follows that
\begin{equation}\label{lambda-quadraic}
l^2-l+1=0.
\end{equation}
We claim that $l\notin\f_q$. Otherwise,
\[
l=C(x_1,x_2,x_3,x_4)=C(x_2,x_3,x_4,x_1)=\frac l{l-1},
\]
so $l=2$. Then \eqref{lambda-quadraic} becomes $3=0$, which is a contradiction. Now it follows from \eqref{lambda-quadraic} that $l\,l^q=1$. We have
\[
0=1-l\,l^q=\frac{h(x_1,x_2,x_3,x_4)}{(x_1-x_2)(x_2-x_3)(x_1-x_4)(x_3-x_4)},
\]
where
\[
h(x_1,x_2,x_3,x_4)=(x_1-x_3)^2(x_2-x_4)^2-(x_1-x_2)(x_2-x_3)(x_1-x_4)(x_3-x_4).
\]
$h(x_1,x_2,x_3,x_4)$ is symmetric in $x_1,\dots,x_4$, and hence it can be expressed as a polynomial in the elementary symmetric functions $s_1,\dots,s_4$ in $x_1,\dots,x_4$:
\[
h(x_1,\dots,x_4)=s_2^2-3s_1s_3+12s_4.
\]
Recall that $s_1=2$, $s_2=-s$, $s_3=2t$, $s_4=t$, which gives
\[
h(x_1,\dots,x_4)=s^2.
\]
Thus $s=0$, which is a contradiction.
\end{proof}

\begin{lem}\label{T6.1}
Let $F$ be any field, $f,g\in F(X)$, $\deg f=\deg g=r$. Assume that $a_1,\dots,a_k, b_1,\dots,b_k\in \overline F\cup\{\infty\}$ are such that $a_1,\dots,a_k$ are distinct, $a_i$ is a ramification point of both $f$ and $g$ with index $e_i$, and $f(a_i)=g(a_i)=b_i$. If $e_1+\cdots+e_k>2r$, then $f=g$.
\end{lem}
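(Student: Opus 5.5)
Suppose $f\ne g$; the aim is to show $e_1+\cdots+e_k\le 2r$. The natural route is to look at the difference $f-g$ and count its zeros against its degree. First I would normalize with an element $\phi\in\pgl(2,\overline F)$ acting on the target. Choose $c\in\overline F$ different from every $b_i$ and different from $f(\infty)$ and $g(\infty)$ as values to avoid, and replace $f,g$ by $1/(f-c),1/(g-c)$. This keeps ramification indices and the condition $f(a_i)=g(a_i)$, and after it no $b_i$ is $\infty$. A similar translation or inversion of the source variable lets us assume no $a_i$ is $\infty$. Everything may be done over $\overline F$, since equality of $f$ and $g$ over $\overline F$ is equality in $F(X)$.

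Now write $f=P/Q$ and $g=R/S$ in lowest terms with $\deg P,\deg Q,\deg R,\deg S\le r$, and set $W=PS-QR$. Then $\deg W\le 2r$, and $W\ne 0$ because $f\ne g$. The key local claim is that $(X-a_i)^{e_i}\mid W$ for each $i$. Since $b_i\ne\infty$, we have $Q(a_i)\ne0$ and $S(a_i)\ne0$. Because $a_i$ is a ramification point of index $e_i$ with branch value $b_i$ for both functions, $(X-a_i)^{e_i}$ divides $P-b_iQ$ and $R-b_iS$. Hence
\[
W=(P-b_iQ)S-Q(R-b_iS)
\]
is divisible by $(X-a_i)^{e_i}$.

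Since the $a_i$ are distinct, $\prod_i(X-a_i)^{e_i}$ divides $W$, which gives $\sum e_i\le\deg W\le 2r$. This contradicts the hypothesis, so $f=g$.

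The only delicate point is making sure the normalization can be chosen so that no $a_i$ and no $b_i$ is $\infty$, because the divisibility claim uses finite values at finite points. This is harmless, since $\overline F$ is infinite and $\pgl$ transformations on both sides preserve all the data: the indices, the equal values, and the degrees.
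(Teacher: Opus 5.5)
Your proposal is correct and is essentially the paper's proof: after normalizing by elements of $\pgl(2,\overline F)$ on both sides so that no $a_i$ or $b_i$ is $\infty$, the paper likewise shows $PQ_1\equiv P_1Q$ modulo each $(X-a_i)^{e_i}$, hence modulo their product, and concludes $f=g$ from $\sum e_i>2r\ge\deg(PQ_1-P_1Q)$. Your extra requirement that $c$ avoid $f(\infty)$ and $g(\infty)$ is unnecessary but harmless.
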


\begin{proof}
There exist $\alpha,\beta\in\pgl(2,\overline F)$ such that $\infty\notin\beta(\{a_1,\dots,a_k\})$ and $\infty\notin\alpha(\{b_1,\dots,b_k\})$. Replacing $f$ and $g$ by $\alpha\circ f\circ\beta^{-1}$ and $\alpha\circ g\circ\beta^{-1}$, $a_i$ by $\beta(a_i)$, and $b_i$ by $\alpha(b_i)$, we may assume $\infty\notin\{a_1,\dots,a_k,b_1,\dots,b_k\}$. Write $f(X)=P(X)/Q(X)$ and $g(X)=P_1(X)/Q_1(X)$, where $P,Q,P_1,Q_1\in\overline F(X)$, $\gcd(P,Q)=1=\gcd(P_1,Q_1)$. Then for all $1\le i\le k$,
\[
P(X)-b_iQ(X)\equiv 0\pmod{(X-a_i)^{e_i}},
\]
\[
P_1(X)-b_iQ_1(X)\equiv 0\pmod{(X-a_i)^{e_i}}.
\]
It follows that
\[
P(X)Q_1(X)\equiv P_1(X)Q(X)\pmod{(X-a_i)^{e_i}}.
\]
Thus
\[
P(X)Q_1(X)\equiv P_1(X)Q(X)\pmod{\prod_{i=1}^k(X-a_i)^{e_i}}.
\]
Since $e_1+\cdots+e_k>2r$, we have
\[
P(X)Q_1(X)=P_1(X)Q(X),
\]
that is, $f=g$.
\end{proof}

\begin{thm}\label{T6.3}
Let $q$ be arbitrary. Assume that $f,g$ are Case III-b such that $\theta(f)=\theta(g)$ and $\lambda(f)=\lambda(g)\notin\{1,5/2\}$. Then $f\sim g$.
\end{thm}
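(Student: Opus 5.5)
I will follow the template of the proof of Theorem~\ref{pqthm}. Since $\theta(f)=\theta(g)$ and both functions have a ramification point of index $2$, the definition of $\theta$ (via Theorem~\ref{Theorem 1} over $\overline\f_q$) gives $f\sim g$ over $\overline\f_q$. Write $f=\alpha\circ g\circ\beta$ with $\alpha,\beta\in\pgl(2,\overline\f_q)$. Let the ramification points of $f$ be $\rho_1,\sigma(\rho_1),\rho_2,\sigma(\rho_2)$ and those of $g$ be $\omega_1,\sigma(\omega_1),\omega_2,\sigma(\omega_2)$, where $\sigma(x)=x^q$ and $\rho_i,\omega_i\in\f_{q^2}\setminus\f_q$. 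Then $\beta$ maps $\{\rho_1,\sigma\rho_1,\rho_2,\sigma\rho_2\}$ bijectively onto $\{\omega_1,\sigma\omega_1,\omega_2,\sigma\omega_2\}$. The aim is to replace $\beta$ by some $\beta'\in\pgl(2,\overline\f_q)$ that carries this quadruple onto the image quadruple \emph{compatibly with $\sigma$}, that is, with $\beta'(\sigma x)=\sigma(\beta'(x))$. As in Case~1 of the proof of Theorem~\ref{pqthm}, this forces $\bar\beta'=\beta'$, hence $\beta'\in\pgl(2,\f_q)$.

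Let $l=C(\rho_1,\sigma\rho_1,\rho_2,\sigma\rho_2)$ and $l'=C(\omega_1,\sigma\omega_1,\omega_2,\sigma\omega_2)$. Both $l$ and $l'$ lie in $\f_q$, since applying $\sigma$ swaps the points within each pair and $(1,2)(3,4)\in V$ fixes $C$. The hypothesis $\lambda(f)=\lambda(g)$ gives $l+l^{-1}=l'+l'^{-1}$, so $l'=l$ or $l'=l^{-1}$.
\begin{itemize}
\item If $l'=l^{-1}$, I reorder the target as $(\sigma\omega_1,\omega_1,\omega_2,\sigma\omega_2)$. By \eqref{C1234}, $C_{2134}=1/C$, so this reordered quadruple has cross ratio $l$.
\item In either case, the fact recalled in Section~\ref{sec5} then yields $\beta'\in\pgl(2,\overline\f_q)$ sending $(\rho_1,\sigma\rho_1,\rho_2,\sigma\rho_2)$ to a target quadruple $(y_1,\sigma y_1,y_2,\sigma y_2)$ with $\{y_1,y_2\}$ one representative from each pair.
\item Then $\sigma\circ\beta'$ and $\beta'\circ\sigma$ agree on four distinct points, so they are equal, and $\beta'$ is defined over $\f_q$.
\end{itemize}

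Next I need $\alpha'$ with $f=\alpha'\circ g\circ\beta'$. Here Lemma~\ref{T6.1} applies: the four points have index $2$ each, so the indices sum to $8>6=2\cdot 3$. It therefore suffices to find $\alpha'\in\pgl(2,\overline\f_q)$ with $\alpha'\bigl(g(\beta'(x))\bigr)=f(x)$ for the four ramification points $x$. If the four branch values $f(\rho_1),\sigma f(\rho_1),f(\rho_2),\sigma f(\rho_2)$ are distinct, this amounts to equality of two cross ratios, namely that of the branch points of $f$ and that of the correspondingly ordered branch points of $g$. If some branch values coincide, the compatibility is automatic by the degenerate argument of Case~2.2 of Theorem~\ref{pqthm}. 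Once $f=\alpha'\circ g\circ\beta'$ with $\beta'$ defined over $\f_q$, it follows that $\alpha'\in\pgl(2,\f_q)$, which gives $f\sim g$ over $\f_q$.

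The main obstacle is the branch-point cross ratio in the distinct case. From $f=\alpha\circ g\circ\beta$ we know the branch cross ratio of $f$ equals that of $g$ under the ordering induced by the \emph{original} $\beta$. The original $\beta$ and the new $\beta'$ differ by a permutation $\pi$ of the four points that preserves the pair structure, i.e.\ $\pi\in\langle V,(1,2)\rangle$ up to relabeling. If $\pi\in V$, the branch cross ratio is unchanged and we are done. Otherwise the branch cross ratio is inverted or changed by another element of the orbit \eqref{C1234}, and then the ramification cross ratio $l$ must equal a different element of its own $S_4$-orbit as well. I would show that this forces $l\in\{-1,2,1/2\}$ or $l^2-l+1=0$, i.e.\ $\lambda=l+l^{-1}\in\{-2,5/2,1\}$. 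The excluded values $1$ and $5/2$ come directly from the hypothesis. The value $-2$ (from $l=-1$) needs separate handling: either it cannot occur in Class III-b, or $\beta$ itself can be chosen to commute with $\sigma$ in that case. Resolving the $l=-1$ case is the step I expect to be hardest.
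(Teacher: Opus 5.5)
\textbf{There is a genuine gap.} Your construction of a pair-compatible $\beta'\in\pgl(2,\f_q)$ is correct. The proof is incomplete exactly at the step you flag, and that case is not vacuous. The value $\lambda=-2$, i.e.\ $l=l'=-1$, is allowed by the hypothesis and does occur in Class III-b: it arises for $\mu^2=3$ when $q\equiv\pm1\pmod{12}$ (Table~\ref{tb:the-lam}).

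When $l=l'=-1$ there are two pair-compatible targets with cross ratio $l$, namely $(\omega_1,\omega_1^q,\omega_2,\omega_2^q)$ and $(\omega_1^q,\omega_1,\omega_2,\omega_2^q)$. Your argument gives no way to decide which of the two resulting maps $\beta'$ admits an $\alpha'$. For the wrong choice the branch cross ratios need not agree, and then Lemma~\ref{T6.1} has nothing to apply to. Separately, your claim that $\beta$ and $\beta'$ differ by a permutation preserving the pair structure is unjustified. If it were known, $\beta$ itself would already commute with $\sigma$ and $\beta'$ would be unnecessary.

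\textbf{The missing idea is to apply the hypothesis $\lambda(f)=\lambda(g)$ directly to the original $\beta$.} Write $\beta(\rho_1,\rho_1^q,\rho_2,\rho_2^q)=(x_{\pi(1)},\dots,x_{\pi(4)})$, where $(x_1,\dots,x_4)=(\omega_1,\omega_1^q,\omega_2,\omega_2^q)$ and $\pi\in S_4$. The stabilizer of $C+C^{-1}$ in $S_4$ is $H=\langle V,(1,2)\rangle$. On the cosets $H$, $(1,3)H$, $(1,4)H$, the quantity $l+l^{-1}=(C+C^{-1})(x_{\pi(1)},\dots,x_{\pi(4)})$ equals $l'+\frac1{l'}$, $\frac{l'}{l'-1}+\frac{l'-1}{l'}$, and $1-l'+\frac1{1-l'}$ respectively. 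Since $l+l^{-1}=l'+l'^{-1}$, having $\pi\notin H$ would force $l'\in\{2,1/2\}$ or $l'^2-l'+1=0$, that is, $\lambda\in\{5/2,1\}$.

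The value $l'=-1$ never enters this computation. The symmetry $C\mapsto 1/C$ that fixes it comes from the coset $(1,2)V$, which lies inside $H$. Hence $\pi\in H$, so $\beta$ sends conjugate pairs to conjugate pairs. It therefore commutes with $\sigma$ and lies in $\pgl(2,\f_q)$, and so does $\alpha$. Neither $\beta'$ nor Lemma~\ref{T6.1} is needed. This is the paper's argument.

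Within your own framework the same observation closes the gap. Under the hypothesis, $l'=-1$ forces $p\ne3$, since in characteristic $3$ one has $-2=1$. For $l'=-1$ and $p\ne3$, the permutations of the four points induced by M\"obius maps form exactly $H$. Hence $\pi\in H$, and you should simply take $\beta'=\beta$.
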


\begin{proof}
Consider a Sylow 2-subgroup $H=\langle V,\, (1,2)\rangle$ of $S_4$. For each $\pi\in S_4$, we have
\begin{equation}\label{pi}
\pi\Bigl(C+\frac 1C\Bigr)=
\begin{cases}
\displaystyle C+\frac 1C&\text{if}\ \pi\in H,\vspace{0.4em}\cr
\displaystyle \frac C{C-1}+\frac {C-1}C&\text{if}\ \pi\in (1,3)H,\vspace{0.4em}\cr
\displaystyle 1-C+\frac 1{1-C}&\text{if}\ \pi\in (1,4)H.
\end{cases}
\end{equation}

Since $\theta(f)=\theta(g)$, we have $f\sim g$ over $\overline\f_q$, hence there exist $\alpha,\beta\in\pgl(2,\overline\f_q)$ such that $f=\alpha\circ g\circ\beta$. Let $\rho_1,\rho_1^q,\rho_2,\rho_2^q$ be the ramification points of $f$ and $\omega_1,\omega_1^q,\omega_2,\omega_2^q$ be the ramification points of $g$, and let $l=C(\rho_1,\rho_1^q,\rho_2,\rho_2^q)$ and $l'=C(\omega_1,\omega_1^q,\omega_2,\omega_2^q)$. (Clearly, $l,l'\in\f_q$.)
Note that $\beta(\rho_1,\rho_1^q,\rho_2,\rho_2^q)$ is a permutation of $(\omega_1,\omega_1^q,\omega_2,\omega_2^q)$. Write $(\omega_1,\omega_1^q,\omega_2,\omega_2^q)=(x_1,x_2,x_3,x_4)$ and $\beta(\rho_1,\rho_1^q,\rho_2,\rho_2^q)=(x_{\pi(1)},\dots,x_{\pi(4)})$, where $\pi\in S_4$. Then
\[
l=C(\rho_1,\rho_1^q,\rho_2,\rho_2^q)=C(x_{\pi(1)},\dots,x_{\pi(4)}).
\]
Since $\lambda(g)=\lambda(f)$, using \eqref{pi}, we have 
\begin{align}\label{u+1/u}
{l'}+\frac 1{l'}\,&=l+\frac 1 l=C(x_{\pi(1)},\dots,x_{\pi(4)})+C(x_{\pi(1)},\dots,x_{\pi(4)})^{-1}\\
&=\begin{cases}
\displaystyle {l'}+\frac 1{l'}&\text{if}\ \pi\in H=\langle V,(1,2)\rangle,\vspace{0.4em}\cr
\displaystyle \frac {l'}{{l'}-1}+\frac {{l'}-1}{l'}&\text{if}\ \pi\in (1,3)H,\vspace{0.4em}\cr
\displaystyle 1-{l'}+\frac 1{1-{l'}}&\text{if}\ \pi\in (1,4)H,
\end{cases}\nonumber
\end{align}
where the last equality follows from \eqref{pi}. Note that
\begin{align*}
&{l'}+\frac 1{l'}\in\Bigl\{\frac{l'}{{l'}-1}+\frac {{l'}-1}{l'},\, 1-{l'}+\frac 1{1-{l'}}\Bigr\}\cr
\Leftrightarrow\ &{l'}\in\Bigl\{\frac{l'}{{l'}-1},\,\frac {{l'}-1}{l'},\, 1-{l'},\,\frac 1{1-{l'}}\Bigr\}\cr
\Leftrightarrow\ &{l'}\in\{2,\,1/2,\,-\epsilon,\,-\epsilon^{-1}\}\quad\text{where $o(\epsilon)=3$}\cr
\Leftrightarrow\ & {l'}+\frac 1{l'}\in\{1,\,5/2\}.
\end{align*}
Therefore, by assumption,
\[
{l'}+\frac 1{l'}\notin\Bigl\{\frac{l'}{{l'}-1}+\frac {{l'}-1}{l'},\, 1-{l'}+\frac 1{1-{l'}}\Bigr\}.
\]
It then follows from \eqref{u+1/u} that $\pi\in H$. A permutation of $(\omega_1,\omega_1^q,\omega_2,\omega_2^q)$ by $\pi$ is of the form $(\nu_1,\nu_1^q,\nu_2,\nu_2^q)$. We have
\[
\beta(\rho_1,\rho_1^q,\rho_2,\rho_2^q)=(\nu_1,\nu_1^q,\nu_2,\nu_2^q),
\]
and it follows that 
$\beta$ commute with $\sigma=(\ )^q$. Hence $\beta\in\pgl(2,\f_q)$. Consequently, it follows from $f=\alpha\circ g\circ\beta$ that $\alpha\in\pgl(2,\f_q)$.
\end{proof}

\section{Examples}

We consider two examples with $q=3^3$ and $5^2$, respectively. In each case, we enumerate the representatives of the equivalence classes of degree 3 rational functions explicitly.

\begin{exmp}\rm
Let $\f_{3^3}=\f_3(\alpha)$, where $\alpha^3-\alpha+1=0$. A set of representatives of the equivalence classes of degree 3 rational functions over $\f_{3^3}$ is given below.

\medskip
Class I: $X^3$, $X^3-\alpha X$.

\medskip
Class II: $\displaystyle\frac{X^3+1}X$;  $\displaystyle\frac{X^3+X^2+t}X,\ t\in\f_{3^3}^*$.

\medskip
Class III-a: $f_{s,t}$, where $(s,t)$ and the corresponding values of $\theta$ are given in Table~\ref{tb:3^3-case3a}.

\begin{table}[ht]
\caption{$q=3^3$, Class III-a: values of $(s,t)$ and $\theta$}\label{tb:3^3-case3a}
   \renewcommand*{\arraystretch}{1.4}
    \centering
     \begin{tabular}{c|c}
         \hline
         $(s,t)$ & $\theta$ \\ \hline
        $(\A^2, \A^2)$ & $2 + \A + 2 \A^2$  \\
        $(1 + \A + 2 \A^2, \A^2)$ & $2 + \A$  \\
        $(1 + 2 \A, \A^2)$ & $2$ \\
        $(2, \A^2)$ & $1 + \A$ \\
        $(2 + \A, \A^2)$ & $2 \A + 2 \A^2$ \\
        $(2 + 2 \A + 2 \A^2, \A^2)$ & $ 2 \A^2$ \\
        $(2 \A + \A^2, 2 \A^2)$ & $2 + 2 \A + \A^2$ \\
        $(1 + 2 \A + \A^2, 2 \A^2)$ & $\A + 2 \A^2$ \\
        $(\A^2, \A)$ & $1 + 2 \A^2$ \\
        $(\A + \A^2, \A)$ & $2 + \A + \A^2$ \\
        $(2 + \A, \A + \A^2)$ & $1 + \A^2$ \\
        $(2 \A, 2 \A)$ & $2 + 2 \A + 2 \A^2$ \\ 
        $(1 + 2 \A + 2 \A^2, 2 \A)$ & $\A$ \\ 
         \hline
     \end{tabular}
\end{table}

\medskip
Class III-b: $f_{s,t}$, where $(s,t)$ and the corresponding values of $\mu^2$ are given in Table~\ref{tb:3^3-case3b}.

\begin{table}[ht]
\caption{$q=3^3$, Class III-b: values of $(s,t)$ and $\mu^2$}\label{tb:3^3-case3b}
   \renewcommand*{\arraystretch}{1.4}
    \centering
     \begin{tabular}{c|c}
         \hline
         $(s,t)$ & $\mu^2$ \\ \hline
         $(\A+\A^2, \A^2)$ & $2+2\A$ \\
         $(1+2\A^2,\A^2)$ & $\A^2$ \\
         $(2+2\A,\A^2)$& $\A+\A^2$ \\
         $(\A,2\A^2)$& $1+2\A$ \\
         $(1+\A+\A^2,2\A^2)$& $1+\A+2\A^2$ \\
         $(1+2\A+2\A^2,2\A^2)$& $2\A$ \\
         $(2\A^2,\A)$& $2\A+\A^2$ \\
         $(2\A,\A+2\A^2)$& $1+2\A+2\A^2$ \\
         $(2+\A+2\A^2,\A+2\A^2)$& $1+2\A+\A^2$ \\
         $(2+2\A,2\A+\A^2)$& $2+2\A^2$ \\
         $(1+2\A,2\A+2\A^2)$& $2+\A^2$ \\
         $(2+2\A+2\A^2,2\A+2\A^2)$& $1+\A+\A^2$ \\
         \hline
     \end{tabular}
\end{table}

\medskip
Class III-d: $f_{0,1}$.
\end{exmp}

\begin{exmp}\rm
Let $\f_{5^2}=\f_5(\alpha)$, where $\alpha^2+\alpha+1=0$. A set of representatives of the equivalence classes of degree 3 rational functions over $\f_{5^2}$ is given below.

\medskip
Class I: $\displaystyle\frac{X^3+(1-2\alpha)X}{(1-\alpha)X^2+1}$.

\medskip
Class II: $\displaystyle\frac{X^3+\alpha^i}X,\ i=0,1,2$; $\displaystyle{X^3+X^2+t}X,\ t\in\f_{5^2}^*$.

\medskip
Class III-a: $f_{s,t}$, where $(s,t)$ and the corresponding values of $\theta$ are given in Table~\ref{tb:5^2-case3a}.

\begin{table}[ht]
\caption{$q=5^2$, Class III-a: values of $(s,t)$ and $\theta$}\label{tb:5^2-case3a}
   \renewcommand*{\arraystretch}{1.4}
    \centering
     \begin{tabular}{c|c}
         \hline
         $(s,t)$ & $\theta$ \\ \hline
         $(\A, \A)$ & $3+2\A$ \\
         $(4\A,\A)$ & $1+\A$ \\
         $(1,\A)$& $1+3 \A$ \\
         $(2+3\A,\A)$& $\A$ \\
         $(3+2\A,\A)$& $4+4\A$ \\
         $(4,\A)$& $4\A$ \\
         $(1+3\A,2\A)$& $4+\A$ \\
         $(2,2\A)$& $4+3 \A$ \\
         $(\A,3\A)$& $1+2\A$ \\
         $(4+3\A,3\A)$& $3+4\A$ \\
         $(3\A,1+3\A)$& $2\A$ \\
         $(1+2\A,1+3\A)$& $3+3 \A$ \\ \hline
     \end{tabular}
\end{table}

\medskip
Class III-b: $f_{s,t}$, where $(s,t)$ and the corresponding values of $\mu^2$ are given in Table~\ref{tb:5^2-case3b}.

\begin{table}[ht]
\caption{$q=5^2$, Class III-b: values of $(s,t)$ and $\mu^2$}\label{tb:5^2-case3b}
   \renewcommand*{\arraystretch}{1.4}
    \centering
     \begin{tabular}{c|c}
         \hline
         $(s,t)$ & $\mu^2$ \\ \hline
         $(2+4\A, \A)$ & $\A$ \\
         $(4+2\A,\A)$ & $4+4\A$ \\
         $(2+2\A,2\A)$& $1+ \A$ \\
         $(3+4\A,2\A)$& $3\A$ \\
         $(4+\A,2\A)$& $2\A$ \\
         $(1+\A,3\A)$& $4\A$ \\
         $(2+4\A,3\A)$& $2+2\A$ \\
         $(3+2\A,3\A)$& $3+3 \A$ \\
         $(0,4\A)$& $2$ \\
         $(1+4\A,4\A)$& $3$ \\\hline
     \end{tabular}
\end{table}

\medskip
Classes III-b and III-d: $f_{1+2\alpha,2\alpha}$, $f_{-3,1}$.

\end{exmp}

\appendix 
\section{Even Characteristic}\label{app-a}

Our approach also works in characteristic 2. Classes I and II do not depend on the characteristic. We will see that equivalence classes in Class III are determined by the invariant $\theta$. We assume that $q$ is even throughout this section. 

\begin{prop}\label{prop:p=2}\
\begin{itemize}
\item[(i)]
$f_{0,t}$ belongs to Case II.

\medskip
\item[(ii)] For $s\ne 0$, $f_{s,t}$ belongs to Case II if and only if $\text{\rm Tr}_{q/2}(t/s^2)=0$ and $(s,t)\ne (1,1)$, where $\text{\rm Tr}_{q/2}$ is the trace from $\f_q$ to $\f_2$. 
\end{itemize}
\end{prop}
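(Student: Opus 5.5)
The plan is to reduce everything to Theorem~\ref{Theorem 2}, which holds in every characteristic (its proof is pure coefficient comparison). For $(s,t)\in\f_q^2$ with $t(1+s+t)\ne0$ we have $f_{s,t}^{-1}(\infty)=\{0,1,\infty\}$, so no point of $\Bbb P^1(\f_q)$ other than finite $\alpha$ can have exactly two preimages. Hence, by Theorem~\ref{Theorem 2}, $f_{s,t}$ lies in Class II if and only if $G_{s,t}$ has a root $x\in\f_q$ with $x^3\ne -t=t$. In characteristic $2$ the quartic collapses:
\[
G_{s,t}=X^4+sX^2+t=\bigl(X^2+s^{1/2}X+t^{1/2}\bigr)^2,
\]
where the square roots exist and are unique in $\f_q$ because squaring is a bijection. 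So the roots of $G_{s,t}$ in $\f_q$ are exactly the $\f_q$-roots of $Q(X)=X^2+s^{1/2}X+t^{1/2}$.

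\textbf{Part (i), $s=0$.} Here $G_{0,t}=(X+t^{1/4})^4$, so $x=t^{1/4}\in\f_q$ is a root. The condition $x^3=t$ means $t^{3/4}=t$, i.e.\ $t^{1/4}=1$, i.e.\ $t=1$. But $t=1$ gives $1+s+t=0$, which is excluded. So $x^3\ne t$, and Theorem~\ref{Theorem 2} puts $f_{0,t}$ in Class II.

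\textbf{Part (ii), $s\ne0$.} First, $Q$ has a root in $\f_q$ if and only if, after the substitution $X=s^{1/2}Y$, the polynomial $Y^2+Y+t^{1/2}/s$ has a root in $\f_q$. By the standard Artin--Schreier criterion this happens if and only if $\text{Tr}_{q/2}(t^{1/2}/s)=0$. Since the trace is invariant under Frobenius, this is equivalent to $\text{Tr}_{q/2}(t/s^2)=0$. It then remains to show that, when $Q$ has roots $x_1,x_2\in\f_q$, both satisfy $x_i^3=t$ exactly when $(s,t)=(1,1)$.

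\begin{itemize}
\item If $(s,t)=(1,1)$, then $Q=X^2+X+1$, whose roots are cube roots of unity. They satisfy $x^3=1=t$, so $f_{1,1}$ is not in Class II (whether or not these roots lie in $\f_q$).
\item Conversely, suppose $x_1^3=x_2^3=t$. Using $x_1x_2=t^{1/2}$ we get $(x_1x_2)^3=t^2=(x_1x_2)^4$, hence $x_1x_2=1$ (note $t\ne0$), so $t=1$. Then $x_1^3=1$ and $x_2=x_1^{-1}$. If $x_1=x_2$, then $x_1=1$ and $s^{1/2}=x_1+x_2=0$, a contradiction. So $x_1$ is a primitive cube root of unity and $s^{1/2}=x_1+x_1^2=1$, giving $(s,t)=(1,1)$.
\end{itemize}

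Combining these two facts with the root criterion gives (ii).

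There is no real obstacle here. The only points needing care are the trace criterion for quadratics in characteristic $2$ and checking that the excluded pair $(s,t)=(1,1)$ is exactly the degenerate case found above.
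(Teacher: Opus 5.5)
Your proof is correct and follows the paper's route. Both reduce to Theorem~\ref{Theorem 2}, use the trace criterion for $X^4+sX^2+t$ to have a root in $\f_q$, and single out $(s,t)=(1,1)$ as the only case in which every root satisfies $x^3=t$. The one difference is in the last step: the paper reduces $(X^3+t)^2$ modulo $X^4+sX^2+t$ to $(s^2+t)X^2+t(s+t)$, whereas you write $G_{s,t}=Q^2$ and apply Vieta's relations to the roots of $Q$. Your factorization also supplies the Artin--Schreier justification of the trace criterion, which the paper simply calls well known.
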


\begin{proof}
(i)
In this case, 
\[
X^4-2X^3-sX^2-2tX+t=X^4+t=(X+a)^4,
\]
where $a\in\f_q$ is such that $a^4=t$. By Theorem~\ref{Theorem 2}, it suffices to show that $a^3\ne t$. If $a^3=t$, then $t=1$, whence $1+s+t=0$, which is a contradiction.

\medskip
(ii).
It is well known that $X^4+sX^2+t$ has a root in $\f_q$ if and only if $\text{Tr}_{q/2}(t/s^2)=0$. Moreover, $X^4+sX^2+t$ has a root $x$ with $x^3\ne t$ if and only if $(X^3+t)^2\not\equiv0\pmod{X^4+sX^2+t}$. We have
\[
(X^3+t)^2\equiv(s^2+t)X^2+t(s+t)\pmod{X^4+sX^2+t}.
\]
It follows that $(X^3+t)^2\not\equiv0\pmod{X^4+sX^2+t}$ if and only if $(s,t)\ne (1,1)$. Now statement (ii) follows from Theorem~\ref{Theorem 2}.
\end{proof}

\begin{thm}\label{thm:p=2}
For each $b\in\f_q\setminus\f_2$, there exist $s,t\in\f_q$ with $t(1+s+t)\ne0$ and $\text{Tr}_{q/2}(t/s^2)=1$ such that
$\theta(s,t)=b$.
\end{thm}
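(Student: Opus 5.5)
The plan is to reduce the statement to finding a suitable auxiliary parameter $w\in\f_q$ with $\text{Tr}_{q/2}(w)=1$, and then to guarantee that such a $w$ exists by Weil's bound for additive character sums. Fix $b\in\f_q\setminus\f_2$.

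\emph{Reduction to a quadratic in $s$.} Write $t=s^2w$ with $s\ne 0$, so that the condition $\text{Tr}_{q/2}(t/s^2)=1$ becomes simply $\text{Tr}_{q/2}(w)=1$. In characteristic $2$,
\[
\theta(s,t)=\frac{s^3}{s^2w(1+s+s^2w)}=\frac{s}{w(1+s+s^2w)}.
\]
Hence $\theta(s,t)=b$ with $1+s+t\ne 0$ is equivalent to $s=bw(1+s+s^2w)$, that is,
\[
bw^2s^2+(bw+1)s+bw=0 .
\]
A root $s$ of this quadratic automatically satisfies $s\ne0$, because the constant term is $bw\ne0$. It also satisfies $1+s+t\ne0$, since $1+s+t=s/(bw)$. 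Finally $t=s^2w\ne0$.

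\emph{Solvability criterion.} Assume $bw\ne1$. The substitution $s=\frac{bw+1}{bw^2}\,y$ turns the quadratic into
\[
y^2+y+R(w)=0,\qquad R(w)=\frac{b^2w^3}{(bw+1)^2}.
\]
This has a solution $y\in\f_q$ if and only if $\text{Tr}_{q/2}(R(w))=0$. So it suffices to find $w\in\f_q$ with $w\ne 1/b$, $\text{Tr}_{q/2}(w)=1$ and $\text{Tr}_{q/2}(R(w))=0$.

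\emph{Counting the admissible $w$.} Let $\chi(x)=(-1)^{\text{Tr}_{q/2}(x)}$. The number of admissible $w$ equals
\[
\frac14\sum_{w\ne 1/b}\bigl(1-\chi(w)\bigr)\bigl(1+\chi(R(w))\bigr)=\frac14\Bigl(q-1-\sum\chi(w)+\sum\chi(R(w))-\sum\chi(R(w)+w)\Bigr).
\]
The sum $\sum\chi(w)$ over $w\ne1/b$ is bounded by $1$. For the other two sums I would apply the Weil bound for additive characters of rational functions, in the form $|\sum\chi(F)|\le(\text{const})\,q^{1/2}$ with a small constant. This bound applies provided $F$ is not of the form $h^2+h+c$ with $h\in\overline\f_q(X)$. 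To check that hypothesis I look at the poles.

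For $R$ itself, there is the identity $R=w+\frac{w}{b^2w^2+1}$, so $R$ has a simple pole at $\infty$. A pole of odd order cannot come from a function of the form $h^2+h+c$, so $R$ qualifies.

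For $R+w=\frac{w}{(bw+1)^2}$, the only pole is at $w=1/b$. Put $u=1/(bw+1)$; then
\[
R+w=\frac1b(u+u^2)\equiv\Bigl(\frac1b+\frac1{\sqrt b}\Bigr)u
\]
modulo functions of the form $h^2+h$. The right-hand side has a simple pole unless $b=\sqrt b$, i.e.\ unless $b\in\f_2$. This is exactly where the hypothesis $b\notin\f_2$ enters.

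\emph{Conclusion and main obstacle.} With these two nondegeneracy checks, the count of admissible $w$ is at least $\frac14(q-2-c\,q^{1/2})$ for an explicit small constant $c$. This is positive once $q$ exceeds an explicit bound, and the finitely many remaining $q$ would be verified by computer, as elsewhere in the paper. I expect the main obstacle to be the Artin--Schreier nondegeneracy of $R+w$ just carried out, together with pinning down the constant in the Weil bound.
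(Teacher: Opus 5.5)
Your proof is correct, and it takes a different route from the paper's main argument.

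The paper works directly on the cubic curve $E\colon S^3=bT(1+S+T)$. It uses $b\notin\f_2$ to show that $E$ is nonsingular. It then applies the Kohel--Shparlinski bound for additive character sums on an elliptic curve to the function $T/S^2$, which gives $N_1\ge q/2-3q^{1/2}$ and leaves $q\le 2^5$ to a computer check.

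Your substitution $t=s^2w$ is the same one behind the remark following the paper's proof. That remark instead writes $w=a^2+a+\epsilon$ and must prove absolute irreducibility of a plane curve. Your version turns both requirements into trace conditions on the single variable $w$. As a result you need only the classical Weil bound for rational functions on $\Bbb P^1$. The nondegeneracy check becomes a transparent Artin--Schreier pole computation, in which $b\notin\f_2$ appears as $\kappa=1/b+1/\sqrt b\ne 0$.

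Your computation actually gives more than you state:
\begin{itemize}
\item As $w$ runs over $\f_q\setminus\{1/b\}$, $u=1/(bw+1)$ runs over $\f_q^*$. Hence $\sum_{w\ne 1/b}\chi(R(w)+w)=\sum_{u\ne 0}\chi(\kappa u)=-1$ exactly.
\item The curve $y^2+y=R(w)$ has genus $1$, with reduced simple poles at $\infty$ and $1/b$. So $|\sum_{w\ne 1/b}\chi(R(w))|\le 2q^{1/2}$.
\item The number of admissible $w$ is therefore at least $(q-1-2q^{1/2})/4$, which is positive for $q\ge 8$.
\end{itemize}

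The value $w=1/b$ that you excluded always yields a solution, because there the quadratic becomes $s^2=b$. So whenever $\text{Tr}_{q/2}(1/b)=1$, the pair $(s,t)=(\sqrt b,1)$ is an explicit solution. For $q=4$ every $b\notin\f_2$ satisfies $\text{Tr}_{q/2}(1/b)=1$, so this covers the one remaining case and no computer check is needed at all.
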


The proof of Theorem~\ref{thm:p=2} uses the following result on exponential sums over elliptic curves:

\begin{thm}\cite[Corollary~1]{Kohel-Shparlinski-2000}\label{thm:ellip}
Let $E$ be an elliptic curve over $\f_q$, $\f_q(E)$ be its function field and $E(\f_q)$ be the set of rational points on $E$. Let $\chi$ be a nontrivial additive character of $\f_q$ and $f\in\f_q(E)$ with $\deg f>0$. Then 
\[
\Bigl|\sum_{\substack{P\in E(\f_q)\cr f(P)\ne\infty}}\chi(f(P))\Bigr|\le 2(\deg f)q^{1/2}.
\]
\end{thm}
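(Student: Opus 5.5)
The statement is the cited bound of Kohel and Shparlinski, so the plan is to derive it from the Weil conjectures for curves, namely the Riemann hypothesis for the $L$-function of an Artin--Schreier cover of $E$.

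\emph{Setup.} Write $q=p^m$. Every nontrivial additive character of $\f_q$ has the form $\chi(x)=\psi(\trace_{q/p}(ax))$ for some $a\in\f_q^*$, where $\psi$ is a fixed nontrivial character of $\f_p$. Replacing $f$ by $af$ changes neither $\deg f$ nor the sum, so we may take $a=1$.

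\emph{Reduction to the nondegenerate case.} For $g\in\overline\f_q(E)$ we have $\trace_{q/p}\bigl(g(P)^p-g(P)\bigr)=0$ at every rational $P$. Hence subtracting an Artin--Schreier term $g^p-g$ from $f$, with $g\in\f_q(E)$, does not change the sum. The correction must be made over $\f_q$: we reduce pole by pole using the standard local procedure, which kills the leading term of a pole whose order is divisible by $p$. Over $\f_q$ this works for rational poles and for Galois orbits of poles. Each such step strictly lowers $\deg f$, so we may assume every pole of $f$ has order prime to $p$. The degenerate case, in which $f$ reduces to a constant, is excluded by the hypothesis implicitly used in the cited corollary. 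I expect that checking this reduction really stays within the hypotheses (and that it only decreases $\deg f$, so the final bound is unaffected) is the fiddly part.

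\emph{Artin--Schreier cover and its genus.} Consider the cover $C:\ y^p-y=f$ of $E$. It is geometrically irreducible because $f\ne h^p-h+c$ over $\overline\f_q$. It is ramified exactly at the poles $Q$ of $f$, with conductor exponent $m_Q+1$, where $m_Q$ is the pole order. By Riemann--Hurwitz and $g_E=1$,
\[
2g_C-2=p(2g_E-2)+(p-1)\sum_Q(m_Q+1)\deg Q .
\]
Since $\sum_Q m_Q\deg Q=\deg f$ and $\sum_Q\deg Q\le\deg f$, this gives $2g_C\le 2+2(p-1)\deg f$.

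\emph{Character decomposition and the bound.} The $L$-function factors as
\[
L_C(T)=L_E(T)\prod_{\psi'\ne1}L(T,\psi'),
\]
where $L(T,\psi')$ is the $L$-function of the character $\psi'\circ\trace$ evaluated at $f$. Comparing degrees, the $p-1$ nontrivial factors have total degree $2g_C-2$. Their conjugate-invariance under $\mathrm{Gal}(\mathbb Q(\zeta_p)/\mathbb Q)$ forces each factor to have degree $D=(2g_C-2)/(p-1)\le 2\deg f$. The coefficient of $T$ in $L(T,\psi)$ is, up to sign, exactly $\sum_{P\in E(\f_q),\,f(P)\ne\infty}\chi(f(P))$, because the poles are the ramified places and they drop out of the Euler product. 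By the Riemann hypothesis for $C$, every reciprocal root of $L(T,\psi)$ has absolute value $q^{1/2}$. Hence the sum is bounded in absolute value by $Dq^{1/2}\le 2(\deg f)q^{1/2}$.

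The main obstacle is the bookkeeping in the genus and degree count: making sure the degree of $L(T,\psi)$ is at most $2\deg f$ and not $2\deg f+2$. This uses $g_E=1$, so that the Euler characteristic term $2g_E-2$ vanishes and the count reduces to $\sum_Q(m_Q+1)\deg Q\le 2\deg f$.
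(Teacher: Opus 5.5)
\textbf{Relation to the paper.} The paper does not prove this statement; it only quotes Kohel--Shparlinski. The bound is the genus-one case of Bombieri's estimate, and its standard proof is the Artin--Schreier/Riemann-hypothesis argument you sketch.

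\textbf{The nondegeneracy hypothesis is necessary.} Your caveat about a ``degenerate case'' is not a technicality: as literally stated, the theorem is false. Take $f=x^p-x$ with $x$ a Weierstrass coordinate and $\chi$ the canonical character. Then $\deg f=2p$ and every affine rational point contributes $1$. So the sum is $|E(\f_q)|-1\ge q-2q^{1/2}$, which exceeds $4pq^{1/2}$ as soon as $q^{1/2}>4p+2$. You should therefore assume explicitly that $af\ne h^p-h$ for every $h\in\overline{\f}_q(E)$. This is exactly geometric irreducibility of $y^p-y=af$.

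\textbf{The gap: the global pole-by-pole reduction inside $\f_q(E)$.} To remove the leading term at a rational pole $Q$ of order $pk$, you need $g\in\f_q(E)$ with a pole of order $k$ at $Q$ and no other poles. For $k=1$ this is impossible on a genus-one curve, because $\ell(Q)=1$. Any substitute $g$ has further poles, and $g^p-g$ then creates new poles of order $p$. So $\deg f$ need not drop, and your induction breaks.

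This already happens for the function the paper feeds into the theorem. Take $p=2$ and $f=Y/X^2$ on $X^3=bY(1+X+Y)$. Its only pole is a double pole at the rational point $(0,1)$. Moreover $f-(X^{-2}+X^{-1})=X/(bY)$ is regular there, so the cover $y^2+y=f$ is \emph{unramified} at its only pole. Hence both ``ramified exactly at the poles, with conductor exponent $m_Q+1$'' and ``the poles drop out of the Euler product'' fail in general.

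\textbf{How to repair it.} Reduce only locally, as in Stichtenoth, Prop.~3.7.8: choose $z$ by approximation and ignore its poles elsewhere. At each pole $Q$, either the cover is unramified, or it is ramified with reduced order $m_Q'\le m_Q$ and $p\nmid m_Q'$. Since $g_E=1$, this gives $\deg L(T,\psi)=\sum_{Q\ \mathrm{ram}}(m_Q'+1)\deg Q$.

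The coefficient of $T$ then equals the sum $S$ in the statement plus one term of absolute value $1$ for each rational pole at which the cover is unramified. Such a pole contributes nothing to $\deg L(T,\psi)$, but it contributes at least $2$ to $\sum_Q(m_Q+1)\deg Q$. So you still get $|S|\le q^{1/2}\sum_Q(m_Q+1)\deg Q\le 2(\deg f)q^{1/2}$. The rest of your argument (the genus count with $g_E=1$, equal degrees of the conjugate factors, and the Riemann hypothesis) is fine once this is done.
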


\begin{proof}[Proof of Theorem~\ref{thm:p=2}]
Let 
\[
g(X,Y)=X^3-bY(1+X+Y)\in\f_q[X,Y].
\]
We have
\[
\frac{\partial g}{\partial X}=X^2-bY,\quad
\frac{\partial g}{\partial Y}=b(1+X).
\] 
Since $b\ne 0,1$, it is easy to see that the system
\[
\begin{cases}
g(x,y)=0,\vspace{0.4em}\cr
\displaystyle\frac{\partial g}{\partial X}(x,y)=0,\vspace{0.4em}\cr
\displaystyle\frac{\partial g}{\partial Y}(x,y)=0
\end{cases}
\]
has no solution. Therefore, the curve defined by the degree 3 polynomial $g$ is nonsingular and hence is an elliptic curve over $\f_q$, which is denoted by $E$.

Let $N_0, N_1$ denote the number of points $(s,t)\in E(\f_q)$ with $\trace_{q/2}(t/s^2)=0,1$, respectively. Then $N_0+N_1+1=|E(\f_q)|$, where $1$ counts for the point of $E$ at infinity. By the Hasse-Weil bound, $|E(\f_q)|\ge q+1-2q^{1/2}$. Let $f=Y/X^2\in\f_q(E)$ and let $\chi(z)=(-1)^{\trace_{q/2}(z)}$ be the canonical additive character of $\f_q$. Set
\[
S=\sum_{\substack{P\in E(\f_q)\cr f(P)\ne\infty}}\chi(f(P))=N_0-N_1.
\]
By Theorem~\ref{thm:ellip}, $|S|\le 4q^{1/2}$. Hence we have
\[
N_1=\frac 12(|E(\f_q)|-1-S)\ge\frac 12(q-2q^{1/2}-4q^{1/2})=\frac 12q-3q^{1/2},
\]
which is positive when $q\ge 2^6$. For $q\le 2^5$, we also have $N_1>0$ via computer verification.
\end{proof}

\begin{rmk}\rm
There is a slightly different proof of Theorem~\ref{thm:p=2} without using Theorem~\ref{thm:ellip}. Choose $\epsilon\in\f_q$ such that $\trace_{q/2}(\epsilon)=1$, and show that the equation $s^3/t(1+s+t)=b$ has a solution $(s,t)\in\f_q^2$ with $t=s^2(a^2+a+\epsilon)$ for some $a\in\f_q$. The equation $\theta(s,t)=b$ is equivalent to 
$F(a,s(a^2+a+\epsilon))=0$,
where
\[
F(X,Y)=(X^2+X+\epsilon)Y^2+(X^2+X+\epsilon+b^{-1})Y+(X^2+X+\epsilon)^2\in\f_q[X,Y].
\]
One can show that $F(x,Y)$ is absolutely irreducible, and then use the Hasse-Weil bound to conclude that $F$ has a zero $(x,y)\in\f_q^2$.
\end{rmk}

\begin{rmk}\rm
The proofs of Theorem~\ref{thm:p=2} given here, whether using Theorem~\ref{thm:ellip} or not, are non-constructive. We do not know if one can explicitly construct some solution $(s,t)$ in Theorem~\ref{thm:p=2}.
\end{rmk}

\begin{thm}\label{thm:theta-onto}
For each $b\in\f_q^*$, there exists $(s,t)\in\f_q^2$ such that $f_{s,t}$ is in Class III and $\theta(s,t)=b$.
\end{thm}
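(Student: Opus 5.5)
The proof splits according to whether $b\in\f_2$ or not, since Theorem~\ref{thm:p=2} already handles $b\in\f_q\setminus\f_2$. Throughout I use the following characterization of Class III for $f_{s,t}$ with $t(1+s+t)\ne 0$, which I would state and check at the start of the proof.

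\emph{Characterization.} The fibre $f_{s,t}^{-1}(\infty)=\{0,1,\infty\}$ has three elements, so $f_{s,t}$ does not permute $\Bbb P^1(\f_q)$ and is not in Class I. Hence $f_{s,t}$ is in Class III exactly when it is not in Class II. By Proposition~\ref{prop:p=2}, this happens precisely when $s\ne 0$ and either $\trace_{q/2}(t/s^2)=1$ or $(s,t)=(1,1)$. Part (i) of that proposition excludes $s=0$. By part (ii), for $s\ne0$ we have $f_{s,t}$ in Class II iff $\trace_{q/2}(t/s^2)=0$ and $(s,t)\ne(1,1)$; the negation is the stated condition.

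\emph{Case $b\in\f_q\setminus\f_2$.} Theorem~\ref{thm:p=2} gives $(s,t)\in\f_q^2$ with $t(1+s+t)\ne 0$, $\trace_{q/2}(t/s^2)=1$ and $\theta(s,t)=b$. Since $b\ne 0$, the relation $s^3=b\,t(1+s+t)\ne0$ forces $s\ne 0$, so the trace expression is well defined. The characterization then puts $f_{s,t}$ in Class III. No further work is needed here.

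\emph{Case $b=1$.} Take $(s,t)=(1,1)$. In characteristic $2$ we have $1+s+t=1$, so $t(1+s+t)=1\ne0$ and
\[
\theta(1,1)=\frac{1^3}{1\cdot 1}=1.
\]
Since $s\ne0$ and $(s,t)=(1,1)$ is exactly the exceptional pair in Proposition~\ref{prop:p=2}(ii), the function $f_{1,1}$ is not in Class II. By the characterization it lies in Class III.

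The only real content is already in Theorem~\ref{thm:p=2}, so there is no serious obstacle here. The one step needing care is the bookkeeping that $s\ne0$ and that Classes I and II are both excluded, so that $f_{s,t}$ genuinely lands in Class III.
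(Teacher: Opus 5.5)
Your proof is correct and follows the paper's argument: for $b\ne 1$ you combine Theorem~\ref{thm:p=2} with Proposition~\ref{prop:p=2}(ii), and for $b=1$ you take $f_{1,1}$, the exceptional pair in Proposition~\ref{prop:p=2}(ii). Your extra bookkeeping is a correct elaboration of steps the paper leaves implicit: $|f_{s,t}^{-1}(\infty)|=3$ rules out Class I, so Class III is the complement of Class II, and $\theta(s,t)=b\ne 0$ forces $s\ne 0$.
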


\begin{proof}
If $b\ne 1$, the conclusion follows from Theorem~\ref{thm:p=2} and Proposition~\ref{prop:p=2} (ii). If $b=1$, we have $\theta(1,1)=1$, where $f_{1,1}$ is in Class III by Proposition~\ref{prop:p=2} (ii).
\end{proof}

\begin{cor}\label{cor:p=2}
The equivalence classes in Class III are determined by the invariant $\theta$.
\end{cor}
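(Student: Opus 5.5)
The corollary will follow by counting, in the same spirit as Section~\ref{sec6.8}: exhibit at least $\frak N_{\rm III}(q,3)$ values of $\theta$ on Class III. Since $\theta$ is an equivalence invariant, counting then forces $\theta$ to separate the classes exactly.

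First I determine $\frak N_{\rm III}(q,3)$ for even $q$. From \eqref{N3}, $\frak N(q,3)=2(q+1)$ when $q\equiv 4\pmod 6$, i.e.\ when $q$ is an even square, and $\frak N(q,3)=2q$ when $q\equiv 2\pmod 6$, i.e.\ when $q$ is an even nonsquare. From \eqref{n1}, $\frak N_{\rm I}(q,3)=1$. By Theorem~\ref{Theorem 1}, which is valid in characteristic $2$ (its proof treats $p=2$), $\frak N_{\rm II}(q,3)=|\mathcal C|+(q-1)$, where $|\mathcal C|=3$ if $q\equiv1\pmod 3$ and $|\mathcal C|=1$ otherwise. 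For even $q$ we have $q\equiv 1\pmod 3$ exactly when $q$ is a square. Subtracting gives
\[
\frak N_{\rm III}(q,3)=2(q+1)-1-(q+2)=q-1 \quad\text{or}\quad 2q-1-q=q-1 .
\]
So in either case $\frak N_{\rm III}(q,3)=q-1$, consistent with \eqref{n3}.

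Next I note the facts about $\theta$ that I need. Every function in Class III is equivalent to some $f_{s,t}$ with $t(1+s+t)\ne0$. Its invariant is $\theta(s,t)=s^3/(t(1+s+t))$ by \eqref{theta-fst}. This is defined even for $(s,t)=(-3,1)=(1,1)$, where \eqref{eq:theta} extends it and gives value $1$. In characteristic $2$, Proposition~\ref{prop:p=2}(i) shows that $s=0$ lands in Class II, so $\theta\ne 0$ on Class III. Hence $\theta$ maps the set of Class III equivalence classes into $\f_q^*$.

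Finally I compare sizes. By Theorem~\ref{thm:theta-onto}, this map is onto $\f_q^*$, so there are at least $q-1$ classes with distinct $\theta$ values. Since there are exactly $q-1$ classes, the map is a bijection. Therefore two functions in Class III are equivalent if and only if they have the same $\theta$.

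The only delicate point is justifying that $\theta$ is a genuine invariant at the special pair $(1,1)$, where $f_{1,1}$ has no ramification point of index $2$. Corollary~\ref{cor:(-3,1)} handles this: any function equivalent to $f_{1,1}$ is itself $f_{1,1}$ up to the normal form, so assigning it the value $1$ causes no conflict.
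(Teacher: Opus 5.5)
Your proposal is correct and follows the paper's own argument: $\theta\ne 0$ on Class III by Proposition~\ref{prop:p=2}(i), $\theta$ takes every value in $\f_q^*$ by Theorem~\ref{thm:theta-onto}, and Class III has exactly $q-1$ equivalence classes, so $\theta$ must be injective on classes. The additional details you supply are accurate and make explicit what the paper takes from \eqref{n3} and from the discussion of $\theta$ in Section~\ref{sec6}: you recheck that the class count is $q-1$ for even $q$, and you use Corollary~\ref{cor:(-3,1)} to justify invariance at $(s,t)=(1,1)$.
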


\begin{proof}
By Theorem~\ref{thm:theta-onto}, the invariant $\theta$ of functions in Class III takes all values in $\f_q^*$. (note that $\theta\ne 0$ by Proposition~\ref{prop:p=2} (i).) On the other hand, we know that there are precisely $q-1$ equivalence classes in Class III. Hence the conclusion.
\end{proof}

Table\ref{tb4} provides a comparison between the classifications in \cite{Mattarei-Pizzato-JAA-2024} and the current paper.
In Table~\ref{tb4}, $\sigma\in\f_q$ is a chosen element such that $\trace_{q/2}(\sigma)=1$,  $\mathcal C$ is a transversal of $(\f_q^*)^{(3)}$ in $\f_q^*$, and $g=(X^3+a_2X^2+a_1X)/(X^2+X+b_0)$, where $a_1,a_2,b_1\in\f_q$ are chosen elements such that $\trace_{q/2}(b_0)=1$, $a_1=b_0+b_0^{-1}$, $a_2=1+b_0^{-1}$.

\begin{table}[ht]
\caption{
A comparison between the classifications in \cite{Mattarei-Pizzato-JAA-2024} and the current paper for even $q$}\label{tb4}
   \renewcommand*{\arraystretch}{1.5}
    \vskip-1em
    \[\kern-5.5em
    {\small
     \begin{tabular}{c|l|c|c||c|l|l} \hline
     $\begin{array}{cc}\text{\cite{Mattarei-Pizzato-JAA-2024}}\vspace{-0.6em}\cr \text{cls} \end{array}$ &\hfil representative\hfil & ram type & $\theta$ & $\begin{array}{cc}\text{our}\vspace{-0.6em}\cr \text{cls}\end{array}$ &\hfil representative\hfil & condition \\ \hline
     (i) & $X^3$ & $(3/1,3/1)$ & 1 & I & $X^3$ & $q$ nonsquare \\
     & & & & III & $f_{1,1}$ & $q$ square \\ \hline
     (ii) & $\displaystyle\frac{X^3+\sigma X+\sigma}{X^2+X+\sigma+1}$ & $(3/2,3/2)$ &1& III & $f_{1,1}$ & $q$ nonsquare \\
     &  & & & I & $g$ & $q$ square\\ \hline
     (iii) & $X^3+X^2$ & $(3/1,3/1)$ & 1 & II & $\displaystyle\frac{X^3+X^2+1}X$ & \\ \hline
     (iv) & $\displaystyle\frac{X^3+t}X,\ t\in\mathcal C$ & $(2/1)$ &0 & II &  $\displaystyle\frac{X^3+t}X,\ t\in\mathcal C$\\ \hline
     (v) &  $\displaystyle\frac{X^3+c}{X+c},\ c\in\f_q\setminus\f_2$ & $(2/1,2/1)$ & $(1+c)^2$ & II & $\displaystyle\frac{X^3+X^2+t}X$ \\
     &  &&&& $t=(1+c)^{-2}\in\f_q\setminus\f_2$ \\ \hline
     (vi) & $\displaystyle\frac{X^3+bX^2+\sigma X+(b+1)\sigma}{X^2+X+b+1+\sigma}$ & $(2/2,2/2)$ & $(b+1)^{-4}$ &III & $f_{s,t}$\\
     & $b\in\f_q\setminus\f_2$ &&&& $\trace_{q/2}(t/s^2)=1$\\
     &&&&& $\theta(f_{s,t})=(b+1)^{-4}\in\f_q\setminus\f_2$ \\ \hline     
      \end{tabular}
     }
     \]
\end{table}

\section{Mathematica Codes}\label{app-b}

\subsection{Proof of Theorem~\ref{C-tha1}: verification of \eqref{pc}}

\begin{mmaCell}{Code}

Clear[alpha, c, K, Pc, B2, B3, B4, B5, B6];
alpha = 8 - 15 c - a c + 6 c^2 + c^3;
K = 8 x^6 - 12 x^7 + 6 x^8 - x^9 - 24 x^4 y + 42 x^6 y - 30 x^7 y + 
   6 x^8 y - 30 x^2 y^2 - 2 a x^2 y^2 + 171 x^3 y^2 + 5 a x^3 y^2 - 
   180 x^4 y^2 - 4 a x^4 y^2 + 15 x^5 y^2 + a x^5 y^2 + 42 x^6 y^2 - 
   12 x^7 y^2 - 8 y^3 + 84 x y^3 + 4 a x y^3 - 330 x^2 y^3 - 
   14 a x^2 y^3 + 442 x^3 y^3 + 14 a x^3 y^3 - 180 x^4 y^3 - 
   4 a x^4 y^3 + 8 x^6 y^3 - 24 y^4 + 168 x y^4 + 8 a x y^4 - 
   330 x^2 y^4 - 14 a x^2 y^4 + 171 x^3 y^4 + 5 a x^3 y^4 - 
   24 x^4 y^4 - 24 y^5 + 84 x y^5 + 4 a x y^5 - 30 x^2 y^5 - 
   2 a x^2 y^5 - 8 y^6;
Pc = c y + 2 x^2 - (4 + c) x y + c y^2 - x^2 (x - 2 y);
B2 = -(8/c) y^2;
B3 = -(2/c) y (c (c + 6) x^2 - 2 (4 + 6 c + c^2 ) x y + 8 y^2);
B4 = (1/c) (4 c x^4 + c (2 + 3 c) x^3 y - 8 (1 + 4 c + c^2) x^2 y^2 + 
     4 (4 + 6 c + c^2 ) x y^3 - 8 y^4);
B5 = -x^2 (x - 2 y) (4 x^2 + (c - 2) x y - (c + 6) y^2);
B6 = x^4 (x - 2 y)^2;
K - Pc (B2 + B3 + B4 + B5 + B6);
PolynomialMod[c*

Out[1]= 0

\end{mmaCell}

\subsection{Proof of Theorem~\ref{thm:mu2}: computation and factorization of $N$}

\begin{mmaCell}{Code}

Clear[M, a1, a0, a11, a00, a, b, c, d, K];
M[a0_, a1_] := 
  Simplify[((a1^2 - 4 a0) (2 + 2 a0 + a1))/(a1 (2 + a1) (2 a0 + 
        a1))];
a00 = (b^2 + d^2 a0 + b d a1)/(a^2 + c^2 a0 + a c a1);
a11 = (2 a b + 2 c d a0 + (b c + a d) a1)/(a^2 + c^2 a0 + a c a1);
Factor[M[a0, a1] - M[a00, a11]];
NN = Numerator[
C0 = 2 b + 2 b^2 - 4 b c - 2 a1 b c - 2 b^2 c - a1 b^2 c + 2 b c^2 + 
   a1 b c^2 - 2 d - a1 d - 4 b d - 2 a1 b d + 4 c d + 2 a1 c d + 
   4 b c d + 2 a1 b c d + 4 a1 c^2 d + 4 a1^2 c^2 d + a1^3 c^2 d + 
   2 d^2 + a1 d^2 + 4 a1 c d^2 + 4 a1^2 c d^2 + a1^3 c d^2;
C1 = 2 b + 2 b^2 - 4 b c - 2 a1 b c - 2 b^2 c - a1 b^2 c + 2 b c^2 + 
   a1 b c^2 - 2 d - a1 d - 4 b d - 2 a1 b d + 4 c d + 2 a1 c d + 
   4 b c d + 2 a1 b c d - 2 c^2 d + a1 c^2 d + 3 a1^2 c^2 d + 
   a1^3 c^2 d + 2 d^2 + a1 d^2 - 2 c d^2 + a1 c d^2 + 3 a1^2 c d^2 + 
   a1^3 c d^2;
Factor[NN /. {a -> 1, a0 -> -a1 C0/(2 C1)}]

\end{mmaCell}


\section*{Acknowledgments}

This work was partially supported by NSF REU Grant 2244488 and NSF RTG grant 2342254.




\end{document}